\documentclass{amsart}
\usepackage{amsthm,amssymb,amsmath,dsfont,bigdelim}
\usepackage{graphicx}
\usepackage{float}
\usepackage[arc,all]{xy}
\usepackage{longtable}
\setlength{\abovedisplayskip}{2pt}
\setlength{\belowdisplayskip}{2pt}
\usepackage[margin=0pt,font=small,labelfont={normalsize},labelsep=space]{caption}
\vfuzz2pt 
\hfuzz2pt 
\newtheorem{thm}{Theorem}[section]

\newtheorem{lem}[thm]{Lemma}
\newtheorem{prop}[thm]{Proposition}
\theoremstyle{definition}

\newtheorem*{proof M}{\textbf{Proof of  Main Theorem}}
\theoremstyle{remark}

\numberwithin{equation}{section}

\begin{document}
\title[ ]{On characterizing nilpotent Lie algebra by their multiplier, $ s(L)=6, 7 $}%
\author{afsaneh shamsaki}%
\address{School of Mathematics and Computer Science\\
Damghan University, Damghan, Iran}
\email{Shamsaki.afsaneh@yahoo.com}%
\author[P. Niroomand]{Peyman Niroomand}\thanks{Corresponding Author: Peyman Niroomand}
\email{niroomand@du.ac.ir, p$\_$niroomand@yahoo.com}
\address{School of Mathematics and Computer Science\\
Damghan University, Damghan, Iran}
\keywords{Schur multiplier; nilpotent Lie algebra, capable Lie algebra, triple tensor product}%
\subjclass{17B30, 17B05, 17B99}

\begin{abstract}
Let $ L $ be an $ n $-dimensional non-abelian nilpotent Lie algebra and $ s(L)=\frac{1}{2}(n-1)(n-2)+1-\dim \mathcal{M}(L) $ where $ \mathcal{M}(L) $ is the Schur multiplier of a Lie algebra   $ L. $ The structures of nilpotent Lie algebras $ L $ when $ s(L)\in \lbrace 0,1,2,3,4,5\rbrace $ are determined. In this paper, we classify all non-abelian nilpotent Lie algebras $ L $ when $ s(L)=6,7. $ 
\end{abstract}
\maketitle
\section{ Introduction and preliminaries}
Let $L$ be a Lie algebra on a fixed field $\mathbb{F}$ and  $L\cong F/R$  for a free Lie algebra. Then the Schur multiplier $ \mathcal{M}(L) $ of  $ L $ is isomorphic to $ R\cap F^{2}/[R, F] $. Classification of finite dimensional nilpotent Lie algebras according to the dimension of its Schur multiplier has been a line of investigation  by many authors in \cite{B1, B2,  M}. One of the main themes of the Lie theoretical part of research on the Schur multipliers of Lie algebras has been to determine all nilpotent Lie algebras according to their Schur multipliers. It is know that from \cite[Lemma 2.2]{M} that for a nilpotent Lie algebra $ L $, we have   $\dim \mathcal{M}(L)= 1/2n(n-1)-t(L)$ for an integer $t(L)\geq 0$. All  finite dimensional nilpotent Lie algebras $L$ with   $t(L) = 0, 1,\dots,7,8$  have been classified by several papers in \cite{B1,H1,H2}.  The second author, as a joint paper,  showed that the dimension of Schur multiplier of a non-abelian nilpotent Lie algebra is equal to $1/2(n-1)(n-2)+1-s(L)$ for an integer $s(L)\geq 0$. Similarly, we may classify  nilpotent Lie algebras $L$ rely on $s(L)$. 
The classification nilpotent Lie algebras in term of $s(L)=0,\ldots ,5$ are given in \cite{N, s1, s3, s4, s5}. This classification  simplify the problem of determining a Lie algebra in terms of $t(L)$ and recently are used to answer the same question for an $n$-Lie super algebra and Leibniz $ n $-algebras   in \cite{sa1, sa2}.
  In this paper, we are going to give a  classification of all nilpotent Lie algebras $ L $  for $t(L) =6, 7$.\\
The following theorems and lemmas are useful in the next sections.
\begin{thm} \cite[Theorem 3.1]{N}\label{th1.1}
Let $L$ be an $n$-dimensional nilpotent Lie algebra and $\dim L^{2}=m\geq 1$. Then
\begin{align*}
\dim\mathcal{M}(L)\leq \dfrac{1}{2}(n+m-2)(n-m-1)+1.
\end{align*}
Moreover, if $m=1$, then the equality holds if and only if $L\cong H(1)\oplus A(n-3)$.
\end{thm}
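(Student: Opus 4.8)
The plan is to prove the inequality by induction on $m=\dim L^{2}$, isolating the case $m=1$ as a separate (and genuinely harder) base case. It is convenient first to rewrite the target bound in the form
\[
\tfrac{1}{2}(n+m-2)(n-m-1)+1=\binom{n-m}{2}+(m-1)(n-m-1)+1,
\]
and to set $d=\dim(L/L^{2})=n-m$, so that $L/L^{2}$ is the $d$-dimensional abelian Lie algebra and $\binom{d}{2}=\dim\mathcal{M}(L/L^{2})$. In this notation the claim is $\dim\mathcal{M}(L)\leq\binom{d}{2}+(m-1)(d-1)+1$, and the quantity $d$ is invariant under quotienting by a central ideal contained in $L^{2}$, which is what makes the induction propagate cleanly.

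For the inductive step ($m\geq 2$) I would use that $L$ is nilpotent with $L^{2}\neq 0$, so the last nonzero term of the lower central series lies in $Z(L)\cap L^{2}$; pick a one-dimensional central ideal $N\subseteq Z(L)\cap L^{2}$. The five-term exact sequence in Lie algebra homology, together with the fact that the kernel of the natural map $\mathcal{M}(L)\to\mathcal{M}(L/N)$ is a homomorphic image of $N\otimes(L/L^{2})$, yields
\[
\dim\mathcal{M}(L)\leq \dim\mathcal{M}(L/N)+\dim\bigl(N\otimes(L/L^{2})\bigr)-\dim(N\cap L^{2}).
\]
Since $N\subseteq L^{2}$ is one-dimensional this reduces to $\dim\mathcal{M}(L)\leq \dim\mathcal{M}(L/N)+(n-m)-1$. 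Now $L/N$ has dimension $n-1$ and derived subalgebra $L^{2}/N$ of dimension $m-1\geq 1$, while its abelianization is again $d$-dimensional; applying the induction hypothesis to $L/N$ and simplifying, the resulting estimate coincides with the target, the discrepancy being $(d-1)+1-d=0$. Thus the bound propagates exactly down to $m=1$.

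The base case $m=1$ carries the real content, and here I would argue structurally rather than through the exact sequence. When $\dim L^{2}=1$ the algebra has class two with $L^{2}\subseteq Z(L)$, and the alternating form induced by the bracket on $L/L^{2}$ forces $L\cong H(k)\oplus A(n-2k-1)$ for some $k\geq 1$, where $H(k)$ is the $(2k+1)$-dimensional Heisenberg algebra and $2k$ is the rank of the form. Using the direct-sum formula $\mathcal{M}(A\oplus B)\cong \mathcal{M}(A)\oplus\mathcal{M}(B)\oplus\bigl((A/A^{2})\otimes(B/B^{2})\bigr)$ together with the known values $\dim\mathcal{M}(H(1))=2$ and $\dim\mathcal{M}(H(k))=2k^{2}-k-1$ for $k\geq 2$, I would express $\dim\mathcal{M}(H(k)\oplus A(n-2k-1))$ as a function of $k$ and show it is strictly maximized at $k=1$, where its value is exactly $\frac{1}{2}(n-1)(n-2)+1$. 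This simultaneously establishes the inequality for $m=1$ and pins down $H(1)\oplus A(n-3)$ as the unique equality case.

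The main obstacle is precisely this base case: the crude estimate coming from the exact sequence is too lossy when $m=1$ (it degrades the bound by $n-3$), so the inductive reduction genuinely bottoms out at an argument that must invoke the explicit classification of algebras with one-dimensional derived subalgebra and the exact dimensions of Heisenberg multipliers. The remaining delicate point is verifying strictness of the comparison for $k\geq 2$, namely that $2k^{2}-k-1+\binom{n-2k-1}{2}+2k(n-2k-1)<\frac{1}{2}(n-1)(n-2)+1$; this is a routine but essential polynomial inequality in $n$ and $k$ that guarantees uniqueness of the extremal algebra.
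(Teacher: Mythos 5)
Your proposal is correct and follows essentially the same route as the cited source of this result (\cite[Theorem 3.1]{N}, with the $m=1$ case resting on \cite{s1}): induction on $m=\dim L^{2}$ by factoring out a one-dimensional central ideal contained in $L^{2}\cap Z(L)$ and applying the Ganea-type bound of Theorem \ref{th1.1}'s companion inequality (Theorem \ref{th1.2}), with the base case $m=1$ settled via the classification $L\cong H(k)\oplus A(n-2k-1)$ of Lemma \ref{l1.2}, the direct-sum formula for multipliers, and the known values $\dim\mathcal{M}(H(1))=2$ and $\dim\mathcal{M}(H(k))=2k^{2}-k-1$ for $k\geq 2$ (your strictness claim checks out: the deficit is exactly $2$ for every $k\geq 2$). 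The only caveat is that the present paper quotes this theorem without proof, so the comparison is with the argument in the cited reference rather than with a proof appearing here.
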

For a Lie algebra $ X, $ let $X^{ab}$ be used to denote $X/X^2.$ 
\begin{thm}\cite[Corollary 2.3]{N}\label{th1.2}
Let $L$ be a finite dimensional Lie algebra and $K$ be a central ideal of $L$. Then
\begin{align*}
\dim\mathcal{M}(L)+\dim(L^{2}\cap K) \leq \dim\mathcal{M}(L/K)+\dim\mathcal{M}(K)+\dim ((L/K)^{ab}\otimes K) .
\end{align*}
\end{thm}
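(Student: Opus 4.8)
The plan is to feed the central extension $0\to K\to L\to L/K\to 0$ into the Lyndon--Hochschild--Serre spectral sequence for Lie algebra homology with trivial coefficients and to read off the inequality from the induced filtration of $\mathcal{M}(L)=H_2(L)$. Since $K$ is a central ideal we have $[K,L]=0$; in particular $K$ is abelian and $L/K$ acts trivially on $K$ and hence on $H_\ast(K)$. Working over the fixed field $\mathbb{F}$ there are no Tor corrections, so the second page collapses to $E^2_{p,q}=H_p(L/K)\otimes H_q(K)$. I would record the three terms on the antidiagonal $p+q=2$: $E^2_{2,0}=H_2(L/K)=\mathcal{M}(L/K)$, $E^2_{1,1}=(L/K)^{ab}\otimes K$ (using $H_1(X)=X^{ab}$ and the trivial action), and $E^2_{0,2}=H_2(K)=\mathcal{M}(K)$.

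Next I would use that the spectral sequence equips $\mathcal{M}(L)=H_2(L)$ with a finite filtration whose successive quotients are exactly $E^\infty_{2,0}$, $E^\infty_{1,1}$, $E^\infty_{0,2}$, and that each $E^\infty_{p,q}$ is a subquotient of the corresponding $E^2_{p,q}$. Passing to dimensions gives
\[
\dim\mathcal{M}(L)=\dim E^\infty_{2,0}+\dim E^\infty_{1,1}+\dim E^\infty_{0,2}\le \dim E^\infty_{2,0}+\dim\big((L/K)^{ab}\otimes K\big)+\dim\mathcal{M}(K).
\]
Thus two of the three right-hand terms of the claim appear automatically as crude bounds for the bottom two layers of the filtration, and it remains only to control the top layer $E^\infty_{2,0}$.

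For that top layer I would invoke the five-term exact sequence attached to the extension,
\[
\mathcal{M}(L)\longrightarrow\mathcal{M}(L/K)\xrightarrow{\,d_2\,} K \longrightarrow L^{ab}\longrightarrow(L/K)^{ab}\longrightarrow 0 ,
\]
and observe that the map $K\to L^{ab}$ has kernel $L^2\cap K$, so the transgression $d_2$ has image $L^2\cap K$. Since $E^\infty_{2,0}=\ker d_2=\mathrm{im}\big(\mathcal{M}(L)\to\mathcal{M}(L/K)\big)$, this yields $\dim E^\infty_{2,0}=\dim\mathcal{M}(L/K)-\dim(L^2\cap K)$. Substituting into the displayed inequality and transposing $\dim(L^2\cap K)$ to the left gives exactly $\dim\mathcal{M}(L)+\dim(L^2\cap K)\le\dim\mathcal{M}(L/K)+\dim\mathcal{M}(K)+\dim((L/K)^{ab}\otimes K)$.

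The main obstacle I anticipate is purely the spectral-sequence bookkeeping: confirming that centrality forces a trivial $L/K$-action so that the $E^2$-terms split as tensor products, and pinning down the transgression $d_2$ precisely enough to identify its image with $L^2\cap K$ (the computation underlying the classical five-term sequence). A spectral-sequence-free alternative would be to quote the Ganea-type exact sequence
\[
(L/K)^{ab}\otimes K \longrightarrow \mathcal{M}(L)\longrightarrow\mathcal{M}(L/K)\longrightarrow L^2\cap K\longrightarrow 0 ,
\]
which already gives the inequality with $\dim\mathcal{M}(K)$ deleted, so the stated form follows from $\dim\mathcal{M}(K)\ge 0$. I would nevertheless present the spectral-sequence argument as primary, since it produces all three right-hand terms on an equal footing.
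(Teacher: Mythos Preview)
Your argument is correct. The paper does not give its own proof of this statement; it is quoted from \cite{N} (Niroomand--Russo) as a preliminary tool, so there is nothing in the present paper to compare against line by line.

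As for the approach: your Lyndon--Hochschild--Serre argument is a clean and conceptually transparent route. The proof in the original source proceeds instead via free presentations and the Hopf-type description $\mathcal{M}(L)\cong (R\cap F^2)/[R,F]$, assembling the inequality from an explicit exact sequence relating $\mathcal{M}(L)$, $\mathcal{M}(L/K)$ and the tensor term (essentially your ``Ganea-type'' alternative). What your spectral-sequence route buys is that all three right-hand summands appear on the $E^2$-page simultaneously and on equal footing, and the precise defect $\dim(L^2\cap K)$ drops out of the transgression computation; what the presentation approach buys is that it avoids any spectral-sequence machinery and works directly with the defining free Lie algebra. Your identification $E^\infty_{2,0}=\ker d_2$ and $\operatorname{im}d_2=L^2\cap K$ via the five-term sequence is the only point requiring care, and you have handled it correctly. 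The remark that the Ganea sequence alone already yields the inequality (since $\dim\mathcal{M}(K)\ge 0$) is a worthwhile observation and in fact closer in spirit to how the result is typically used in this paper, where $K$ is always one-dimensional and hence $\mathcal{M}(K)=0$.
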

For a Lie algebra $ L $, the concept epicenter $  Z^{*}(L) $ is introduced in \cite{sal}. The importance of  $  Z^{*}(L) $ is due to notion of  the fact that $ L $ is capable if and only if $  Z^{*}(L)=0 $.  Another notion having relation  to the capability is the concept of the exterior center of a Lie algebra $  Z^{\wedge}(L) $ which is introduced in \cite{N1}.  It is known that from   \cite[Lemma 3.1]{N1}, $ Z^{*}(L)=Z^{\wedge}(L) $.
\begin{lem}\cite[Corollary 2.3]{s4}\label{l1.1}
    Let $L$ be a  non-capable nilpotent  Lie algebra of dimension $n$ such that $ \dim L^{2}\geq 2 $. Then
\begin{equation*}
n-3<s(L).
\end{equation*}
\end{lem}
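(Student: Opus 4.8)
The plan is to translate the claim into an upper bound on the multiplier and then extract extra room from non-capability by descending to a one-dimensional quotient. Writing $m=\dim L^{2}$, the inequality $n-3<s(L)$ is, by the definition of $s(L)$, equivalent to $\dim\mathcal{M}(L)<\tfrac12(n-1)(n-2)+4-n$. Since Theorem~\ref{th1.1} only gives $\dim\mathcal{M}(L)\le\tfrac12(n+m-2)(n-m-1)+1$, which for $m=2$ already equals $\tfrac12(n-1)(n-2)$, the bare bound is too weak once $n\ge5$; the missing $\,4-n\,$ of room must be supplied by the hypothesis that $L$ is non-capable.

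First I would use the identity $Z^{*}(L)=Z^{\wedge}(L)$ together with non-capability to fix a nonzero $z\in Z^{*}(L)$ and set $K=\langle z\rangle$, a one-dimensional central ideal contained in the epicenter. The crucial point is the behaviour of the inflation map $\mathcal{M}(L)\to\mathcal{M}(L/K)$. In the Ganea-type exact sequence $(L/K)^{ab}\otimes K\xrightarrow{\mu}\mathcal{M}(L)\to\mathcal{M}(L/K)\to L^{2}\cap K\to 0$ the map $\mu$ sends $\bar l\otimes k$ to $l\wedge k$ inside $\mathcal{M}(L)\subseteq L\wedge L$, and $K\subseteq Z^{\wedge}(L)$ says precisely that $k\wedge l=0$, hence $l\wedge k=0$, for all $l\in L$ and $k\in K$. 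Thus $\mu=0$, the inflation map is injective, and the exact sequence collapses to the sharp equality $\dim\mathcal{M}(L)=\dim\mathcal{M}(L/K)-\dim(L^{2}\cap K)$, which is the key gain over the mere inequality of Theorem~\ref{th1.2}.

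Next I would bound $\dim\mathcal{M}(L/K)$ by applying Theorem~\ref{th1.1} to the $(n-1)$-dimensional algebra $L/K$, splitting according to the position of $K$. If $K\subseteq L^{2}$ then $L^{2}\cap K=K$ and $\dim(L/K)^{2}=m-1\ge1$ (here the hypothesis $\dim L^{2}\ge2$ is exactly what keeps $L/K$ non-abelian so that Theorem~\ref{th1.1} applies), yielding $\dim\mathcal{M}(L)\le\tfrac12(n+m-4)(n-m-1)$. If instead $K\cap L^{2}=0$ then $\dim(L/K)^{2}=m$ and $\dim(L^{2}\cap K)=0$, yielding $\dim\mathcal{M}(L)\le\tfrac12(n+m-3)(n-m-2)+1$. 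In both cases one checks that the right-hand side lies strictly below the target $\tfrac12(n-1)(n-2)+4-n$: the gaps reduce to $\tfrac12(m^{2}-3m+6)$ and $\tfrac12(m^{2}-m+2)$ respectively, both strictly positive for every $m$ since the discriminants $9-24$ and $1-8$ are negative. This gives $\dim\mathcal{M}(L)<\tfrac12(n-1)(n-2)+4-n$, i.e. $n-3<s(L)$.

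The main obstacle is the second step: rigorously justifying that membership of $K$ in the epicenter forces $\mu=0$, and hence the clean equality $\dim\mathcal{M}(L)=\dim\mathcal{M}(L/K)-\dim(L^{2}\cap K)$ rather than only the inequality recorded in Theorem~\ref{th1.2}. Everything afterwards is elementary bookkeeping: choosing $K$ inside $Z^{*}(L)$ and running the two-case estimate is routine, the only care needed being to retain strictness so that the final inequality is strict. Should the exterior-square description of $\mu$ be awkward to invoke directly, a fallback is to combine the equality above with Theorem~\ref{th1.1} for $L/K$, but it is the vanishing of the Ganea map that furnishes the sharp equality underlying the strict bound.
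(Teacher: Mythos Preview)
The paper does not prove this lemma at all; it simply quotes it from \cite[Corollary~2.3]{s4}. So there is no ``paper's own proof'' to compare against. Your argument, however, is a correct and self-contained proof of the statement.

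The key step---choosing a one-dimensional $K\subseteq Z^{\wedge}(L)$ and observing that then the Ganea map $\mu$ vanishes, so that $\dim\mathcal{M}(L)=\dim\mathcal{M}(L/K)-\dim(L^{2}\cap K)$---is exactly right; it is the Lie-algebra analogue of the standard fact that the natural map $L\wedge L\to (L/K)\wedge(L/K)$ is an isomorphism once $K$ lies in the exterior centre (this is essentially \cite[Corollary~2.3]{N1}, which the paper also uses). Combining that equality with Theorem~\ref{th1.1} applied to $L/K$ and the quadratic gap computation $\tfrac12(m^{2}-3m+6)>0$, respectively $\tfrac12(m^{2}-m+2)>0$, gives the strict bound $\dim\mathcal{M}(L)<\tfrac12(n-1)(n-2)+4-n$, i.e.\ $n-3<s(L)$, as you wrote.

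One small simplification: your Case~2 ($K\cap L^{2}=0$) never actually occurs. For a nilpotent Lie algebra one always has $Z^{\wedge}(L)\subseteq L^{2}$ (the paper itself invokes this as \cite[Corollary~2.3]{J4} in the proof of Proposition~\ref{pr3.2}), so any one-dimensional $K\subseteq Z^{\wedge}(L)$ automatically satisfies $K\subseteq L^{2}$. Hence only your Case~1 is needed, and the hypothesis $\dim L^{2}\ge 2$ is used exactly once, to ensure $\dim(L/K)^{2}=m-1\ge 1$ so that Theorem~\ref{th1.1} applies to $L/K$. This does not affect correctness, only economy.
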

The following Lemma determines the structure of all nilpotent Lie algebras $ L $ when $ \dim L^{2}=1. $
\begin{lem}\cite[Lemma 3.3]{s1}\label{l1.2}
Let $ L $ be an $ n $-dimensional Lie algebra with derived subalgebra of dimension $ 1. $  Then $ L $ is isomorphic to $ H(m)\oplus A(n-2m-1) $ for some $ m\geq 1. $ 
\end{lem}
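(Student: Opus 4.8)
The plan is to reconstruct the entire bracket from a single alternating bilinear form and then put that form into symplectic normal form. First I would use nilpotency (the preceding sentence restricts attention to nilpotent $L$) to push $L^2$ into the centre. Since $\dim L^{2}=1$, write $L^{2}=\langle z\rangle$ with $z\neq 0$. The lower central series gives $L^{3}=[L,L^{2}]\subseteq L^{2}$, and because $L$ is nilpotent the chain must descend strictly until it reaches $0$; as $\dim L^{2}=1$ this forces $L^{3}=0$. Hence $[L,z]=0$, i.e. $z\in Z(L)$ and $L^{2}\subseteq Z(L)$.

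Next I would encode the bracket as a form. Since every bracket lands in $\langle z\rangle$, there is a unique alternating bilinear map $B\colon L\times L\to\mathbb{F}$ with $[x,y]=B(x,y)\,z$ for all $x,y\in L$. The advantage of having $z$ central is that the Jacobi identity becomes automatic: in the Jacobiator of $x,y,w$ each summand has the shape $B(\cdot,\cdot)\,[z,\cdot]=0$, so \emph{any} alternating $B$ produces a Lie algebra and $L$ is completely determined by $B$. Moreover $x\in Z(L)$ iff $[x,y]=0$ for all $y$ iff $B(x,\cdot)\equiv 0$, so the radical of $B$ coincides with $Z(L)$; in particular $z\in\mathrm{rad}(B)$.

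The heart of the argument is the classification of alternating forms. I would invoke the symplectic basis theorem, valid over an arbitrary field for an alternating form: one can choose a basis $e_{1},f_{1},\dots,e_{m},f_{m},g_{1},\dots,g_{n-2m}$ of $L$ with $B(e_{i},f_{i})=1$, all other basis brackets zero, and $g_{1},\dots,g_{n-2m}$ spanning $\mathrm{rad}(B)$. The one extra precaution is to make $z$ a radical vector: since $z$ is a nonzero element of $\mathrm{rad}(B)$, I start the basis of the radical with $g_{1}=z$ and extend. Then $[e_{i},f_{i}]=z$, while $g_{2},\dots,g_{n-2m}$ are central and bracket trivially with everything.

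Reading off the structure then finishes the proof. The span of $e_{1},f_{1},\dots,e_{m},f_{m},z$ is closed under the bracket and is precisely a copy of $H(m)$, whereas $\langle g_{2},\dots,g_{n-2m}\rangle$ is a central abelian ideal of dimension $n-2m-1$ meeting $H(m)$ trivially and commuting with it; hence $L\cong H(m)\oplus A(n-2m-1)$ as Lie algebras. Finally $L$ is non-abelian because $\dim L^{2}=1$, so $B\not\equiv 0$ and its rank $2m$ is at least $2$, giving $m\geq 1$. I expect the main obstacle to be bookkeeping rather than conceptual: arranging the symplectic reduction to respect the distinguished central line $\langle z\rangle$, and then checking that the resulting splitting is a direct sum of \emph{Lie algebras} and not merely of vector spaces.
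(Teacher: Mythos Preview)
Your argument is correct. The paper does not actually prove this lemma; it merely quotes it from \cite[Lemma~3.3]{s1}, so there is no in-paper proof to compare against. Your approach---reducing to $L^{3}=0$ via nilpotency, encoding the bracket by a single alternating form $B$ with $[x,y]=B(x,y)z$, and then invoking the symplectic normal form while placing $z$ in the radical---is the standard route and is exactly what one finds in the cited source. The only caveat worth recording explicitly is the one you already flagged: the lemma as printed says ``Lie algebra'' but the surrounding text restricts to nilpotent $L$, and without that hypothesis the statement fails (the two-dimensional non-abelian Lie algebra $[x,y]=y$ has $\dim L^{2}=1$ but is not of the form $H(m)\oplus A(k)$). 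Your use of nilpotency to force $L^{3}=0$ is therefore essential, not incidental.
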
 
The Lie algebra $ L $ is a central product of $ A $ and $ B, $ if $ L=A+B, $ where $ A $ and $ B $ are ideals of $ L $  such that $ [A,B]=0 $ and $ A\cap B\subseteq Z(L). $ We denote the central product of two Lie algebras $ A $ and $ B $ by $ A\dotplus B. $\\
We say a Lie algebra $ L $ is semidirect sum of an ideal $ I $ and subalgebra $ K $ if $ L=I+K $ and $ I\cap K=0. $ The semidirect sum of an ideal $ I $ and a subalgebra $ K $ is denoted by $ I\rtimes K. $
\begin{lem}\cite[Lemma 1]{I}\label{li}
Let $ L $ be a nilpotent Lie algebra and $ H $ be a subalgebra of $ L $ such that $ L^{2}=H^{2}+L^{3}. $ Then $ L^{i}=H^{i} $ for all $ i\geq 2. $ Moreover, $ H $ is an ideal of $ L. $ 
\end{lem}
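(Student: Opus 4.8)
The plan is to reduce everything to the single identity $L^2 = H^2$, after which both conclusions fall out with only routine bookkeeping. The hypothesis $L^2 = H^2 + L^3$ is exactly the base case $k=3$ of the claim that $L^2 = H^2 + L^k$ for every $k\geq 3$; since $L$ is nilpotent, say $L^{c+1}=0$, the instance $k=c+1$ then gives $L^2 = H^2 + L^{c+1} = H^2$. So the heart of the argument is an induction on $k$ that keeps pushing the unwanted tail $L^k$ one step higher in the lower central series.

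For the inductive step I would assume $L^2 = H^2 + L^k$ and exploit the elementary observation $[L,H]\subseteq [L,L]=L^2 = H^2 + L^k$. Feeding this into the Jacobi identity gives
\[
[H^2,L]=[[H,H],L]\subseteq [[H,L],H]+[H,[H,L]]\subseteq [H^2+L^k,H]\subseteq H^3+L^{k+1},
\]
and then, substituting the inductive hypothesis for $L^2$ once more,
\[
L^3=[L^2,L]=[H^2+L^k,L]=[H^2,L]+L^{k+1}\subseteq H^2+L^{k+1}.
\]
Now I plug this back into the \emph{original} hypothesis: $L^2 = H^2+L^3\subseteq H^2+L^{k+1}$, while the reverse inclusion is automatic because $H^2\subseteq L^2$ and $L^{k+1}\subseteq L^2$. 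Hence $L^2 = H^2+L^{k+1}$, completing the induction and thus establishing $L^2 = H^2$.

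Once $L^2 = H^2$ is available, the remaining claims are short. Since $[L,H]\subseteq L^2 = H^2\subseteq H$, the subalgebra $H$ is an ideal; I would actually record the sharper inclusion $[L,H]\subseteq H^2$, which is what powers the last part. By induction on $j$ one checks $[L,H^j]\subseteq H^{j+1}$ for all $j\geq 1$: the base case is $[L,H]\subseteq H^2$, and the step is another Jacobi computation
\[
[L,H^{j+1}]\subseteq [[L,H^j],H]+[H^j,[L,H]]\subseteq [H^{j+1},H]+[H^j,H^2]\subseteq H^{j+2}.
\]
Finally $L^i = H^i$ for $i\geq 2$ follows by induction on $i$: the case $i=2$ is $L^2=H^2$, and assuming $L^i=H^i$ one gets $L^{i+1}=[L^i,L]=[H^i,L]\subseteq H^{i+1}$ from the displayed claim, while $H^{i+1}\subseteq L^{i+1}$ holds trivially from $H\subseteq L$.

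The only genuinely delicate point is the first induction: the Jacobi-identity manipulation that trades the tail $L^k$ for $L^{k+1}$ while leaving only an $H$-term behind, with nilpotency supplying the termination $L^{c+1}=0$ that collapses the tail entirely. Everything downstream of $L^2=H^2$ is a straightforward inclusion-chasing induction, so I expect no obstacle there beyond keeping the bracket indices straight.
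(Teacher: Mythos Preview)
Your argument is correct. The induction that upgrades $L^2=H^2+L^k$ to $L^2=H^2+L^{k+1}$ via the Jacobi identity, and then invokes nilpotency to kill the tail, is the standard device for this result; the remaining deductions ($H$ is an ideal, $[L,H^j]\subseteq H^{j+1}$, and finally $L^i=H^i$) are routine once $L^2=H^2$ is in hand.

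Note, however, that the present paper does not supply a proof of this lemma: it is quoted verbatim from Zack \cite[Lemma~1]{I} and used as a black box. So there is no in-paper proof to compare against. Your write-up is essentially the argument one finds in the original source, with perhaps a slightly more explicit treatment of the auxiliary inclusion $[L,H^j]\subseteq H^{j+1}$.
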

Let $ L $ be a Lie algebra we assume that the reader is familiar with the basic definitions and properties of the tensor square $L\otimes L$ in \cite{E}.
   By a terminology of \cite{E}, let $ L\square L $ be the submodule of tensor square $ L\otimes L $ generated by the elements $ l\otimes l $ for all $ l\in L $. One can check that $ L\square L $ lies in the center of $ L\otimes L $ and  the exterior square $ L\wedge L $ of $ L $ is defined as   $  L\otimes L / L\square L$.\\
By looking at the classification of all nilpotent Lie algebras in \cite{G, M}, the nilpotent Lie algebras with derived subalgebra of dimension $ m $ such that $ 2\leq m\leq 4 $ are listed in the following tables.
\begin{longtable}{cccc}
\caption{: nilpotent Lie algebras of dimension at most $ 6 $ with $  \dim L^{2}=2$}\\
\hline
 \multicolumn{1}{c}{\textbf{Name}} & \multicolumn{1}{c}{\textsf{Nonzero multiplication}}    \\
\hline
\endhead
\hline \multicolumn{2}{r}{\small \itshape continued on the next page}
\endfoot
\endlastfoot
$ L_{4,3} $& $[x_{1},x_{2}] =x_{3}, [x_{1},x_{3}] =x_{4}$\\ \\
$ L_{5,3} $& $[x_{1},x_{2}] =x_{3}, [x_{1},x_{3}] =x_{4}$\\ \\
$ L_{5,5} $& $[x_{1},x_{2}] =x_{3}, [x_{1},x_{3}] =x_{5}, [x_{2},x_{4}] =x_{5}$\\ \\
$ L_{5,8} $ &$[x_{1},x_{2}] =x_{4}, [x_{1},x_{3}] =x_{5}$\\  \\
$ L_{6,3} $& $[x_{1},x_{2}] =x_{3}, [x_{1},x_{3}] =x_{4}$\\ \\
$ L_{6,5} $& $[x_{1},x_{2}] =x_{3}, [x_{1},x_{3}] =x_{5}, [x_{2},x_{4}] =x_{5}$\\ \\
$ L_{6,8} $ &$[x_{1},x_{2}] =x_{4}, [x_{1},x_{3}] =x_{5}$\\  \\
 $ L_{6,10} $ & $[x_{1},x_{2}] =x_{3}, [x_1, x_3]=x_6, [x_{4},x_{5}] =x_{6}$ \\ \\
 $ L_{6,22}(\varepsilon) $& $[x_{1},x_{2}] =x_{5}, [x_{1},x_{3}] =x_{6}, [x_{2},x_{4}]  =\varepsilon  x_{6}, [x_3, x_4]=x_5$ \\ \\
\hline
\label{ta1}
\end{longtable}
\begin{longtable}{cccc}
\caption{: $ 7 $-dimensional nilpotent Lie algebras  with $  \dim L^{2}=2$}\\
\hline \multicolumn{1}{c}{\textbf{Name}} & \multicolumn{1}{c}{\textsf{Nonzero multiplication}}    \\
\hline
\endhead
\hline \multicolumn{2}{r}{\small \itshape continued on the next page}
\endfoot
\endlastfoot
$ L_{6,3}\oplus A(1) $& $[x_{1},x_{2}] =x_{3}, [x_{1},x_{3}] =x_{4}$\\ \\
$ L_{6,5}\oplus A(1) $& $[x_{1},x_{2}] =x_{3}, [x_{1},x_{3}] =x_{5}, [x_{2},x_{4}] =x_{5}$\\ \\
$ L_{6,8}\oplus A(1) $ &$[x_{1},x_{2}] =x_{4}, [x_{1},x_{3}] =x_{5}$\\  \\
 $ L_{6,22}(\varepsilon)\oplus A(1) $& $[x_{1},x_{2}] =x_{5}, [x_{1},x_{3}] =x_{6}, [x_{2},x_{4}]  =\varepsilon  x_{6}, [x_3, x_4]=x_5$ \\ \\
 $ L_{6,10}\oplus A(1) $ & $[x_{1},x_{2}] =x_{3}, [x_1, x_3]=x_6, [x_{4},x_{5}] =x_{6}$ \\ \\
 $ 27A$& $[x_{1},x_{2}] =x_{6}, [x_{1},x_{4}] =x_{7}, [x_{3},x_{5}]  =x_{7}$\\ \\
$ 27B$&$[x_{1},x_{2}]= [x_{3},x_{4}] =x_{6}, [x_{1},x_{5}] =[x_{2},x_{3}] =x_{7}$\\ \\
$ 157 $& $[x_{1},x_{2}] =x_{3}, [x_{1},x_{3}] =x_{7}, [x_{2},x_{4}] =x_{7}, [x_5, x_6]=x_7$\\

\hline
\label{ta2}
\end{longtable}
 \begin{longtable}{cccc}
\caption{: nilpotent Lie algebras of dimension at most $ 6 $ with $  \dim L^{2}=3$}\\
\hline \multicolumn{1}{c}{\textbf{Name}} & \multicolumn{1}{c}{\textsf{Nonzero multiplication}}    \\
\hline
\endhead
\hline \multicolumn{2}{r}{\small \itshape continued on the next page}
\endfoot
\endlastfoot
$ L_{5,6} $ &$[x_{1},x_{2}] =x_{3}, [x_{1},x_{3}] =x_{4}, [x_{1},x_{4}] =[x_{2},x_{3}] =x_{5}$\\  \\
 $ L_{5,7} $& $[x_{1},x_{2}] =x_{3}, [x_{1},x_{3}] =x_{4}, [x_{1},x_{4}]  =x_{5}$ \\ \\
$ L_{5,9} $& $[x_{1},x_{2}] =x_{3}, [x_{1},x_{3}] =x_{4}, [x_{2},x_{3}] =x_{5}$ \\ \\
$ L_{6,6}$&$[x_{1},x_{2}] =x_{3}, [x_{1},x_{3}] =x_{4}, [x_{1},x_{4}] =[x_{2},x_{3}] =x_{5}$\\ \\
$ L_{6,7} $& $[x_{1},x_{2}] =x_{3}, [x_{1},x_{3}] =x_{4}, [x_{1},x_{4}]  =x_{5}$\\ \\
$ L_{6,9} $& $[x_{1},x_{2}] =x_{3}, [x_{1},x_{3}] =x_{4}, [x_{2},x_{3}] =x_{5}$\\ \\
$ L_{6,11}$ &$[x_{1},x_{2}] =x_{3}, [x_{1},x_{3}] =x_{4},$ \\ 
&$[x_{1},x_{4}] =[x_{2},x_{3}] =[x_{2},x_{5}]=x_{6}$ \\ 
\\
 $ L_{6,12} $& $[x_{1},x_{2}] =x_{3}, [x_{1},x_{3}] =x_{4}, [x_{1},x_{4}] =[x_{2},x_{5}] =x_{6}$  \\
  \\
   $ L_{6,13} $& $[x_{1},x_{2}] =x_{3}, [x_{1},x_{3}] = [x_{2},x_{4}] =x_{5},$ \\&$[x_{1},x_{5}] = [x_{3},x_{4}] =x_{6}$ \\
 \\
 $ L_{6,19}(\epsilon) $ & $[x_{1},x_{2}] =x_{4}, [x_{1},x_{3}] =  x_{5}, [x_{1},x_{5}] =[x_{2},x_{4}] =x_{6}, $\\ 
 &$[x_{3},x_{5}] =\epsilon x_{6}$   \\
 \\
$ L_{6,20} $ & $[x_{1},x_{2}] =x_{4}, [x_{1},x_{3}]  =x_{5},[x_{1},x_{5}] = [x_{2},x_{4}] =x_{6}$ \\ \\
 $ L_{6,23} $&$[x_{1},x_{2}] =x_{3}, [x_{1},x_{3}] =[x_{2},x_{4}] =x_{5}, [x_{1},x_{4}] =x_{6}$\\
 \\
 $ L_{6,24}(\epsilon) $ &$[x_{1},x_{2}] =x_{3}, [x_{1},x_{3}] =[x_{2},x_{4}] =x_{5},$  \\
 & $[x_{1},x_{4}] =\varepsilon x_{6}, [x_{2},x_{3}] =x_{6}$\\
 \\
 $ L_{6,25} $& $[x_{1},x_{2}] =x_{3}, [x_{1},x_{3}] =x_{5}, [x_{1},x_{4}] =x_{6}$   \\
 \\
$ L_{6,26} $& $[x_{1},x_{2}] =x_{4}, [x_{1},x_{3}] =x_{5}, [x_{2},x_{3}] =x_{6}$   \\

 \hline
 \label{ta3}
\end{longtable}
\begin{longtable}{cccc}
\caption{: $ 7 $-dimensional nilpotent Lie algebras  with $  \dim L^{2}=3$}\\
\hline \multicolumn{1}{c}{\textbf{Name}} & \multicolumn{1}{c}{\textsf{Nonzero multiplication}}    \\
\hline
\endhead
\hline \multicolumn{2}{r}{\small \itshape continued on the next page}
\endfoot
\endlastfoot
$ 37A $& $[x_{1},x_{2}] =x_{5}, [x_{2},x_{3}] =x_{6}, [x_{2},x_{4}] =x_{7}$  \\
\\
 $ 37B $  & $[x_{1},x_{2}] =x_{5}, [x_{2},x_{3}] =x_{6}, [x_{3},x_{4}] =x_{7}$  \\
\\
$ 37C $  & $[x_{1},x_{2}] =[x_{3},x_{4}]=x_{5}, [x_{2},x_{3}] =x_{6}, [x_{2},x_{4}] =x_{7}$   \\
\\
$ 37D $  & $[x_{1},x_{2}] =[x_{3},x_{4}]=x_{5}, [x_{1},x_{3}] =x_{6}, [x_{2},x_{4}] =x_{7}$   \\
\\
$ 257A $&$[x_{1},x_{2}] =x_{3}, [x_{1},x_{3}] =[x_{2},x_{4}]=x_{6}, [x_{1}, x_{5}]=x_{7}$\\
\\
$ 257B $&$[x_{1},x_{2}] =x_{3}, [x_{1},x_{3}] =x_{6}, [x_{1}, x_{4}]=[x_{2},x_{5}]=x_{7}$  \\
\\
$ 257C $ & $[x_{1},x_{2}] =x_{3}, [x_{1},x_{3}] =[x_{2}, x_{4}]=x_{6}, [x_{2},x_{5}]=x_{7}$  \\
\\
$ 257D $&$[x_{1},x_{2}] =x_{3}, [x_{1},x_{3}] =[x_{2}, x_{4}]=x_{6},$ 
\\
&$[x_{1},x_{4}]=[x_{2},x_{5}]=x_{7}$  \\
\\
$ 257E $& $[x_{1},x_{2}] =x_{3}, [x_{1},x_{3}] =[x_{4}, x_{5}]=x_{6}, [x_{2},x_{4}]=x_{7}$ \\
\\
$ 257F $& $[x_{1},x_{2}] =x_{3}, [x_{2},x_{3}] =[x_{4}, x_{5}]=x_{6}, [x_{2},x_{4}]=x_{7}$ \\
\\
$ 257G $& $[x_{1},x_{2}] =x_{3}, [x_{1},x_{3}] =[x_{4}, x_{5}]=x_{6},$\\ 
&$[x_{1},x_{5}]=[x_{2},x_{4}]=x_{7}$ \\
\\
$ 257H $& $[x_{1},x_{2}] =x_{3}, [x_{1},x_{3}] =[x_{2}, x_{4}]=x_{6}, [x_{4},x_{5}]=x_{7}$\\
\\
$ 257I $&$[x_{1},x_{2}] =x_{3}, [x_{1},x_{3}] =[x_{1}, x_{4}]=x_{6},$\\ 
&$[x_{1},x_{5}]=[x_{2},x_{3}]=x_{7}$ \\
\\
$ 257J $&$[x_{1},x_{2}] =x_{3}, [x_{1},x_{3}] =[x_{2}, x_{4}]=x_{6},$
$[x_{1},x_{5}]=[x_{2},x_{3}]=x_{7}$\\
\\
$ 257K $&$[x_{1},x_{2}] =x_{3}, [x_{1},x_{3}] =x_{6}, [x_{2},x_{3}] =[x_{4},x_{5}]=x_{7}$\\
\\
$ 257L $&$[x_{1},x_{2}] =x_{3}, [x_{1},x_{3}] =[x_{2}, x_{4}]=x_{6},$\\ 
&$[x_{2},x_{3}]=[x_{4},x_{5}]=x_{7}$\\
\\
 $ 147A $&  $[x_{1},x_{2}] =x_{4},[x_{1},x_{3}]  =x_{5}, $ \\&$[x_{1},x_{6}] =[x_{2},x_{5}]=  [x_{3},x_{4}] =x_{7}$\\
 \\
  $ 147B $&  $[x_{1},x_{2}] =x_{4},[x_{1},x_{3}]  =x_{5}, $\\&$[x_{1},x_{4}] =[x_{2},x_{6}]=  [x_{3},x_{5}] =x_{7}$\\
 \\
  $ 1457A $&  $[x_{1},x_{i}] =x_{i+1}~~ i=2,3 , ~~[x_{1},x_{4}] =[x_{5},x_{6}]=x_{7}$ \\
 \\
 $ 1457B $ &  $[x_{1},x_{i}] =x_{i+1}~~ i=2,3 , $ \\&$[x_{1},x_{4}] =[x_{2},x_{3}]=[x_{5},x_{6}]=x_{7}$\\
 \\
 \\
$ 137A $ & $[x_{1},x_{2}] =x_{5}, [x_{1},x_{5}] =[x_{3},x_{6}] = x_{7}, [x_{3},x_{4}] =x_{6}$   \\
 \\
$ 137B $ & $[x_{1},x_{2}] =x_{5}, [x_{3},x_{4}] =x_{6},$  
  \\& $[x_{1},x_{5}] =[x_{2},x_{4}] = [x_{3},x_{6}] =x_{7}$ \\
 \\
$ 137C $  & $[x_{1},x_{2}] =x_{5}, [x_{1},x_{4}] =[x_{2},x_{3}] =x_{6},$  
  \\& $[x_{1},x_{6}] =x_{7},  [x_{3},x_{5}] =-x_{7}$ \\
 \\
 $ 137D $ & $[x_{1},x_{2}] =x_{5}, [x_{1},x_{4}] =[x_{2},x_{3}] =x_{6},$
  \\& $[x_{1},x_{6}] =[x_{2},x_{4}] =x_{7},  [x_{3},x_{5}] =-x_{7}$ \\
 \\
$ 1357A $& $[x_{1},x_{2}] =x_{4}, [x_{1},x_{4}] =[x_{2},x_{3}] =x_{5},$  
  \\& $[x_{1},x_{5}] =[x_{2},x_{6}] =x_{7},  [x_{3},x_{4}] =-x_{7}$ \\
 \\
 $ 1357B $ & $[x_{1},x_{2}] =x_{4}, [x_{1},x_{4}] =[x_{2},x_{3}] =x_{5},$  
  \\&$[x_{1},x_{5}] =[x_{3},x_{6}] =x_{7},  [x_{3},x_{4}] =-x_{7}$ \\
 \\
 $ 1357C $ & $[x_{1},x_{2}] =x_{4}, [x_{1},x_{4}] =[x_{2},x_{3}] =x_{5},$ 
  \\& $[x_{1},x_{5}] =[x_{2},x_{4}] =x_{7},  [x_{3},x_{4}] =-x_{7}$ \\
 \hline
 \label{ta4}
\end{longtable}
 \begin{longtable}{ccccc| |cccccc}
 \caption{: $ 7 $-dimensional decomposable nilpotent Lie algebras with $  \dim L^{2}=3$}\\
\hline \multicolumn{2}{c}{\textbf{Name}} & \multicolumn{2}{c}{\textbf{Name}}   \\
\hline
\endhead
\hline \multicolumn{3}{r}{\small \itshape continued on the next page}
\endfoot
\endlastfoot
&$L_{4,3} \oplus H(1)$ & $ L_{6,19}(\epsilon) \oplus A(1)$ \\
&$ L_{5,6} \oplus A(2)$ &$ L_{6,20} \oplus A(1)$\\
&$ L_{5,7} \oplus A(2)$&$ L_{6,23} \oplus A(1)$\\
&$ L_{5,9} \oplus A(2)$&$ L_{6,24}(\epsilon)\oplus A(1)$\\
&$ L_{6,11} \oplus A(1)$& $ L_{6,25} \oplus A(1)$\\
&$ L_{6,12} \oplus A(1)$&$ L_{6,26} \oplus A(1)$\\
&$ L_{6,13} \oplus A(1)$ \\
 \hline
 \label{ta5}
\end{longtable}
\begin{longtable}{cccc}
\caption{: nilpotent Lie algebras of dimension at most $ 6 $ with $  \dim L^{2}=4$}\\
\hline
 \multicolumn{1}{c}{\textbf{Name}} & \multicolumn{1}{c}{\textsf{Nonzero multiplication}}    \\
\hline
\endhead
\hline \multicolumn{2}{r}{\small \itshape continued on the next page}
\endfoot
\endlastfoot
$ L_{6,14} $& $[x_{1},x_{2}] =x_{3}, [x_{1},x_{3}] =x_{4},  [x_{1},x_{4}] =x_{5},$ \\ 
& $ [x_{2},x_{3}] =x_{5}, [x_{2},x_{5}] =x_{6}, [x_{3},x_{4}] =-x_{6},$\\\\
$ L_{6,15} $& $[x_{1},x_{2}] =x_{3}, [x_{1},x_{3}] =x_{4}, [x_{1},x_{4}] =x_{5}, $\\ 
& $ [x_{2},x_{5}] =x_{6}, [x_{3},x_{4}] =-x_{6},  $\\\\
$ L_{6,16} $& $[x_{1},x_{2}] =x_{3}, [x_{1},x_{3}] =x_{4}, [x_{1},x_{4}] =x_{5}$\\ 
& $ [x_{2},x_{5}] =x_{6},  $ $ [x_{3},x_{4}] =-x_{6},  $\\\\
$ L_{6,17} $ &$[x_{1},x_{2}] =x_{3}, [x_{1},x_{3}] =x_{4},$ $ [x_{1},x_{4}] =x_{5},  $\\  
& $ [x_{1},x_{3}] =x_{4}, [x_{1},x_{5}] =x_{6},  $ $ [x_{2},x_{3}] =x_{6},  $\\ \\
$ L_{6,18} $& $[x_{1},x_{2}] =x_{3}, [x_{1},x_{3}] =x_{4},$ $ [x_{1},x_{4}] =x_{5},  $\\ 
&$ [x_{1},x_{5}] =x_{6},  $\\ \\
$ L_{6,21}(\varepsilon) $& $[x_{1},x_{2}] =x_{3}, [x_{1},x_{3}] =x_{4}, [x_{2},x_{3}] =x_{5}$\\ 
& $ [x_{1},x_{4}] =x_{6},  $ $ [x_{1},x_{3}] =x_{4},  $ $ [x_{2},x_{5}] =\varepsilon x_{6}. $\\
\hline
\label{ta61}
\end{longtable}
By using  \cite[Theorem 3.1]{N}, \cite[Theorems 3.9 and 4.5]{s1}, \cite[Main Theorem]{s3}, \cite[Main Theorem]{s4} and  \cite[Theorem 2.3]{s5} the structures of all nilpotent Lie algebras $ L $ with $ s(L)\in \lbrace 0,1,2,3,4,5\rbrace $ are summarized as following.
\begin{thm}\label{s}
Let $ L $ be an $ n $-dimensional nilpotent Lie algebra. 
\begin{itemize}
\item[(i).] If $ s(L)=0, $ then $ L $ is isomorphic to $ H(1)\oplus A(n-3). $
\item[(ii).]If $ s(L)=1, $ then $ L $ is isomorphic to  $ L_{5,8}. $
\item[(iii).]If $ s(L)=2, $ then $ L $ is isomorphic to one of the Lie algebras  $ L_{5,8}\oplus A(1), $ $ L_{4,3} $ or $ H(m)\oplus A(n-2m-1) $ for all $ m\geq 2. $
\item[(iv).]If $ s(L)=3, $ then $ L $ is isomorphic to one of the Lie algebras $ L_{5,8}\oplus A(2), $ $ L_{4,3}\oplus A(1), $ $ L_{5,5}, $  $ L_{6,22}(\varepsilon) $ for all $\varepsilon\in \mathbb{F}/(\stackrel{*}{\sim}) $ or $ L_{6,26}. $  
\item[(v).]If $ s(L)=4, $ then $ L $ is isomorphic to one of the Lie algebras $ L_{5,8}\oplus A(3), $ $ L_{4,3}\oplus A(2), $ $ L_{5,5}\oplus A(1), $  $ L_{6,22}(\varepsilon) \oplus A(1)$ for all $\varepsilon\in \mathbb{F}/(\stackrel{*}{\sim}), $ $ L_{5,6}, $ $ L_{5,7}, $ $ L_{5,9} $ or $ 37A. $  
\item[(vi).]If $ s(L)=5, $ then $ L $ is isomorphic to one of the Lie algebras  $L_{5,8}\oplus A(4), $ $ L_{4,3}\oplus A(3), $ $ L_{5,5}\oplus A(2), $  $ L_{6,22}(\varepsilon) \oplus A(2)$ for all $\varepsilon\in \mathbb{F}/(\stackrel{*}{\sim}), $ $ L_{6,26}\oplus A(1), $ $ L_{6,10}, $ $ L_{6,23}, $ $ L_{6,25}, $ $ 37B, $ $ 37C $ or $ 37D. $ 
\end{itemize}
\end{thm}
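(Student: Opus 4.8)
The plan is to run a single case analysis on $m=\dim L^{2}$, exploiting two facts that fall out of the results already quoted. First, feeding the bound of Theorem~\ref{th1.1} into the definition of $s$ gives, for every non-abelian nilpotent $L$,
\[
s(L)=\tfrac12(n-1)(n-2)+1-\dim\mathcal{M}(L)\ \ge\ \tfrac12\big[(n-1)(n-2)-(n+m-2)(n-m-1)\big]=\tfrac12 m(m-1),
\]
so $s(L)\ge\binom{m}{2}$. Hence $s(L)\le 5$ forces $m\le 3$, and more precisely $m=2$ can occur only for $s(L)\ge1$ and $m=3$ only for $s(L)\ge3$; in particular no algebra with $\dim L^{2}\ge4$ (Table~\ref{ta61}) can appear, which is why that table plays no role below. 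Second, I would record an increment formula: using the direct-sum decomposition $\mathcal{M}(K\oplus A(r))\cong\mathcal{M}(K)\oplus\mathcal{M}(A(r))\oplus\big(K^{ab}\otimes A(r)\big)$ together with the definition of $s$, one computes $s(K\oplus A(r))=s(K)+r\,(\dim K^{2}-1)$. This reduces the whole classification to pinning down finitely many indecomposable ``cores'' for each $m$ and then reading off their $A(r)$-extensions.

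For the base case $m=1$, Lemma~\ref{l1.2} already gives $L\cong H(t)\oplus A(n-2t-1)$, and since $\dim L^{2}=1$ the increment formula shows $s(L)$ is independent of the abelian summand. A direct evaluation of $\dim\mathcal{M}(H(t))$ then yields $s(H(1))=0$ and $s(H(t))=2$ for all $t\ge2$, producing exactly the families $H(1)\oplus A(n-3)$ in part (i) and $H(t)\oplus A(n-2t-1)$ in part (iii), and confirming that $m=1$ contributes nothing to $s\in\{1,3,4,5\}$.

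For $m\in\{2,3\}$ I would bound the dimension and then consult the explicit lists. If $L$ is non-capable, Lemma~\ref{l1.1} gives $n-3<s(L)$, so $n\le s(L)+2\le 7$; the capable case is controlled by the equality discussion in Theorem~\ref{th1.1} and the identification $Z^{*}(L)=Z^{\wedge}(L)$, which together cap the dimension of an indecomposable capable algebra with $\dim L^{2}\ge2$. In every surviving dimension the relevant algebras are precisely those tabulated in Tables~\ref{ta1}--\ref{ta5}, so it remains to compute $\dim\mathcal{M}(L)$, hence $s(L)$, for each entry (using Theorem~\ref{th1.2}, Lemma~\ref{li}, and the exterior-square machinery of \cite{E} to split off central ideals). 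Matching the resulting values against $s=1,2,3,4,5$ selects the cores $L_{5,8}$, $L_{4,3}$, $L_{5,5}$, $L_{6,22}(\varepsilon)$, $L_{6,26}$, $L_{6,10}$, $L_{6,23}$, $L_{6,25}$, $37A$--$37D$, and the increment formula then generates their listed $\oplus A(r)$ companions.

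The main obstacle is the completeness of the dimension bound in the capable case: Lemma~\ref{l1.1} only handles non-capable algebras, so I must rule out an indecomposable capable algebra of large dimension with $\dim L^{2}\le3$ sneaking past the tables. I expect to settle this by combining the equality case of Theorem~\ref{th1.1} (which forces a rigid structure once $\dim\mathcal{M}(L)$ is near its maximum) with Lemma~\ref{li} to transfer the derived-subalgebra data to a small subalgebra, thereby forcing $n\le 7$. The remaining work---evaluating the Schur multiplier of each tabulated algebra and verifying the $\varepsilon$-parameter identifications for $L_{6,22}(\varepsilon)$---is routine but lengthy.
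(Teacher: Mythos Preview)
In the paper, Theorem~\ref{s} carries no proof at all: it is a compilation of the main theorems of \cite{N}, \cite{s1}, \cite{s3}, \cite{s4}, \cite{s5}, and the authors simply cite those sources. So there is nothing to compare at the level of this paper; what one can compare is your strategy against the technique those references (and the present paper, for $s=6,7$) actually use.

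Your two preliminary tools are correct and standard: feeding Theorem~\ref{th1.1} into the definition of $s$ does give $s(L)\ge\binom{m}{2}$, and the direct-sum identity $s(K\oplus A(r))=s(K)+r(\dim K^{2}-1)$ is exactly what produces the $\oplus A(r)$ ladders in the statement. Your $m=1$ case via Lemma~\ref{l1.2} is fine.

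The genuine gap is the dimension bound for \emph{capable} $L$ with $m\in\{2,3\}$. You invoke ``the equality discussion in Theorem~\ref{th1.1} and the identification $Z^{*}(L)=Z^{\wedge}(L)$'' to cap the dimension of an indecomposable capable algebra, but neither ingredient does that job: the equality clause in Theorem~\ref{th1.1} is stated only for $m=1$, and for a capable algebra one has $Z^{\wedge}(L)=0$, which carries no size information whatsoever. Your fallback plan, to combine the near-equality case of Theorem~\ref{th1.1} with Lemma~\ref{li}, is not a proof either: Lemma~\ref{li} transfers lower-central data to a subalgebra $H$ with $L^{2}=H^{2}+L^{3}$, but it does not constrain $\dim L$, and nothing in Theorem~\ref{th1.1} prevents large-dimensional capable $L$ with $m=2,3$ from having $s(L)$ small a priori. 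So as written the capable branch of your argument for $m\ge 2$ is open.

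The device that the cited papers use (and that this paper reuses verbatim for $s=6,7$; see Theorems~\ref{th2.4}, \ref{th2.5}, \ref{th3.4}, \ref{th3.6}) is not a direct dimension bound but an \emph{inductive quotient argument}: choose a one-dimensional central ideal $I$ (with $I\cap L^{2}=0$ if $Z(L)\not\subseteq L^{2}$, or $I\subseteq Z(L)\cap L^{2}$ otherwise), apply Theorem~\ref{th1.2} to obtain an inequality of the form $s(L/I)\le s(L)-c$, and then invoke the already-proved classification for the smaller value $s(L/I)$. This simultaneously bounds $n$ and pins down $L/I$, after which one lifts back to $L$. An alternative, used for the capable $m=2$ case in \cite{s4}, is to quote the explicit classification of capable nilpotent Lie algebras with $\dim L^{2}=2$ from \cite{J4}. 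Either of these would close your gap; the tools you propose do not.
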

The following lemma and propositions are used in the next sections.
 \begin{lem}\label{11**}
 Let $ L $ be a nilpotent Lie algebra  of dimension at most $ 6$ with $ \dim L^{2}=3. $ Then $ s(L), $ the structure and  the Schur multiplier of $L$  are given in the following table.
 \begin{longtable}{ccc}
\caption{}\\
\hline \multicolumn{1}{c}{\textsf{Name}} &\multicolumn{1}{c}{\textbf{$\dim \mathcal{M}(L)$}} & \multicolumn{1}{c}{\textsf{$s(L)$}}      \\
\hline
\endhead
\hline \multicolumn{3}{r}{\small \itshape continued on the next page}
\endfoot
\endlastfoot
$ L_{5,6}, $ $ L_{5,7}, $ $ L_{5,9} $ & $ 3 $  & $ 4 $ \\
\\
$ L_{6,6}, $ $ L_{6,7}, $  $  L_{6,9},  $ $ L_{6,11},  $ $ L_{6,12},  $  $ L_{6,19}(\varepsilon), $ $ L_{6,20},$ $ L_{6,24}(\varepsilon) $  &$ 5 $ & $ 6 $\\
\\
$ L_{6,13}  $  & $ 4 $ & $ 7$ \\
\\
$  L_{6,23},$ $ L_{6,25} $  & $ 6 $ & $ 5 $ \\
\\
$ L_{6,26} $ & $ 8 $ & $ 3 $\\
 \hline
  \label{ta6}
\end{longtable} 
\begin{proof}
The proof is obtained by a modification of the proof \cite[Lemma 2.5]{s4}. 
\end{proof}
 \end{lem}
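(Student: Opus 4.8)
The plan is to reduce the statement to a finite computation of Schur multipliers and then read off $ s(L) $ from its definition. Since $ s(L)=\frac{1}{2}(n-1)(n-2)+1-\dim\mathcal{M}(L) $, and every algebra in the table has $ n=\dim L\in\{5,6\} $, the additive constant is $ 7 $ when $ n=5 $ and $ 11 $ when $ n=6 $; hence it suffices to determine $ \dim\mathcal{M}(L) $ for each of the fifteen algebras $ L_{5,6},L_{5,7},L_{5,9},L_{6,6},L_{6,7},L_{6,9},L_{6,11},L_{6,12},L_{6,13},L_{6,19}(\varepsilon),L_{6,20},L_{6,23},L_{6,24}(\varepsilon),L_{6,25},L_{6,26} $ listed in Table \ref{ta3}. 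Once $ \dim\mathcal{M}(L) $ is known, the corresponding value of $ s(L) $ and the entries of the table follow by a one-line arithmetic check (for instance $ s(L_{6,13})=11-4=7 $ and $ s(L_{6,26})=11-8=3 $).

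To compute $ \dim\mathcal{M}(L) $ I would work through the exact sequence $ 0\to\mathcal{M}(L)\to L\wedge L\to L^{2}\to 0 $, in which the right-hand map sends $ x\wedge y $ to $ [x,y] $; this gives $ \dim\mathcal{M}(L)=\dim(L\wedge L)-\dim L^{2}=\dim(L\wedge L)-3 $, so the whole problem becomes the computation of the exterior square introduced via \cite{E}. For each algebra I would take the spanning set $ \{x_{i}\wedge x_{j}\}_{i<j} $ of $ L\wedge L $, impose the relations forced by the Jacobi identity together with $ x\wedge x=0 $ and the bilinear relations coming from the given multiplication table, and then reduce by linear algebra to obtain $ \dim(L\wedge L) $. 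As a consistency check, Theorem \ref{th1.1} with $ m=3 $ bounds $ \dim\mathcal{M}(L) $ by $ 4 $ when $ n=5 $ and by $ 8 $ when $ n=6 $, the latter being attained exactly by $ L_{6,26} $; whenever a proper central ideal $ K $ is available, Theorem \ref{th1.2} gives an independent pinning of $ \dim\mathcal{M}(L) $ from the data of $ L/K $ and $ K $.

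The main obstacle is purely computational and concentrated in the indecomposable six-dimensional algebras with the most entangled brackets, such as $ L_{6,13} $, $ L_{6,19}(\varepsilon) $, and $ L_{6,24}(\varepsilon) $: there the Jacobi identity produces several nontrivial dependencies among the $ x_{i}\wedge x_{j} $, and separating the cases $ \dim\mathcal{M}(L)\in\{4,5,6,8\} $ requires keeping careful track of which wedges survive. Because this is exactly the mechanism already carried out for the derived subalgebra of dimension $ 2 $ in \cite[Lemma 2.5]{s4}, the argument is a finite, algebra-by-algebra repetition of that method; the only new content is the bookkeeping for the longer multiplication tables appearing when $ \dim L^{2}=3 $.
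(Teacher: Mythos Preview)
Your proposal is correct and matches the paper's own proof, which is simply a one-line reference to \cite[Lemma 2.5]{s4}: both reduce the lemma to a finite, algebra-by-algebra computation of $\dim\mathcal{M}(L)$ over the list in Table~\ref{ta3}, after which $s(L)$ follows arithmetically. The only cosmetic difference is that you frame the computation via the exact sequence $0\to\mathcal{M}(L)\to L\wedge L\to L^{2}\to 0$ from \cite{E}, whereas the method of \cite[Lemma 2.5]{s4} (and the explicit calculations elsewhere in this paper) uses the Hardy--Stitzinger free-presentation technique of \cite{B1}; these two bookkeeping devices are equivalent and yield the same linear-algebra reduction.
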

 \begin{prop}
Let $ L $ be an $ 7 $-dimensional nilpotent Lie algebra with $ \dim L^{2}=3. $ Then the structure,  the Schur multiplier of $ L $ and $ s(L) $ are presented in the following table.
\begin{longtable}{ccc}
\caption{}\\
\hline \multicolumn{1}{c}{\textsf{Name}} &\multicolumn{1}{c}{\textbf{$\dim \mathcal{M}(L)$}} & \multicolumn{1}{c}{\textsf{$s(L)$}}      \\
\hline
\endhead
\hline \multicolumn{3}{r}{\small \itshape continued on the next page}
\endfoot
\endlastfoot
$(37A)$ & $ 12$& $ 4$  \\
\\
$(37B),$  $(37C),$  $(37D)$ & $ 11$& $ 5$\\
\\
$(257A),$  $ 257C, $ $ 257F $ & $ 9$& $ 7$ \\
\\
$(257B),$ $ 257D, $ $ 257E, $ $(257G),$ $(257H),$ $(257I),$ $(257J)$& $8 $& $ 8$ \\
\\
$ 147A, $ $ 147B, $ $L_{4,3}\oplus H(1),$  $L_{5,6}\oplus A(2),$  $L_{5,7}\oplus A(2),$  $L_{5,9}\oplus A(2)$ & $8 $& $ 8$\\
\\
 $L_{6, 11}\oplus A(1),$  $L_{6,12}\oplus A(1),$ $L_{6,19}(\varepsilon)\oplus A(1),$ $L_{6,20}\oplus A(1)$ & $8 $& $ 8$ \\
 \\
 $L_{6,24}(\varepsilon)\oplus A(1),$ $(257K),$ $(257L),$ & $8 $& $ 8$\\
 \\
 $ 1457A, $ $ 1457B ,$ $(1357B),$ $(1357C)$& $ 6$& $ 10$ \\
 \\
$(137A),$ $(137B),$ $(137C),$ $(137D),$ $(1357A),$ $L_{6,13}\oplus A(1)$& $7 $& $9 $\\
\\
$L_{6,23}\oplus A(1),$  $ L_{6,25}\oplus A(1) $& $ 9$& $ 7$\\
\\
$L_{6,26}\oplus A(1)$ & $ 11$& $ 5$\\

 \hline
  \label{ta7}
\end{longtable} 
\end{prop}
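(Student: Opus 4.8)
The plan is to reduce the entire table to the computation of $\dim\mathcal{M}(L)$ for each algebra, since for $n=7$ the defining relation $s(L)=\tfrac12(n-1)(n-2)+1-\dim\mathcal{M}(L)$ collapses to $s(L)=16-\dim\mathcal{M}(L)$; hence the last two columns are equivalent and it suffices to establish the middle one. The list of algebras to be treated is precisely the indecomposable ones of Table~\ref{ta4} together with the decomposable ones of Table~\ref{ta5}.

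First I would dispose of the decomposable algebras. For a direct sum $L=A\oplus B$ of ideals one has the well-known decomposition formula for Schur multipliers of Lie algebras, $\mathcal{M}(A\oplus B)\cong\mathcal{M}(A)\oplus\mathcal{M}(B)\oplus(A^{ab}\otimes B^{ab})$, so that $\dim\mathcal{M}(L)=\dim\mathcal{M}(A)+\dim\mathcal{M}(B)+(\dim A^{ab})(\dim B^{ab})$. Every summand appearing in Table~\ref{ta5} has dimension at most $6$, and its multiplier is recorded either in Theorem~\ref{s}, in Lemma~\ref{11**} (Table~\ref{ta6}), or is one of the elementary values $\dim\mathcal{M}(H(1))=2$ and $\dim\mathcal{M}(A(k))=\binom{k}{2}$. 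For instance $\dim\mathcal{M}(L_{4,3}\oplus H(1))=2+2+2\cdot2=8$ and $\dim\mathcal{M}(L_{6,23}\oplus A(1))=6+0+3\cdot1=9$, in agreement with the table. Thus every entry of Table~\ref{ta5} follows by a finite, bookkeeping-only computation.

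The substance lies in the indecomposable list of Table~\ref{ta4}. For these I would exploit the exact sequence attached to the commutator map $L\wedge L\to L^{2}$, whose kernel is $\mathcal{M}(L)$, giving $\dim\mathcal{M}(L)=\dim(L\wedge L)-\dim L^{2}=\dim(L\wedge L)-3$. The task then becomes computing the exterior square $L\wedge L=(L\otimes L)/(L\square L)$ directly from the structure constants in Table~\ref{ta4}: write down the spanning set $\{x_i\wedge x_j\}$, impose the relations induced by the bracket, and determine the rank. To control the harder cases I would pair this with the upper bound of Theorem~\ref{th1.2}: choosing a one-dimensional central ideal $K\subseteq Z(L)\cap L^{2}$ gives $\mathcal{M}(K)=0$, $\dim(L^{2}\cap K)=1$ and $\dim(L/K)^{ab}=4$, whence $\dim\mathcal{M}(L)\le\dim\mathcal{M}(L/K)+3$; this reduces each case to the six-dimensional quotient $L/K$ (with $\dim(L/K)^{2}=2$), whose multiplier is available from the lower-dimensional classification. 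Matching this upper bound against the lower bound supplied by explicit independent classes in $L\wedge L$ pins down $\dim\mathcal{M}(L)$ exactly. As a consistency check, Lemma~\ref{l1.1} forces the algebra $37A$, with $s(L)=4$, to be capable, which is compatible with the vanishing of its epicenter $Z^{*}(L)$.

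I expect the main obstacle to be the sheer number and mutual similarity of the indecomposable algebras, above all the twelve members of the $257$ family, many of which share the value $\dim\mathcal{M}(L)=8$ and differ by only a single bracket; for these the exterior-square rank computation must be organized carefully enough to certify that the small numerical gaps (for example $257A,257C,257F$ giving $9$ while the remaining cases give $8$) are genuine rather than an artifact of a miscounted relation. A second delicate point is the parametrized families entering through Table~\ref{ta5}, namely $L_{6,19}(\epsilon)\oplus A(1)$ and $L_{6,24}(\epsilon)\oplus A(1)$: here one must ensure that $\dim\mathcal{M}(L)$ does not jump for special values of $\epsilon$, which via the direct-sum formula reduces to the parameter-independence of $\dim\mathcal{M}(L_{6,19}(\epsilon))$ and $\dim\mathcal{M}(L_{6,24}(\epsilon))$ recorded in Table~\ref{ta6}, i.e.\ to the fact that the rank of the relevant relation matrix does not drop. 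Once these rank computations are carried out case by case, the table follows.
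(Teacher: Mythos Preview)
Your proposal is correct and follows essentially the same approach as the paper: the paper's proof consists of a single sentence referring to the method of \cite[Lemma~2.5]{s4}, which amounts to the Hardy--Stitzinger free-presentation computation of $\mathcal{M}(L)$ carried out algebra by algebra, together with the direct-sum formula of \cite[Theorem~1]{B1} for the decomposable cases. Your framing via the exterior square $\dim\mathcal{M}(L)=\dim(L\wedge L)-\dim L^{2}$ is an equivalent way of organizing the same rank computation, and your use of Theorem~\ref{th1.2} to bound the indecomposable cases against their $6$-dimensional quotients is a sensible (and slightly more structured) refinement of the bare case-by-case check.
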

\begin{proof}
The proof is  obtained similar to the proof of   \cite[Lemma 2.5]{s4}. 
\end{proof}
\begin{prop}
Let $ L $ be a nilpotent Lie algebra of dimension at most $ 6 $ with $ \dim L^{2}=4. $ Then the structure,  the Schur multiplier of $ L $ and $ s(L) $ are presented in the following table.
\begin{longtable}{ccc}
\caption{}\\
\hline \multicolumn{1}{c}{\textsf{Name}} &\multicolumn{1}{c}{\textbf{$\dim \mathcal{M}(L)$}} & \multicolumn{1}{c}{\textsf{$s(L)$}}      \\
\hline
\endhead
\hline \multicolumn{3}{r}{\small \itshape continued on the next page}
\endfoot
\endlastfoot
$ L_{6,14}, $  $ L_{6,16} $ & $ 2 $  & $ 9 $ \\
\\
$ L_{6,15}, $ $ L_{6,17},  $ $ L_{6,18}$ &$ 3$ & $ 8 $ \\
\\
$ L_{6,21}(\varepsilon)  $ & $ 4 $ & $ 7 $ \\
\\
\hline
  \label{ta81}
\end{longtable} 
\end{prop}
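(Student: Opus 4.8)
The plan is to reduce the whole table to the single quantity $\dim\mathcal M(L)$ and then read off $s(L)$ mechanically. Every algebra listed has $n=6$ and $\dim L^2=4$, so the defining relation $s(L)=\frac{1}{2}(n-1)(n-2)+1-\dim\mathcal M(L)$ collapses to $s(L)=11-\dim\mathcal M(L)$; hence the third column is forced by the second, and it suffices to compute the Schur multiplier of each of $L_{6,14},L_{6,15},L_{6,16},L_{6,17},L_{6,18},L_{6,21}(\varepsilon)$, which by Table \ref{ta61} are exactly the $6$-dimensional nilpotent Lie algebras with $\dim L^2=4$. Observe also that $\dim L^{ab}=n-\dim L^2=2$, so each $L$ is two-generated.

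To obtain the multiplier I would use the fundamental exact sequence attached to the exterior square, namely $0\to\mathcal M(L)\to L\wedge L\to L^2\to 0$ with the right-hand map $x\wedge y\mapsto[x,y]$. This yields $\dim\mathcal M(L)=\dim(L\wedge L)-\dim L^2=\dim(L\wedge L)-4$, so the computation reduces to the exterior square. Working in the basis $x_1,\dots,x_6$ of Table \ref{ta61}, I would present $L\wedge L$ by the $\binom{6}{2}=15$ generators $x_i\wedge x_j$ ($i<j$), impose the relations of $L\square L$ together with the exterior-square identities (bilinearity, $x\wedge x=0$, and $[x,y]\wedge z=x\wedge[y,z]-y\wedge[x,z]$), reduce the spanning set using the multiplication table, and subtract $4$. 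Equivalently one may work from a free nilpotent presentation $L\cong F/R$ and evaluate $(R\cap F^{2})/[R,F]$ directly.

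Two bounds act as guard rails and a consistency check. Theorem \ref{th1.1} with $n=6$, $m=4$ gives $\dim\mathcal M(L)\le\frac{1}{2}(8)(1)+1=5$. More sharply, take a one-dimensional central ideal $K\subseteq L^2$ (say $K=\langle x_6\rangle$); then $\mathcal M(K)=0$, $\dim(L^2\cap K)=1$, and $(L/K)^{ab}=L^{ab}$ has dimension $2$, so $\dim((L/K)^{ab}\otimes K)=2$, and Theorem \ref{th1.2} gives $\dim\mathcal M(L)\le\dim\mathcal M(L/K)+1$. Here $L/K$ is $5$-dimensional with $\dim(L/K)^2=3$ and, being necessarily indecomposable, is one of $L_{5,6},L_{5,7},L_{5,9}$, each with $\dim\mathcal M=3$ by Table \ref{ta6}; hence $\dim\mathcal M(L)\le4$ throughout, matching the largest value in the table.

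The principal obstacle is the exterior-square computation itself, in particular separating the three outcomes: $\dim\mathcal M=2$ for $L_{6,14},L_{6,16}$, $\dim\mathcal M=3$ for $L_{6,15},L_{6,17},L_{6,18}$, and $\dim\mathcal M=4$ for $L_{6,21}(\varepsilon)$. The delicate part is tracking exactly which products $x_i\wedge x_j$ become dependent once the multiplication relations are enforced; the filiform algebra $L_{6,18}$ and especially the family $L_{6,21}(\varepsilon)$ demand care, the latter because the value must be shown to be independent of the parameter $\varepsilon$. This is precisely the type of calculation performed in the template \cite[Lemma 2.5]{s4}, which I would carry out case by case.
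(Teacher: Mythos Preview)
Your proposal is correct and is essentially the same approach as the paper's: the paper's proof consists of the single sentence that the result is obtained by the same argument as in \cite[Lemma~2.5]{s4}, and you have simply unpacked what that argument entails (the Hardy--Stitzinger/exterior-square computation on each algebra of Table~\ref{ta61}), adding the upper bounds from Theorems~\ref{th1.1} and~\ref{th1.2} as a sanity check. There is nothing to correct.
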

\begin{proof}
The proof is obtained by a similar argument as in   the proof of \cite[Lemma 2.5]{s4}. 
\end{proof}
\section{the structures of a nilpotent Lie algebra  $ L $  with $ s(L)=6 $}
In this section, we determine the structures of a all nilpotent Lie algebras $ L $ when $ s(L)=6. $ First, we begin by the following propositions which are needed in the proof of the next theorems. 
\begin{prop}\label{pr2.1}
There is no  $ n $-dimensional nilpotent Lie algebra with $ s(L)=6, $ when 
\begin{itemize}
\item[(i).] $ \dim L^{2}\geq 5, $
\item[(ii).] $  \dim L^{2}=1. $
\end{itemize}
\end{prop}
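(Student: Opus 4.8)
The plan is to treat the two cases separately, since they live in very different regimes: case (i) is a pure inequality argument driven by the upper bound on the multiplier, while case (ii) must go through the explicit structure theory, because the multiplier bound degenerates when $\dim L^2=1$.

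For part (i), I would feed Theorem \ref{th1.1} directly into the definition of $s(L)$. Writing $m=\dim L^{2}$, Theorem \ref{th1.1} gives $\dim\mathcal{M}(L)\leq \tfrac12(n+m-2)(n-m-1)+1$, so that $s(L)=\tfrac12(n-1)(n-2)+1-\dim\mathcal{M}(L)\geq \tfrac12\big[(n-1)(n-2)-(n+m-2)(n-m-1)\big]$. The key computation is to expand the bracket: one finds $(n-1)(n-2)-(n+m-2)(n-m-1)=m(m-1)$, so the dependence on $n$ cancels completely and $s(L)\geq \tfrac12 m(m-1)$. For $m=5$ this already forces $s(L)\geq 10$, and since $\tfrac12 m(m-1)$ is increasing in $m$ we get $s(L)\geq 10>6$ for every $m\geq 5$. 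Hence no algebra with $\dim L^2\geq 5$ can have $s(L)=6$. The only "obstacle" here is recognizing that the two quadratics differ by exactly $m(m-1)$, which is what makes the argument dimension-free and uniform in $m$.

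For part (ii) the multiplier bound is useless, since at $m=1$ the identity above only yields $s(L)\geq 0$. Instead I would invoke Lemma \ref{l1.2}: every $L$ with $\dim L^{2}=1$ is isomorphic to $H(m)\oplus A(n-2m-1)$ for some $m\geq 1$. It then remains to read off the value of $s$ for this family. By Theorem \ref{s}(i) the case $m=1$ gives $L\cong H(1)\oplus A(n-3)$ with $s(L)=0$, while by Theorem \ref{s}(iii) every $H(m)\oplus A(n-2m-1)$ with $m\geq 2$ has $s(L)=2$. If one prefers a self-contained verification rather than a citation, the same conclusion follows from a direct computation using the decomposition $\mathcal{M}(A\oplus B)\cong \mathcal{M}(A)\oplus\mathcal{M}(B)\oplus (A^{ab}\otimes B^{ab})$ together with the known value $\dim\mathcal{M}(H(m))=2m^{2}-m-1$ for $m\geq 2$; substituting $n=2m+1+k$ makes all $m$- and $k$-dependent terms cancel and leaves $s(L)=2$. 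In every case $s(L)\in\{0,2\}$, so $s(L)=6$ is impossible, which completes part (ii).

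The genuinely delicate point is not any single calculation but the structural input in case (ii): unlike case (i), it cannot be settled by the inequality of Theorem \ref{th1.1} alone and relies essentially on the classification of derived-subalgebra-one Lie algebras in Lemma \ref{l1.2}. Once that reduction is in place, both parts close immediately.
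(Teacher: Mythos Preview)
Your proposal is correct and follows essentially the same approach as the paper, which simply refers to the analogous \cite[Proposition 2.4]{s4}: the bound of Theorem \ref{th1.1} yielding $s(L)\geq \tfrac12 m(m-1)$ for case (i), and the structure result of Lemma \ref{l1.2} together with the known values $s(H(1)\oplus A(n-3))=0$ and $s(H(m)\oplus A(n-2m-1))=2$ for $m\geq 2$ for case (ii). Your explicit verification that $(n-1)(n-2)-(n+m-2)(n-m-1)=m(m-1)$ and the direct multiplier computation for the Heisenberg-plus-abelian family are exactly what that cited argument unpacks to.
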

\begin{proof}
The proof is obtained in the same way as the proof  \cite[Proposition 2.4]{s4}.
\end{proof}
\begin{prop}\label{pr2.3}
Let $ L $ be a non-capable nilpotent Lie algebra of dimension at most $ 7 $ with $ \dim L^{2}=2. $ Then $ s(L), $ the structure and   the Schur multiplier of $ L $   are given in the following table.
\begin{longtable}{ccc}
\caption{}\\
\hline \multicolumn{1}{c}{\textsf{Name}} &\multicolumn{1}{c}{\textbf{$\dim \mathcal{M}(L)$}} & \multicolumn{1}{c}{\textsf{$s(L)$}}      \\
\hline
\endhead
\hline \multicolumn{3}{r}{\small \itshape continued on the next page}
\endfoot
\endlastfoot
$ L_{6,10} $ & $ 6 $  & $ 5 $ \\
\\
$ 27A,  $  $ L_{6,10}\oplus A(1), $ $ 157 $ & $ 10 $ & $ 6$ \\
\\
\hline
  \label{ta8}
\end{longtable} 
\end{prop}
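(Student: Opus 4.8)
The plan is to turn the statement into a finite verification driven by the existing classification. By Tables~\ref{ta1} and~\ref{ta2}, every nilpotent Lie algebra of dimension at most $7$ with $\dim L^{2}=2$ is isomorphic to one of the finitely many algebras listed there, so it suffices to run through that list. Since the hypothesis singles out the \emph{non-capable} algebras, the proof splits into two tasks: decide, for each algebra on the list, whether $Z^{*}(L)=Z^{\wedge}(L)$ is nonzero, and then compute $\dim\mathcal{M}(L)$ for those that survive, reading off $s(L)=\tfrac12(n-1)(n-2)+1-\dim\mathcal{M}(L)$.

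For the capability test I would proceed in three layers. First, the contrapositive of Lemma~\ref{l1.1} shows that any algebra on the list with $s(L)\le n-3$ is automatically capable; via Theorem~\ref{s} this immediately settles $L_{5,8}$ ($s=1\le 2$) and $L_{6,22}(\varepsilon)$ ($s=3= n-3$) as capable. Second, most remaining entries carry an abelian direct summand (e.g.\ $L_{6,8}=L_{5,8}\oplus A(1)$, $L_{6,5}=L_{5,5}\oplus A(1)$, $L_{5,3}=L_{4,3}\oplus A(1)$), and for such algebras one has the identity $Z^{\wedge}(L\oplus A(1))=Z^{\wedge}(L)\cap L^{2}$; this reduces the whole list to the indecomposable cores $L_{4,3},L_{5,5},L_{5,8},L_{6,10},L_{6,22}(\varepsilon),27A,27B,157$, leaving only $L_{4,3},L_{5,5},L_{6,10},27A,27B,157$ still to be examined. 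Third, for these I would compute the exterior centre directly from the brackets, i.e.\ determine all $z$ with $z\wedge x=0$ in $L\wedge L$ for every $x$. The expected outcome is that $L_{4,3}$, $L_{5,5}$ and $27B$ are capable, while $L_{6,10}$, $27A$ and $157$ are not — each of the latter being a central product $A\dotplus H(1)$ whose amalgamated one-dimensional centre lies in the epicentre. The displayed identity then promotes $L_{6,10}$ to the non-capable $L_{6,10}\oplus A(1)$, since $Z^{\wedge}(L_{6,10})\cap L_{6,10}^{2}\neq 0$, while confirming that the siblings $L_{6,8}\oplus A(1)$, $L_{6,22}(\varepsilon)\oplus A(1)$, etc., stay capable.

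It then remains to compute the four multipliers. For the decomposable survivor I would use the Künneth-type identity $\dim\mathcal{M}(L\oplus A(1))=\dim\mathcal{M}(L)+\dim L^{ab}$, which together with $\dim\mathcal{M}(L_{6,10})=6$ (Theorem~\ref{s}(vi)) gives $\dim\mathcal{M}(L_{6,10}\oplus A(1))=6+4=10$. For the indecomposable survivors $27A$ and $157$ there is no such reduction, so I would compute the exterior square $L\wedge L$ explicitly from the defining brackets and use the exact sequence $0\to\mathcal{M}(L)\to L\wedge L\to L^{2}\to 0$, i.e.\ $\dim\mathcal{M}(L)=\dim(L\wedge L)-\dim L^{2}$. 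Here Theorem~\ref{th1.2}, applied to a central ideal $K\subseteq Z(L)\cap L^{2}$ of dimension $1$ chosen so that $\dim(L/K)^{2}=1$ (whence Lemma~\ref{l1.2} supplies $\dim\mathcal{M}(L/K)$), only provides an a-priori upper bound. Both computations should return $\dim\mathcal{M}=10$, so $s=\tfrac12(6)(5)+1-10=6$, while $L_{6,10}$ gives $s=11-6=5$, which assembles the table.

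The main obstacle is exactly this last step for $27A$ and $157$. Neither decomposes, and the crude estimates — the bound $15$ from Theorem~\ref{th1.1} and the bound obtained from Theorem~\ref{th1.2} — are both too weak to pin the value. Because of the central-product structure the relation bookkeeping in $L\wedge L$ is delicate, so the precise value $10$ has to be extracted from an explicit basis of the exterior square rather than from any reduction, and the same explicit computation is what distinguishes the non-capable $27A$ from its capable sibling $27B$.
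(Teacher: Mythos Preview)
Your proposal is correct and would succeed, but it takes a noticeably different route from the paper's own argument. The paper does not compute anything by hand here: it simply quotes existing classification results. Specifically, \cite[Theorem~1.1]{P} is invoked to single out $L_{6,10}$ as the unique non-capable algebra of dimension $\le 6$ with $\dim L^{2}=2$; \cite[Proposition~3.1]{J} handles the decomposable $7$-dimensional cases (yielding only $L_{6,10}\oplus A(1)$ as non-capable); and \cite[Proposition~2.10, Lemma~2.11]{J1} settle both the capability and the multiplier dimensions of $27A$, $27B$ and $157$ in one stroke. The multiplier of $L_{6,10}\oplus A(1)$ is read off from \cite[Theorem~1]{B1}. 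In short, the paper's proof is a literature lookup.

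Your approach, by contrast, is internally self-contained: you filter for capability via the contrapositive of Lemma~\ref{l1.1}, the direct-sum behaviour of $Z^{\wedge}$, and the central-product observation $L_{6,10}\cong L_{4,3}\dotplus H(1)$, $27A\cong L_{5,8}\dotplus H(1)$, $157\cong L_{5,5}\dotplus H(1)$ (the amalgamated $H(1)$-centre lying in the epicentre); then you propose explicit exterior-square computations for the multipliers of $27A$ and $157$. This is more laborious but has the merit of not depending on \cite{P,J,J1}. One small remark: your displayed identity $Z^{\wedge}(L\oplus A(1))=Z^{\wedge}(L)\cap L^{2}$ is really just $Z^{\wedge}(L\oplus A(1))=Z^{\wedge}(L)$, since $Z^{\wedge}(L)\subseteq L^{2}$ for nilpotent non-abelian $L$ (cf.\ \cite[Corollary~2.3]{J4}); you might phrase it that way to avoid confusion. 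The ``main obstacle'' you flag for $27A$ and $157$ is real but routine: the Hardy--Stitzinger bookkeeping used elsewhere in the paper (e.g.\ in the proofs of Propositions~\ref{pr3.5} and~\ref{pr3.7}) carries it out in a page each.
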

\begin{proof}
By using \cite[Theorem 1.1]{P}, $ L_{6,10} $ is only non-capable Lie algebra with derived subalgebra of dimension $ 2 $ such that $ n\leq 6. $ From \cite[Proposition 3.1]{P}, we have $ \dim \mathcal{M}(L_{6,10})=6 $ and so $ s(L_{6,10})=5. $ Also, by using Table \ref{ta2}, we have  $ L_{6,3}\oplus A(1), $ $ L_{6,5}\oplus A(1), $ $ L_{6,8}\oplus A(1), $ $ L_{6,22}(\varepsilon)\oplus A(1) $ and  $ L_{6,10}\oplus A(1) $ are  decomposable $ 7 $-dimensional nilpotent Lie algebra  and only  $ L_{6,10}\oplus A(1) $ is non-capable by using \cite[Proposition 3.1]{J}. From \cite[Theorem 1]{B1}, $ \dim \mathcal{M}(L_{6,10}\oplus A(1))=10 $ and so $ s(L_{6,10}\oplus A(1))=6. $\\
From Table \ref{ta2},  we know that $ 27A, $ $ 27B $ and $ 157 $ are $ 7 $-dimensional Lie algebra  with derived subalgebra of dimension $ 2. $ On the other hand, \cite[Proposition 2.10 and Lemma 2.11]{J1} show  $ 27A $ and $ 27B $ are non-capable and capable, respectively and $ \dim \mathcal{M}(27A)=10 $ and $ \dim \mathcal{M}(27B)=9, $ so  $s(27A)=6$ and $  s(27B)=7. $  By a similar way and using \cite[Proposition 2.10 and Lemma 2.11]{J1},  the Lie algebra $ 157 $ is non-capable and $ \dim \mathcal{M}(157)=10,$ so $ s(157)=6. $
Therefore $ L $ is isomorphic to one of the Lie algebras $ L_{6,10}, $  $ L_{6,10}\oplus A(1), $ $ 27A$ or $157.$
\end{proof}
\begin{thm}\label{th2.4}
Let $ L $ be an $ n $-dimensional nilpotent Lie algebra with $ s(L)=6 $ and $ \dim L^{2}=2. $ Then $ L $ is isomorphic to one of the nilpotent Lie algebras  $L_{5,8} \oplus A(5), L_{4,3}\oplus A(4),$    $L_{5,5}\oplus A(3)$,  $L_{6,22}(\varepsilon)\oplus A(3)$ for all $ \varepsilon \in \mathbb{F}/(\stackrel{*}{\sim}), $  $ L_{6.10}\oplus A(1), $  $ 27A $ or $ 157. $
\end{thm}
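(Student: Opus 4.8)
The plan is to strip off a maximal abelian direct factor and reduce everything to a stem algebra of the same derived dimension. Write $L=H\oplus A(k)$, where $A(k)$ is a maximal abelian direct summand and $H$ has no abelian direct factor. Since a nilpotent Lie algebra has no abelian direct factor exactly when $Z(L)\subseteq L^{2}$, the algebra $H$ satisfies $Z(H)\subseteq H^{2}$, and because $A(k)$ is abelian we still have $\dim H^{2}=\dim L^{2}=2$. Using the standard decomposition $\mathcal{M}(A\oplus B)\cong \mathcal{M}(A)\oplus\mathcal{M}(B)\oplus(A^{ab}\otimes B^{ab})$ together with $\dim\mathcal{M}(A(k))=\binom{k}{2}$ and $\dim(H^{ab}\otimes A(k)^{ab})=(\dim H-2)k$, a direct computation from the definition of $s$ collapses to the clean identity
\[
s(L)=s(H)+k.
\]
Thus I must find every stem algebra $H$ with $\dim H^{2}=2$ and $s(H)=6-k$, for each $k\ge 0$.

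For $k\ge 1$ we have $s(H)\le 5$, so $H$ is one of the algebras listed in Theorem \ref{s}, and it only remains to retain those that are stem with $\dim H^{2}=2$. Reading off the derived dimensions from Table \ref{ta1} and Table \ref{ta2}, the surviving candidates are $L_{5,8}$ for $s(H)=1$ (so $k=5$), $L_{4,3}$ for $s(H)=2$ (so $k=4$), $L_{5,5}$ and $L_{6,22}(\varepsilon)$ for $s(H)=3$ (so $k=3$), and $L_{6,10}$ for $s(H)=5$ (so $k=1$); the values $s(H)=0$ and $s(H)=4$ contribute nothing, since every stem algebra in Theorem \ref{s} with those $s$-values has $\dim H^{2}\neq 2$. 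This produces $L_{5,8}\oplus A(5)$, $L_{4,3}\oplus A(4)$, $L_{5,5}\oplus A(3)$, $L_{6,22}(\varepsilon)\oplus A(3)$ and $L_{6,10}\oplus A(1)$, which is the bulk of the list.

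The remaining and decisive case is $k=0$, that is, $L$ itself stem with $\dim L^{2}=2$ and $s(L)=6$. Here I would split on capability. If $L$ is non-capable, Lemma \ref{l1.1} forces $n-3<6$, hence $n\le 8$; for $n\le 7$ Proposition \ref{pr2.3} lists the non-capable algebras with $\dim L^{2}=2$ and $s(L)=6$, whose stem members are precisely $27A$ and $157$ (the remaining entry $L_{6,10}\oplus A(1)$ is not stem). If $L$ is capable, I would use the classification of nilpotent Lie algebras with $\dim L^{2}=2$: every stem such algebra has dimension at most $7$, and running through the list $L_{4,3},L_{5,5},L_{5,8},L_{6,22}(\varepsilon),27B$ gives $s$-values $2,3,1,3,7$, none equal to $6$. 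The main obstacle is exactly this dimension control in the $k=0$ case — closing the gap between the bound $n\le 8$ from Lemma \ref{l1.1} and the dimension $n\le 7$ covered by Proposition \ref{pr2.3}, i.e. ruling out stem algebras with two-dimensional derived subalgebra in dimension $n\ge 8$. This is where the structural analysis of the pencil of commutator forms (Kronecker normal form), or equivalently the tables of \cite{G, M, P}, carries the real weight; once that bound is in hand, assembling the contributions over all $k$ yields exactly the seven algebras $L_{5,8}\oplus A(5)$, $L_{4,3}\oplus A(4)$, $L_{5,5}\oplus A(3)$, $L_{6,22}(\varepsilon)\oplus A(3)$, $L_{6,10}\oplus A(1)$, $27A$ and $157$ of the statement.
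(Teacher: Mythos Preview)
Your reduction via the identity $s(H\oplus A(k))=s(H)+k$ is correct and yields a genuinely different organization from the paper's. The paper does not strip off abelian factors; it splits directly on capability of $L$ itself. For capable $L$ it appeals to the argument of \cite[Theorem 2.6]{s4} (resting, as you do, on the classification of capable nilpotent algebras with two-dimensional derived subalgebra) and obtains the four families $L_{5,8}\oplus A(5)$, $L_{4,3}\oplus A(4)$, $L_{5,5}\oplus A(3)$, $L_{6,22}(\varepsilon)\oplus A(3)$. Your scan of Theorem~\ref{s} for $s(H)\le 5$ recovers these and also catches $L_{6,10}\oplus A(1)$ at $s(H)=5$; the paper instead picks that algebra up in the non-capable branch alongside $27A$ and $157$. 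Your route is arguably tidier for the bulk of the list, since it reads everything off a single known table.

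You correctly isolate the one real difficulty: the stem non-capable case with $n=8$. The paper, however, does not close this through Kronecker pencils or classification tables, but with a short exterior-square argument. For $L$ non-capable one has $0\neq Z^{\wedge}(L)\subseteq L^{2}$, so $L/Z^{\wedge}(L)$ is a capable algebra of dimension $6$ or $7$ whose derived subalgebra has dimension at most $1$; capability then forces it to be $A(6)$ or $H(1)\oplus A(4)$. Since $L\wedge L\cong (L/Z^{\wedge}(L))\wedge(L/Z^{\wedge}(L))$ by \cite[Corollary~2.3]{N1}, one gets $\dim(L\wedge L)\in\{15,17\}$, and then $\dim\mathcal{M}(L)=\dim(L\wedge L)-\dim L^{2}$ from \cite[Theorem~35(iii)]{E} gives $s(L)\in\{9,7\}$. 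This rules out $s(L)=6$ at $n=8$ with no case analysis at all, and plugs directly into your framework to finish the proof.
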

\begin{proof}
If $ L $  is a capable Lie algebra, then the proof is similar to \cite[Theorem 2.6]{s4} and $ L $ is isomorphic to one of the nilpotent Lie algebras  $L_{5,8} \oplus A(5), L_{4,3}\oplus A(4)$, $L_{5,5}\oplus A(3)$ or  $L_{6,22}(\varepsilon)\oplus A(3)$ for all $\varepsilon \in  \mathbb{F}/(\stackrel{*}{\sim}). $ 
\\
If $ L $  is a non-capable Lie algebra, then $ n\leq 8 $ by using Lemma \ref{l1.1}. Let $ n\leq 7. $ Then  $ L $ is isomorphic to one of the non-capable Lie algebras  $ 27A, $ $ L_{6,10}\oplus A(1) $ or $ 157 $  by using  Proposition \ref{pr2.3}. Let  now $ n=8. $  Since $ Z^{\wedge}(L)\subseteq L_{2} $ and $ n=8, $ $ L/Z^{\wedge}(L) $ is isomorphic to $ A(6) $ or $ H(1)\oplus A(4). $ On the other hand,  $ L\wedge L\cong  L/Z^{\wedge}(L) \wedge L/Z^{\wedge}(L) $ by using \cite[Corollary 2.3]{N1}. Thus $ \dim L\wedge L $ is equal to $ 15 $ or $ 17 $  and so $ \mathcal{M}(L) $ is isomorphic to  $ A(13) $ or $ A(15) $ by using \cite[Theorem 35 (iii)]{E}. Hence $ s(L)=9 $ or $ s(L)=7, $ respectively. Therefore there is no $ 8 $-dimensional non-capable nilpotent Lie algebra with $ \dim L^{2}=2 $ and $ s(L)=6. $
\end{proof}
Recall that a Lie algebra $ L $ is called generalized Heisenberg of rank $ n $ if $ L^{2}=Z(L) $  and $ \dim L^{2}=n. $ 
\begin{lem}\label{l2.3}
Let $ L $ be an $ n $-dimensional generalized Heisenberg of rank $ 3 $ with $ s(L)=6. $ Then $ n\leq 7. $
\end{lem}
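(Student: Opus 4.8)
The plan is to translate the condition $s(L)=6$ into a rank condition on a single linear map and then bound $d:=\dim L^{\mathrm{ab}}=n-3$. Since $L$ is generalized Heisenberg of rank $3$ we have $L^{2}=Z(L)$ with $\dim L^{2}=3$, and as $L^{2}$ is spanned by commutators the map $\theta\colon L^{\mathrm{ab}}\wedge L^{\mathrm{ab}}\to L^{2}$ is onto; hence $\binom{d}{2}\ge 3$, so $d\ge 3$ and $n\ge 6$. The whole point is to prove $d\le 4$.

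First I would compute $\dim\mathcal M(L)$ from the $2$-step structure. Fix lifts $e_{1},\dots,e_{d}$ of a basis of $L^{\mathrm{ab}}$ and a basis $f_{1},f_{2},f_{3}$ of $L^{2}$. Because $L^{2}=Z(L)$, the relation $[x,y]\wedge z=x\wedge[y,z]-y\wedge[x,z]$ in $L\wedge L$ (see \cite{E}) forces $L^{2}\wedge L^{2}=0$ and shows that the commutator map $\kappa\colon L\wedge L\to L^{2}$ annihilates every $e_{i}\wedge f_{k}$, while on the span of the $e_{i}\wedge e_{j}$ it is exactly $\theta$. Using the exact sequence $0\to\mathcal M(L)\to L\wedge L\xrightarrow{\kappa}L^{2}\to 0$ one then reads off
\[
\mathcal M(L)\cong\ker\theta\ \oplus\ \mathrm{coker}\,\partial,\qquad \partial\colon\Lambda^{3}L^{\mathrm{ab}}\to L^{\mathrm{ab}}\otimes L^{2},
\]
where $\partial$ is the simultaneous contraction of the three alternating forms $\theta_{1},\theta_{2},\theta_{3}$ defining $\theta$, namely $e_{p}\wedge e_{q}\wedge e_{i}\mapsto\sum_{k}\bigl(\theta_{k}(e_{q},e_{i})e_{p}-\theta_{k}(e_{p},e_{i})e_{q}+\theta_{k}(e_{p},e_{q})e_{i}\bigr)\otimes f_{k}$. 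Writing $r=\mathrm{rank}\,\partial$ and using $\dim\ker\theta=\binom{d}{2}-3$ together with $\dim\mathrm{coker}\,\partial=3d-r$, the identity $s(L)=\tfrac12(n-1)(n-2)+1-\dim\mathcal M(L)$ collapses to the clean formula $s(L)=r-d+5$. Since $r\le\dim\Lambda^{3}L^{\mathrm{ab}}=\binom{d}{3}$, for $d=3$ we get $s(L)\le 3$ and for $d=4$ we get $s(L)\le 5$; therefore $s(L)=6$ already forces $d\ge 5$, and in that range $s(L)=6$ is equivalent to $r=d+1$.

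It remains to exclude $d\ge 5$. If $L$ is non-capable then $Z^{\wedge}(L)=Z^{*}(L)\ne 0$, and by Lemma \ref{l1.1} we get $n-3<s(L)=6$, so $n\le 8$, i.e. $d=5$. In this case $Z^{\wedge}(L)\subseteq Z(L)=L^{2}$ and $L\wedge L\cong(L/Z^{\wedge}(L))\wedge(L/Z^{\wedge}(L))$ by \cite[Corollary 2.3]{N1}; applying Theorem \ref{th1.1} to $L/Z^{\wedge}(L)$, whose derived algebra has dimension $3-\dim Z^{\wedge}(L)\le 2$, bounds $\dim(L\wedge L)=\dim\mathcal M(L)+3$ and in each case $\dim Z^{\wedge}(L)\in\{1,2,3\}$ yields $s(L)\ge 8$, a contradiction. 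Thus for $d\ge 5$ we may assume $L$ is capable, i.e. $Z^{\wedge}(L)=0$, which in the present language says that no nonzero $z\in L^{2}$ satisfies $L^{\mathrm{ab}}\otimes\langle z\rangle\subseteq\mathrm{im}\,\partial$.

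The capable case with $d\ge 5$ is the main obstacle: I must rule out $r=d+1$. The non-degeneracy hypothesis $L^{2}=Z(L)$ is exactly that the three forms $\theta_{1},\theta_{2},\theta_{3}$ have no common kernel vector, which forces $\sum_{k}\mathrm{rank}\,\theta_{k}\ge d$; and each component contraction $\partial_{k}\colon\Lambda^{3}L^{\mathrm{ab}}\to L^{\mathrm{ab}}$ has rank controlled by $\mathrm{rank}\,\theta_{k}$ (for instance, in the verification $37A$ each $\theta_{k}$ has rank $2$ and the three contractions combine to $r=3=d-1$, giving $s=4$, in agreement with the tables). I would therefore try to show that once $d\ge 5$ these three contributions cannot overlap enough to leave $r$ as small as $d+1$, and in fact force $r\ge d+2$ (hence $s(L)\ge 7$). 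The delicate point is that the bound coming directly from Theorem \ref{th1.1} only gives $r\ge d-2$, which is too weak; sharpening it to $r\ge d+2$ for $d\ge 5$ will require a careful normal form for the net $\langle\theta_{1},\theta_{2},\theta_{3}\rangle$ of alternating forms and a precise count of $\dim(\mathrm{im}\,\partial_{1}+\mathrm{im}\,\partial_{2}+\mathrm{im}\,\partial_{3})$, and this is where I expect the real work to lie.
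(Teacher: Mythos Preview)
Your reduction to the identity $s(L)=r-d+5$, with $r=\operatorname{rank}\partial$, is correct (and checks against $L_{6,26}$ and $37A$), and the non-capable subcase for $d\ge 5$ is handled cleanly via Lemma~\ref{l1.1} together with $L\wedge L\cong(L/Z^{\wedge}(L))\wedge(L/Z^{\wedge}(L))$. But there is a genuine gap: in the capable case with $d\ge 5$ you must show $r\ge d+2$, and you explicitly leave this open, writing that ``this is where I expect the real work to lie''. The observations you list --- that $\sum_k\operatorname{rank}\theta_k\ge d$ and that each $\partial_k$ is controlled by $\operatorname{rank}\theta_k$ --- do not by themselves force $r>d+1$; a normal-form analysis of a $3$-dimensional net of alternating forms on a $d$-dimensional space is a nontrivial classification problem, and nothing in the proposal carries it out. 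As it stands, the lemma is not proved.

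The paper does not spell out its own proof here, deferring to \cite[Lemma~2.8]{s5}, but the method used there and throughout \cite{s4,s5} is quite different from yours and sidesteps the linear-algebra obstacle entirely. One picks a one-dimensional ideal $I\subseteq Z(L)\cap L^{2}$, applies Theorem~\ref{th1.2} to obtain $s(L/I)\le 4$, notes that $L/I$ has nilpotency class at most $2$ with $\dim(L/I)^{2}=2$, and then invokes the already-established classification (Theorem~\ref{s}) of such algebras with $s\le 4$ to bound $\dim(L/I)$ and hence $n$. Your route, if the rank bound could be proved, would actually yield a sharper conclusion (your computation $s(L)\le 5$ for $d\le 4$ shows the hypothesis $s(L)=6$ is in fact vacuous for generalized Heisenberg algebras of rank $3$); but the inductive argument via Theorem~\ref{th1.2} is what the reference intends, and it finishes without any case analysis on alternating forms.
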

\begin{proof}
The proof is  obtained  by a modification of  the  proof of \cite[Lemma 2.8]{s5}.
\end{proof}
\begin{thm}\label{th2.5}
Let $ L $ be an $ n $-dimensional nilpotent Lie algebra with $ s(L)=6 $ and $ \dim L^{2}=3. $ Then $ L $ is isomorphic to one of the nilpotent Lie algebras  $ 37A\oplus A(1), $ $ L_{6,6}, $ $ L_{6,7}, $ $ L_{6,9}, $ $ L_{6,11}, $ $ L_{6,12}, $ $    L_{6,19}(\varepsilon)$  for all $ \varepsilon \in \mathbb{F^{*}}/(\stackrel{*}{\sim}), $ $ L_{6,20} $ or $ L_{6,24}(\varepsilon)$ for all $ \varepsilon \in \mathbb{F}/(\stackrel{*}{\sim}). $
\end{thm}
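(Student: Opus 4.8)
The plan is to separate the algebras carrying an abelian direct factor from the \emph{stem} algebras (those with $Z(L)\subseteq L^{2}$, equivalently with no abelian direct summand), treating the former by a recursion on $s$ and the latter by a genuine dimension bound. The starting observation is that, by the direct-sum formula $\mathcal{M}(A\oplus A(1))\cong\mathcal{M}(A)\oplus A^{ab}$ of \cite[Theorem 1]{B1} together with the definition of $s$, one has $s(L\oplus A(1))=s(L)+(\dim L^{2}-1)$. Writing $L=K\oplus A(d)$ with $K$ stem and $\dim K^{2}=\dim L^{2}=3$, this yields $s(L)=s(K)+2d$, so $s(L)=6$ forces $(s(K),d)\in\{(6,0),(4,1),(2,2),(0,3)\}$. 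By Theorem~\ref{s}(i),(iii) every Lie algebra with $s\in\{0,2\}$ has $\dim L^{2}\le 2$, so $d=2$ and $d=3$ are impossible once $\dim K^{2}=3$.

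For $d=1$ I would read the stem algebras $K$ with $\dim K^{2}=3$ and $s(K)=4$ off the complete $s=4$ list in Theorem~\ref{s}(v), discarding those with an abelian factor or with $\dim K^{2}\ne 3$; exactly $L_{5,6},L_{5,7},L_{5,9}$ and $37A$ survive. Comparing multiplication tables gives $L_{5,6}\oplus A(1)\cong L_{6,6}$, $L_{5,7}\oplus A(1)\cong L_{6,7}$ and $L_{5,9}\oplus A(1)\cong L_{6,9}$, so the decomposable solutions are precisely $L_{6,6},L_{6,7},L_{6,9}$ and $37A\oplus A(1)$.

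The core case is $d=0$: I must show a stem $L$ with $\dim L^{2}=3$ and $s(L)=6$ is one of $L_{6,11},L_{6,12},L_{6,19}(\varepsilon),L_{6,20},L_{6,24}(\varepsilon)$, all of dimension $6$. First I would bound $n$. If $L$ is non-capable, Lemma~\ref{l1.1} gives $n\le 8$; if $L^{2}=Z(L)$, Lemma~\ref{l2.3} gives $n\le 7$. In the remaining capable case $Z(L)\subsetneq L^{2}$, so I pick a one-dimensional central ideal $N\subseteq Z(L)\subseteq L^{2}$ and apply Theorem~\ref{th1.2} with $K=N$: since $\dim(L^{2}\cap N)=1$, $\mathcal{M}(N)=0$ and $\dim((L/N)^{ab}\otimes N)=n-3$, the inequality collapses to $s(L/N)\le s(L)-2=4$, where $L/N$ has $\dim(L/N)^{2}=2$ and $\dim(L/N)=n-1$. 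As Theorem~\ref{s} shows every algebra with $\dim L^{2}=2$ and $s\le 4$ has dimension at most $8$, this gives $n\le 9$. With $n$ bounded, $n\le 6$ is settled by reading the stem entries with $s=6$ from Lemma~\ref{11**} (Table~\ref{ta6}), producing exactly the five families, and $n=7$ is settled by Table~\ref{ta7}, where no stem algebra with $\dim L^{2}=3$ attains $s=6$ (the values occurring are $4,5,7,8,9,10$).

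The hard part will be eliminating the high-dimensional stem algebras $n\in\{8,9\}$, which Lemma~\ref{l1.1} does not exclude (it permits $n=8$ and is silent for capable $L$). For the non-capable instances I would imitate the end of the proof of Theorem~\ref{th2.4}: since $Z^{\wedge}(L)\subseteq Z(L)\subseteq L^{2}$, the isomorphism $L\wedge L\cong (L/Z^{\wedge}(L))\wedge (L/Z^{\wedge}(L))$ of \cite[Corollary 2.3]{N1} reduces $\dim\mathcal{M}(L)$, via \cite[Theorem 35]{E}, to a computation on the small quotient $L/Z^{\wedge}(L)$, from which $s(L)\ne 6$ would follow in each configuration. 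For the capable instances I would push the central-quotient inequality further: at $n=9$ the quotient $L/N$ must be the unique $8$-dimensional algebra $L_{5,8}\oplus A(3)$ with $\dim L^{2}=2$ and $s=4$, while at $n=8$ it must be $L_{5,8}\oplus A(2)$ or $L_{6,22}(\varepsilon)\oplus A(1)$; checking by hand that none of the stem one-dimensional central extensions of these has $\dim L^{2}=3$ and $s=6$ would close the argument. Collecting the $d=1$ and $d=0$ solutions then gives the asserted list, with the computed values of $\dim\mathcal{M}$ and $s$ in Lemma~\ref{11**} and Table~\ref{ta7} serving as the computational backbone.
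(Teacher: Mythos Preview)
Your overall architecture matches the paper's: both split on whether $Z(L)\subseteq L^{2}$ (your stem/decomposable dichotomy is exactly this), both invoke Theorem~\ref{th1.2} in the stem case to obtain $s(L/I)\le 4$ and hence $n\le 9$, and both read off the cases $n\le 7$ from Tables~\ref{ta6} and~\ref{ta7}. Your handling of the decomposable case via the recursion $s(L\oplus A(1))=s(L)+(\dim L^{2}-1)$ is actually tidier than the paper's, which instead bounds $\dim\mathcal{M}(L/I)$ through Theorem~\ref{th1.1} and runs three sub-cases; your formula reduces this to a one-line lookup in Theorem~\ref{s}. Conversely, your capable/non-capable/Heisenberg trichotomy in the stem case is unnecessary: the paper simply applies Theorem~\ref{th1.2} uniformly, which already yields $n\le 9$ without distinguishing capability.

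The only substantive divergence is in disposing of $n\in\{8,9\}$. The paper does not enumerate central extensions or compute exterior squares; instead it identifies $L/I$ (necessarily $L_{5,8}\oplus A(2)$, $L_{6,22}(\varepsilon)\oplus A(1)$, or $L_{5,8}\oplus A(3)$), rules out nilpotency class~$2$ via Lemma~\ref{l2.3}, and for class~$3$ invokes Rai's inequality \cite[proof of Theorem~1.1]{R} in the form
\[
\dim\mathcal{M}(L)\le \dim\mathcal{M}(L/\gamma_{3}(L))-\dim\gamma_{3}(L)+\dim\bigl(L/L^{2}\otimes\gamma_{3}(L)\bigr)-(n-m-c),
\]
which immediately gives $16\le 15$. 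Your proposed route---exterior-center arithmetic for the non-capable cases and hand-checking all stem one-dimensional central extensions for the capable ones---would work (indeed, for non-capable $n=8$ one checks $\dim(L\wedge L)\le 17<19$ by Theorem~\ref{th1.1} applied to $L/Z^{\wedge}(L)$), but the ``hand-checking'' for capable $n=8,9$ is a genuine computation that you have not performed, and it is considerably heavier than the two-line Rai bound. Incorporating \cite{R} here would make your argument both complete and shorter.
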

\begin{proof}
If $ Z(L)\not\subseteq L^{2}, $ then there is a central one dimensional ideal $ I $ of $ L $ such that $ I\cap L^{2}=0. $ Since $ \dim (L/I)^{2}=3, $ by using  Theorem \ref{th1.1}, we have 
\begin{align*}
\dim \mathcal{M}(L/I)\leq \frac{1}{2}n(n-5)+1.
\end{align*}
If the equality holds, then
\begin{equation*}
\frac{1}{2}(n-2)(n-3)+1-s(L/I)=\dim \mathcal{M}(L/I)=\frac{1}{2}n(n-5)+1
\end{equation*}
and so $ s(L/I)=3. $ By using Table \ref{ta6}, $ L/I \cong L_{6,26}. $ On the other hand,  $ Z(L)\not\subseteq L^{2} $ and $ I \not\subset L^{2}$ thus $ L\cong L_{6,26}\oplus A(1). $ Hence    $ s( L_{6,26}\oplus A(1))=5$ by using Table \ref{ta7}.  It is a contradiction. Thus
\begin{align*}
\dim \mathcal{M}(L/I)\leq \frac{1}{2}n(n-5).
\end{align*}
If the equality holds, then
\begin{align*}
\frac{1}{2}(n-2)(n-3)+1-s(L/I)=\dim \mathcal{M}(L/I)=\frac{1}{2}n(n-5)
\end{align*}
and $ s(L/I)=4. $ Then $ L/I $ is isomorphic to one of the Lie algebras $ L_{5,6}, $ $ L_{5,7}, $ $ L_{5,9} $ or $ 37A. $ Since  $ Z(L)\not\subseteq L^{2} $  and  $ I\not\subset L^{2}, $ we have  $ L $ is isomorphic to  one of the Lie algebras $ L_{6,6}\cong L_{5,6}\oplus A(1), $ $L_{6,7}\cong L_{5,7}\oplus A(1), $ $ L_{6,9}\cong L_{5,9}\oplus A(1) $ or $ 37A\oplus A(1). $ By using Table \ref{ta6}, we have $s(L_{6,6})=s(L_{6,7})=s(L_{6,9})=6 $ and  looking at Table \ref{ta7} and \cite[Theorem 1]{B1} we have $ s(37A\oplus A(1))=6. $ \\
If $ \dim \mathcal{M}(L/I)\leq \frac{1}{2}n(n-5)-1,  $ then Theorem \ref{th1.1} implies
\begin{equation*}
\dim \mathcal{M}(L)=\frac{1}{2}(n-1)(n-2)-5\leq \frac{1}{2}n(n-5)-1+n-4=\frac{1}{2}(n^{2}-3n)-5
\end{equation*}
which is a contradiction. \\
If $ Z(L)\subseteq L^{2}, $ then there is a one dimensional ideal $ I $ of $ L $ such that $ I\subseteq Z(L)\cap L^{2}. $ Hence by using Theorem \ref{th1.2} we have
\begin{align*}
\frac{1}{2}(n-1)(n-2)-4\leq \frac{1}{2}(n-2)(n-3)+1-s(L/I)+n-3
\end{align*}
and so $ s(L/I)\leq 4. $ Since $ \dim (L/I)^{2}=2, $ by using  Theorem \ref{s} $ L/I$ is isomorphic to one of the Lie algebras  $  L_{5,8}, $ $ L_{4,3}, $ $ L_{5,8}\oplus A(1), $ $ L_{4,3}\oplus A(1), $ $ L_{5,8}\oplus A(2), $  $ L_{5,5}, $ $ L_{6,22}(\varepsilon) $ for all $ \varepsilon \in \mathbb{F}/(\stackrel{*}{\sim}), $   $ L_{4,3}\oplus A(2), $ $ L_{5,8}\oplus A(3), $ $ L_{5,5}\oplus A(1)$ or $ L_{6,22}(\varepsilon)\oplus A(1) $ for all $ \varepsilon \in \mathbb{F}/(\stackrel{*}{\sim}). $  Hence $ n\leq 9. $  \\
If $ n\leq 7, $ then $ L $ is isomorphic to $ L_{6,6}, $ $ L_{6,7}, $ $ L_{6,9}, $ $ L_{6,11}, $ $ L_{6,12}, $ $    L_{6,19}(\varepsilon)$ for all $ \varepsilon \in \mathbb{F^{*}}/(\stackrel{*}{\sim}), $   $ L_{6,20} $ or $ L_{6,24}(\varepsilon) $ for all $ \varepsilon \in \mathbb{F}/(\stackrel{*}{\sim}) $ by looking at Tables \ref{ta6}, \ref{ta7}. \\
Let $ L/I\cong   L_{5,8}\oplus A(2),$ then $ n=8. $ If $ L $ is of nilpotency class $ 2, $ then there is no such a Lie algebra by using Lemma \ref{l2.3}. 
 Let $ L $ is of nilpotency class $ 3 $. Then $ \dim \gamma_{3}(L)=1. $ On the other hand,   $ \mathcal{M}(L_{5,8})=9 $ by using \cite[Lemma 2.10]{P} and $ \mathcal{M}(A(2))=1 $ via  \cite[Lemma 23]{M}. Hence   $ \dim \mathcal{ M}(L/I)=13 $ by invoking \cite[Theorem 1]{B1}. Using \cite[proof of Theorem 1.1]{R}, we have 
  \begin{equation}\label{1}
2=n-m-c \leq\dim \ker \lambda_{3}, ~~\text{and} 
\end{equation}
\begin{equation}\label{2}
\dim \mathcal{M}(L)=\dim \mathcal{M}(L/\gamma_{3}(L))-\dim \gamma_{3}(L)+\dim L/\gamma_{2}(L)\otimes \gamma_{3}(L)-\dim \ker \lambda_{3},
\end{equation}
so
\begin{align*}
16=\dim \mathcal{M}(L) & \leq \dim \mathcal{M}(L/\gamma_{3}(L))-\dim \gamma_{3}(L)+\dim L/\gamma_{2}(L)\otimes \gamma_{3}(L)-2\cr
& \leq 15.
\end{align*}
It is a contradiction. \\
If $ L/I\cong   L_{6,22}(\varepsilon)\oplus A(1),$ then $ n=8$ and  we get  similarly contradiction.   \\
If $ L/I\cong   L_{5,8}\oplus A(3),$ then $ n=9 $ and we have a contradiction. The proof is completed.
\end{proof}
From \cite{M}, a Lie algebra $ L $ is called a stem Lie algebra if   $ Z(L)\subseteq L^{2}. $
\begin{lem}\label{l2.8}
There is no stem $ 7 $-dimensional Lie algebra with $ s(L)=6 $ or $ 7 $  such that $ L/I\cong L_{6,26}, $ $ \dim I=1 $  and $ I\subseteq Z(L). $
\end{lem}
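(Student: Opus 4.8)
The plan is to first pin down the structure of $L$. Since $L$ is a stem algebra, $I\subseteq Z(L)\subseteq L^2$, so $I\subseteq L^2$, and the identification $L/I\cong L_{6,26}$ then forces $\dim L^2=4$ (as $\dim(L/I)^2=3$ and $I\subseteq L^2$). Because $L_{6,26}$ has class $2$ we get $\gamma_3(L)\subseteq I$; and $L$ cannot itself have class $2$, since a class‑$2$ algebra on $\dim L^{ab}=7-4=3$ generators has $\dim L^2\le\binom{3}{2}=3<4$. Hence $L$ has class exactly $3$ with $\gamma_3(L)=I$ of dimension $1$, it is $3$‑generated, and $L/\gamma_3(L)\cong L_{6,26}$ is the free $2$‑step nilpotent Lie algebra on $3$ generators. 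I would also record that every such central extension is automatically stem: no nonzero element of $L^{ab}$ can be central, since the brackets of the generators are nonzero and independent in $L_{6,26}$, so $Z(L)\subseteq L^2$ comes for free and the stem hypothesis is no extra restriction.

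Next I would feed this into the formula from \cite{R} already used in the proof of Theorem \ref{th2.5}, namely equation \eqref{2} with $c=3$. Here $\dim\mathcal{M}(L/\gamma_3(L))=\dim\mathcal{M}(L_{6,26})=8$ by Table \ref{ta6}, $\dim\gamma_3(L)=1$, and $\dim\big(L/\gamma_2(L)\otimes\gamma_3(L)\big)=3\cdot 1=3$, so
\[
\dim\mathcal{M}(L)=8-1+3-\dim\ker\lambda_3=10-\dim\ker\lambda_3 ,
\]
and therefore $s(L)=16-\dim\mathcal{M}(L)=6+\dim\ker\lambda_3$. Thus the assertion ``$s(L)\ne 6,7$'' is \emph{exactly} the claim $\dim\ker\lambda_3\ge 2$, equivalently $\dim\operatorname{im}\lambda_3\le 1$, where $\operatorname{im}\lambda_3$ is the Ganea image $\langle\,\overline{[x,z]}:x\in L\,\rangle$ living in the weight‑$4$ part of $\mathcal{M}(L)$ and $z$ spans $\gamma_3(L)$.

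The heart of the proof is the computation of $\operatorname{im}\lambda_3$. Writing $L=F/R$ with $F$ free on $a,b,c$, the subspace $R_3$ is a hyperplane in the $8$‑dimensional $F_3$, while $R_4=F_4$, so the graded Hopf formula $\mathcal{M}(L)=(R\cap F^2)/[F,R]$ gives $\dim\mathcal{M}(L)=\dim R_3+\dim\big(F_4/[F_1,R_3]\big)=7+\dim\operatorname{im}\lambda_3$, with $\operatorname{im}\lambda_3=F_4/[F_1,R_3]$ spanned by the classes of $[a,z],[b,z],[c,z]$. Taking the representative $R_3=\langle u_1,\dots,u_7\rangle$ (all weight‑$3$ Hall commutators except $u_8=[[b,c],c]$, so $z=u_8$, and with $u_3=[[a,b],c]$, $u_5=[[a,c],b]$, $u_6=[[a,c],c]$, $u_7=[[b,c],b]$), the Jacobi identity gives $[b,u_8]=[c,u_7]\in[F_1,R_3]$; a short reduction in the multidegree‑$(1,1,2)$ component, which is $3$‑dimensional and is already spanned inside $[F_1,R_3]$ by $[c,u_3],[c,u_5],[b,u_6]$, gives $[a,u_8]\in[F_1,R_3]$; whereas $[c,u_8]=-[[[b,c],c],c]$ survives, since the $a$‑degree‑$0$ part of $[F_1,R_3]$ is spanned only by $[b,u_7],[c,u_7]$. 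Hence $\dim\operatorname{im}\lambda_3=1$ and $s(L)=8$ for this algebra.

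The main obstacle is to make this uniform over all admissible $L$, i.e. over all hyperplanes $R_3\subset F_3$. Setting $V=\langle a,b,c\rangle$, one identifies $F_3=\mathrm{Lie}_3(V)\cong S^{(2,1)}(V)$ and $F_4=\mathrm{Lie}_4(V)\cong S^{(3,1)}(V)\oplus S^{(2,1,1)}(V)$, so the bracket $V\otimes F_3\twoheadrightarrow F_4$ is $GL(V)$‑equivariant with kernel $K\cong S^{(2,2)}(V)$ of dimension $6$. Writing the hyperplane as $R_3=\ker p$ for a nonzero $p\in F_3^{\,*}$, a dimension count gives $\dim\operatorname{im}\lambda_3=3-\operatorname{rank}\pi_p$, where $\pi_p=(1_V\otimes p)|_K\colon K\to V$; so the lemma is equivalent to $\operatorname{rank}\pi_p\ge 2$ for every nonzero $p$. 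Since $p\mapsto\pi_p$ is linear and $GL(V)$‑equivariant, the bad locus $\{\,p:\operatorname{rank}\pi_p\le 1\,\}$ is a closed $GL(V)$‑stable cone, and I expect to finish by showing this cone is $\{0\}$, checking $\operatorname{rank}\pi_p=2$ on orbit representatives of the $8$‑dimensional irreducible $S^{(2,1)}(V^{*})$, with $p=u_8^{*}$ above as the model case. Equivalently — and this is probably closer to the paper's style — one lists the finitely many $7$‑dimensional nilpotent Lie algebras with $\dim L^2=4$ and $L/\gamma_3(L)\cong L_{6,26}$ and reads off $s(L)\in\{8,9\}$ directly; in either route the real difficulty is exactly this control of $\operatorname{im}\lambda_3$ simultaneously for every isomorphism type.
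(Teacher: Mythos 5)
Your reduction to the class-$3$ case is correct and in fact cleaner than the paper's: stemness gives $I\subseteq Z(L)\subseteq L^{2}$, hence $\dim L^{2}=4$, which is incompatible with class $2$ on $3$ generators, so $\gamma_{3}(L)=I$ and $L/\gamma_{3}(L)\cong L_{6,26}$ is free nilpotent of class $2$ on $3$ generators. (The paper instead handles class $2$ by an explicit change of variables showing $L\cong L_{6,26}\oplus A(1)$, contradicting stemness.) Your graded Hopf-formula identity $\dim\mathcal{M}(L)=7+\dim\bigl(F_{4}/[F_{1},R_{3}]\bigr)$ is valid here because $R$ is automatically graded ($R=R_3\oplus F_4\oplus F_5\oplus\cdots$ with $R_3$ a hyperplane of $F_3$), the translation into $\operatorname{rank}\pi_{p}\geq 2$ is correct, and the model computation for $p=u_{8}^{*}$ checks out; your resulting values $s(L)\in\{8,9\}$ are the correct ones (the paper's figures $s=14,15$ at this point are an arithmetic slip, harmless for its conclusion).

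The genuine gap is exactly where you locate it: the key claim, $\operatorname{rank}\pi_{p}\geq 2$ for \emph{every} nonzero $p\in F_{3}^{*}$, is only verified on the single $GL(V)$-orbit of $u_{8}^{*}$, and neither of your two proposed finishing moves is carried out or even correctly finite as stated. The orbits of $GL(V)$ on $\mathbb{P}(S^{(2,1)}(V^{*}))$ form positive-dimensional families, so ``checking orbit representatives'' is not a finite enumeration; correspondingly, the admissible $L$ are \emph{not} finitely many up to isomorphism (the paper's list for this case is $357A$, $247N$, $147D$, $147F$ and the one-parameter family $147E(\lambda)$ from Ming's classification, each of whose Schur multipliers it computes by the Hardy--Stitzinger method). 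What would close your argument cheaply is an observation you do not make: the bad locus $B=\{p:\operatorname{rank}\pi_{p}\leq 1\}$ is a closed $GL(V)$-stable cone, so if $B\neq\{0\}$ then $\mathbb{P}(B)$ must contain the unique closed orbit of $\mathbb{P}(W)$ for the irreducible module $W=S^{(2,1)}(V^{*})$, namely the orbit of the extreme weight line; since $u_{8}^{*}$ (multidegree $(0,1,2)$ dual vector) is precisely such an extreme weight vector, your one computation would then suffice, at least over an algebraically closed field of characteristic $0$. Without that step, or without running the multiplier computation over the full classified list as the paper does, the lemma is not proved.
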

\begin{proof}
Let $ L $ be a nilpoten Lie algebra of class two. By contrary, let there is a stem $ 7 $-dimensional Lie algebra $ L $ of nilpotency class two such that $ L/I\cong L_{6,26} $ and $ I\subseteq Z(L). $ 
Let $ I=\langle x_7 \rangle. $ Then
\begin{align*}
&[x_1, x_2]=x_4+\alpha_1 x_7, &&  [x_1, x_3]=x_5+\alpha_2 x_7, && [x_1, x_4]=\alpha_3 x_7, \cr
&[x_1, x_5]=\alpha_4 x_7,  &&[x_1, x_6]=\alpha_5 x_7, && [x_2, x_3]=x_6+\alpha_6 x_7, \cr
&[x_2, x_4]=\alpha_7 x_7,  &&[x_2, x_5]=\alpha_8 x_7, && [x_2, x_6]=\alpha_9 x_7, \cr
&[x_3, x_4]=\alpha_{10} x_7,  &&[x_3, x_5]=\alpha_{11} x_7, && [x_3, x_6]=\alpha_{12} x_7, \cr
&[x_4, x_5]=\alpha_{13} x_7,  &&[x_4, x_6]=\alpha_{14} x_7, &&  [x_5, x_6]=\alpha_{15} x_7. 
\end{align*}
Since $ L $ is of nilpotency class two, a   change of variables $ x'_4=x_4+\alpha_1 x_7, $ $x'_5= x_5+\alpha_2 x_7 $ and $ x'_6=x_6+\alpha_6 x_7 $ and relabeling, we have 
\begin{equation*}
L=\langle x_1, \dots, x_7\mid [x_1, x_2]=x_4,  [x_1, x_3]=x_5, [x_2, x_3]=x_6 \rangle \cong L_{6,26}\oplus A(1).
\end{equation*}
On the other hand, $ L $ is stem, thus it is a contradiction.\\
Let $ L $ be of nilotency class $ 3. $ Since  $ L_{6,26}  $ is of nilpotency class two and $ L/I\cong L_{6,26},  $ we have $ \dim L^{3}=1. $ Now, by looking at the classification all $ 7 $-dimensional nilpotent Lie algebras in \cite{Mi},  $ L $ should be isomorphic to one of the Lie algebras 
\[357A=\langle x_1, \dots, x_7\mid  [x_{1},x_{2}] =x_{3}, [x_{1},x_{3}] =x_{5}, [x_{1},x_{4}] = x_{7}, [x_{2},x_{4}] =x_{6} \rangle, \]
\[247N=\langle    x_1, \dots, x_7\mid [x_{1},x_{i}] =x_{i+2},~~i=2,3, [x_{1},x_{5}]=x_{6},   [x_{2},x_{3}]=x_{7},  [x_{2},x_{4}] =x_{6}\rangle, \]
\[ 147F=\langle x_1, \dots, x_7\mid  [x_{1},x_{2}] =x_{4}, [x_{1},x_{3}] =-x_{6}, [x_{1},x_{5}] =x_{7}, [x_{1},x_{6}] =x_{7}\]
\[ [x_{2},x_{3}] =x_{5}, [x_{2},x_{4}] =x_{7}, [x_{2},x_{6}] =x_{7}, [x_{3},x_{4}] =x_{7} \rangle. \]
\[147D=\langle  x_1, \dots, x_7\mid [x_{1},x_{2}] =x_{4}, [x_{1},x_{3}] =-x_{6}, [x_{1},x_{5}] = x_{7}, [x_{1},x_{6}] =x_{7}\]
\[ [x_{2},x_{3}] =x_{5}, [x_{2},x_{6}] =x_{7}, [x_{3},x_{4}] = -2x_{7} \rangle,\]
\[\text{or}~~147E=\langle x_1, \dots, x_7\mid [x_{1},x_{2}] =x_{4}, [x_{1},x_{3}] =-x_{6}, [x_{1},x_{5}] =-x_{7}, [x_{2},x_{3}] =x_{5}\]
\[ [x_{2},x_{6}] =\lambda x_{7}, [x_{3},x_{4}] =(1-\lambda)x_{7}\rangle\]
One parameter family, with an invariant  $I(\lambda)=\frac{(1-\lambda+\lambda^{2})^{3}}{\lambda^{2}(\lambda-1)^{2}}, \lambda \neq 0, 1.$ When $ \lambda=0 $ or $ 1, $ it is isomorphic to $ 247P.$\\
By using the method of Hardy and Stitzinger in \cite{B1}, we can see that $ \dim \mathcal{M}(357A)=8 ,$ $ \dim \mathcal{M}(247N)=7 ,$ and $ \dim \mathcal{M}(147D)=\dim \mathcal{M}(147E) =\dim \mathcal{M}(147F)=7,$ so $ s(357A)=14 $ and $ s(247N)=s(147D)=s(147E)=s(147F)=15. $   Therefore there is no  $ 7 $-dimensional Lie algebra  such that $ L/I\cong L_{6,26}, $ $ \dim I=1 $  and $ I\subseteq Z(L). $
\end{proof}
\begin{thm}\label{th2.6}
There is no  $n$-dimensional nilpotent Lie algebra with $ s(L)=6 $ and $ \dim L^{2}=4. $ 
\end{thm}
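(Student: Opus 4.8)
The plan is to show that the assumption $s(L)=6$ and $\dim L^{2}=4$ leads to a contradiction by reducing to the already-classified cases of lower-dimensional quotients, exactly as in the proof of Theorem~\ref{th2.5}. First I would split according to whether $Z(L)\subseteq L^{2}$ or not. In the case $Z(L)\not\subseteq L^{2}$, there is a one-dimensional central ideal $I$ with $I\cap L^{2}=0$, so $\dim(L/I)^{2}=4$ and $L/I$ is again a nilpotent Lie algebra with derived subalgebra of dimension $4$. Applying Theorem~\ref{th1.1} to $L/I$ gives an upper bound on $\dim\mathcal{M}(L/I)$; comparing this with the relation $\dim\mathcal{M}(L)=\tfrac12(n-1)(n-2)-5$ (which encodes $s(L)=6$) and using the fact that $L\cong (L/I)\oplus A(1)$ together with Hardy--Stitzinger (\cite[Theorem~1]{B1}) and Table~\ref{ta81} should rule out each admissible value of $s(L/I)$, since the known six-dimensional algebras with $\dim L^{2}=4$ all have $s$-values $7,8,9$, none of which can produce $s(L)=6$ after adding an abelian direct summand.

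In the case $Z(L)\subseteq L^{2}$ ($L$ is stem), I would pick a one-dimensional ideal $I\subseteq Z(L)\cap L^{2}$ and invoke Theorem~\ref{th1.2} with $K=I$. This yields
\begin{equation*}
\tfrac12(n-1)(n-2)-4\leq \tfrac12(n-2)(n-3)+1-s(L/I)+(n-3),
\end{equation*}
which forces $s(L/I)\leq 4$. Since $\dim(L/I)^{2}=3$, Theorem~\ref{s} and the tables (Table~\ref{ta6}, Table~\ref{ta7}) then give a short finite list of candidates for $L/I$, each of small dimension, so $n$ is bounded above. The main work is then to examine each candidate quotient and each nilpotency class of $L$ individually.

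The hardest part will be the stem case with $L/I$ of dimension close to the bound, where the nilpotency-class-$3$ extensions must be handled by hand. For each candidate $L/I$ I expect to argue as in Theorem~\ref{th2.5}: if $L$ has nilpotency class $2$, a change of variables absorbing the coefficients of $x_{n}$ shows $L$ decomposes as (quotient)$\,\oplus A(1)$, contradicting the stem hypothesis; if $L$ has nilpotency class $3$, then $\dim\gamma_{3}(L)=1$ and I would apply the exact sequence of \cite[proof of Theorem~1.1]{R}, giving
\begin{equation*}
\dim\mathcal{M}(L)=\dim\mathcal{M}(L/\gamma_{3}(L))-\dim\gamma_{3}(L)+\dim\bigl(L/\gamma_{2}(L)\otimes\gamma_{3}(L)\bigr)-\dim\ker\lambda_{3},
\end{equation*}
together with the lower bound $n-m-c\leq\dim\ker\lambda_{3}$, to show the resulting value of $\dim\mathcal{M}(L)$ cannot equal $\tfrac12(n-1)(n-2)-5$. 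Where a candidate forces an explicit seven-dimensional stem algebra of class $3$ over $L_{6,26}$, Lemma~\ref{l2.8} already excludes it outright. Assembling these contradictions across all cases establishes that no such $L$ exists.
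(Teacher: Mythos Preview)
Your overall structure matches the paper's, but there is a concrete computational slip that makes the stem case much harder than it needs to be. In the inequality coming from Theorem~\ref{th1.2} with $K=I\subseteq Z(L)\cap L^{2}$, the term $\dim\bigl((L/K)^{ab}\otimes K\bigr)$ equals $\dim(L/I)^{ab}=(n-1)-\dim(L/I)^{2}=(n-1)-3=n-4$, not $n-3$; you have carried over the constant from the $\dim L^{2}=3$ situation of Theorem~\ref{th2.5}, where $\dim(L/I)^{2}=2$. With the correct $n-4$ the inequality reads
\[
\tfrac12(n-1)(n-2)-4\;\le\;\tfrac12(n-2)(n-3)+1-s(L/I)+(n-4),
\]
which gives $s(L/I)\le 3$, not $\le 4$. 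Since $\dim(L/I)^{2}=3$, Theorem~\ref{s} then forces $L/I\cong L_{6,26}$ (so $n=7$), and Lemma~\ref{l2.8} finishes at once. All of your anticipated ``hardest part'' case analysis---the class-$2$ change-of-variable argument, the Rai exact sequence for class $3$, the $37A$ and $L_{5,6},L_{5,7},L_{5,9}$ candidates---evaporates.

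Two further remarks. First, your treatment of the case $Z(L)\not\subseteq L^{2}$ is more complicated than necessary and, as written, does not close: Table~\ref{ta81} only lists algebras of dimension at most $6$, so appealing to it does not bound $n$. The paper simply applies Theorem~\ref{th1.2} again (now with $L^{2}\cap I=0$ and $\dim(L/I)^{ab}=n-5$) to obtain $s(L/I)\le 3$ with $\dim(L/I)^{2}=4$, which Theorem~\ref{s} rules out directly. Second, your assertion that in nilpotency class $2$ ``a change of variables \dots shows $L$ decomposes as (quotient)$\,\oplus A(1)$'' is not true in general: compare Proposition~\ref{pr3.7}, where genuine stem class-$2$ extensions of $37A$ exist (they have $s(L)=8$). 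Fortunately, once the inequality is corrected none of these extensions arise.
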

\begin{proof}
Let $ L $ be a Lie algebra such that $ Z(L)\not\subset L^{2}. $ Then there exist a central one dimension ideal $ I $ of $ L $ such that $ I\cap L^{2}=0. $ Since $ \dim \mathcal{M}(L)=\frac{1}{2}(n-1)(n-2)-5, $ we have
\begin{align*}
\frac{1}{2}(n-1)(n-2)-5\leq \frac{1}{2}(n-2)(n-3)+1-s(L/I)+n-5
\end{align*}
by using Theorem \ref{th1.2}. Hence $ s(L/I)\leq 3. $ On the other hand, $ (L/I)^{2}=4. $ Thus by using Theorem \ref{s} for $ L/I, $  there is no  Lie algebra with $ s(L/I)\leq 3 $ and  $ (L/I)^{2}=4. $ \\
Assume that $ Z(L)\subseteq L^{2}. $ Then   there exists a  one dimension ideal $ I $ of $ L $ such that $ I\subseteq Z(L)\cap L^{2}.$ By using Theorem \ref{th1.2}, we have
\begin{align*}
\frac{1}{2}(n-1)(n-2)-4\leq \frac{1}{2}(n-2)(n-3)+1-s(L/I)+n-4
\end{align*}
and so $ s(L/I)\leq 3. $ Since $ s(L/I)\leq 3 $ and $ \dim(L/I)^{2}=3, $ by using Theorem \ref{s}, we have $ L/I\cong L_{6,26} $ and so $ n=7. $ Thus there is no such a Lie algebra by using Lemma \ref{l2.8}, as required.
\end{proof}
\begin{thm}
Let $ L $ be an $ n $-dimensional nilpotent Lie algebra with $ s(L)=6. $  Then $ L $ is isomorphic to one of the Lie algebras $L_{5,8}\oplus A(5), L_{4,3}\oplus A(4)$, $L_{5,5}\oplus A(3)$,  $L_{6,22}(\varepsilon)\oplus A(3)$ for all $\varepsilon \in  \mathbb{F}/(\stackrel{*}{\sim}), $ $ L_{6.10}\oplus A(1), $  $ 27A, $  $ 157, $ 
 $ 37A\oplus A(1), $ $ L_{6,6}, $ $ L_{6,7}, $ $ L_{6,9}, $ $ L_{6,11}, $ $ L_{6,12}, $ $    L_{6,19}(\varepsilon)$ for all $ \varepsilon \in \mathbb{F^{*}}/(\stackrel{*}{\sim}), $  $ L_{6,20} $ or $ L_{6,24}(\varepsilon) $ for all $ \varepsilon \in \mathbb{F}/(\stackrel{*}{\sim}). $
\end{thm}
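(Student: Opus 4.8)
The plan is to reduce the statement to the four case-analyses already carried out in this section, organized according to the value of $\dim L^{2}$. Since $L$ is non-abelian and nilpotent, we have $\dim L^{2}\geq 1$, and the defining relation for $s(L)$ gives $\dim\mathcal{M}(L)=\frac{1}{2}(n-1)(n-2)-5$. The first step is to eliminate the extreme values of $\dim L^{2}$: Proposition \ref{pr2.1} shows that no nilpotent Lie algebra with $s(L)=6$ can have $\dim L^{2}=1$ or $\dim L^{2}\geq 5$. This leaves only the three possibilities $\dim L^{2}\in\{2,3,4\}$, which are treated in turn by the theorems of this section.

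For $\dim L^{2}=2$, Theorem \ref{th2.4} supplies exactly the list $L_{5,8}\oplus A(5)$, $L_{4,3}\oplus A(4)$, $L_{5,5}\oplus A(3)$, $L_{6,22}(\varepsilon)\oplus A(3)$ for all $\varepsilon\in\mathbb{F}/(\stackrel{*}{\sim})$, $L_{6,10}\oplus A(1)$, $27A$ and $157$. For $\dim L^{2}=3$, Theorem \ref{th2.5} supplies $37A\oplus A(1)$, $L_{6,6}$, $L_{6,7}$, $L_{6,9}$, $L_{6,11}$, $L_{6,12}$, $L_{6,19}(\varepsilon)$ for all $\varepsilon\in\mathbb{F^{*}}/(\stackrel{*}{\sim})$, $L_{6,20}$ and $L_{6,24}(\varepsilon)$ for all $\varepsilon\in\mathbb{F}/(\stackrel{*}{\sim})$. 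For $\dim L^{2}=4$, Theorem \ref{th2.6} shows that no such algebra exists. Concatenating the three lists yields precisely the family in the statement; since the lists correspond to distinct values of $\dim L^{2}$ there is no overlap, so no further merging is needed and the stated classification is complete.

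Strictly speaking, all the substantive work lives in the component results, so the assembly above is routine; the genuine difficulty is concentrated in two places that the cited theorems must dispatch. The hard part in the $\dim L^{2}=2$ case is ruling out dimension $8$: once Lemma \ref{l1.1} forces $n\leq 8$, one must use the exterior-square identification $L\wedge L\cong (L/Z^{\wedge}(L))\wedge(L/Z^{\wedge}(L))$ to show that the two admissible quotients force $s(L)\in\{7,9\}$, never $6$. The hard part in the $\dim L^{2}=3$ case is the class-$3$ bookkeeping when $Z(L)\subseteq L^{2}$ and the quotient $L/I$ grows large (the $L_{5,8}\oplus A(k)$ and $L_{6,22}(\varepsilon)\oplus A(1)$ branches), where one leans on the formula \eqref{2} for $\dim\mathcal{M}(L)$ in terms of $\gamma_{3}(L)$ together with Lemma \ref{l2.8} to discard the surviving $L_{6,26}$ quotient. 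Once those obstructions are cleared in Theorems \ref{th2.4}--\ref{th2.6}, the present statement follows immediately from the case split on $\dim L^{2}$ described above.
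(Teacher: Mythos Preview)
Your proposal is correct and follows exactly the paper's own approach: the paper's proof simply invokes Proposition~\ref{pr2.1} together with Theorems~\ref{th2.4}, \ref{th2.5} and~\ref{th2.6}, which is precisely the case split on $\dim L^{2}$ that you spell out. Your additional commentary on where the difficulties lie in the component results is accurate but not needed for the assembly step itself.
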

\begin{proof}
The proof is obtained  by using Proposition \ref{pr2.1} Theorems \ref{th2.4}, \ref{th2.5} and \ref{th2.6}. 
\end{proof}
\section{ the structures of a nilpotent Lie algebra $ L $  with $ s(L)=7 $}
In this section, we  classify  all  $ n $-dimensional nilpotent Lie algebras $ L $ with $ s(L)=7. $ We begin by the following proposition. 
\begin{prop}\label{pr3.1}
There is no  $ n $-dimensional nilpotent Lie algebra with $ s(L)=7, $ when 
\begin{itemize}
\item[(i).] $ \dim L^{2}\geq 5, $
\item[(ii).] $  \dim L^{2}=1. $
\end{itemize}
\end{prop}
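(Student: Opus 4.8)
The plan is to treat the two cases by exactly the mechanism behind the analogous statement for $s(L)=6$ in Proposition \ref{pr2.1}: in case (i) play the required value of $\dim\mathcal{M}(L)$ against the dimension bound of Theorem \ref{th1.1}, and in case (ii) appeal to the complete structural description of algebras with one-dimensional derived subalgebra and simply compute $s$. Throughout I write $m=\dim L^{2}$ and use that $s(L)=7$ forces $\dim\mathcal{M}(L)=\tfrac12(n-1)(n-2)-6$.

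For (i), the key step I would carry out first is to subtract the bound of Theorem \ref{th1.1} from $\tfrac12(n-1)(n-2)+1$ and observe that all dependence on $n$ cancels, leaving
\[
\tfrac12(n-1)(n-2)-\tfrac12(n+m-2)(n-m-1)=\tfrac12 m(m-1).
\]
Hence Theorem \ref{th1.1} gives the clean, $n$-independent lower bound $s(L)\geq\tfrac12 m(m-1)$. Substituting $s(L)=7$ yields $m(m-1)\leq 14$, i.e.\ $m\leq 4$; in particular $m=\dim L^{2}\geq 5$ is impossible, since already $m\geq 5$ forces $s(L)\geq 10$. This disposes of (i) uniformly in $n$.

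For (ii), I would invoke Lemma \ref{l1.2} to write $L\cong H(m)\oplus A(n-2m-1)$ for some $m\geq 1$, and then evaluate $s$ on this family. Using the direct-sum formula $\mathcal{M}(A\oplus B)\cong\mathcal{M}(A)\oplus\mathcal{M}(B)\oplus(A^{ab}\otimes B^{ab})$ together with $\dim\mathcal{M}(H(1))=2$, $\dim\mathcal{M}(H(m))=2m^{2}-m-1$ for $m\geq 2$, $\dim\mathcal{M}(A(k))=\binom{k}{2}$ and $\dim(H(m)^{ab}\otimes A(k))=2mk$ with $k=n-2m-1$, a short calculation shows the $n$-dependence again cancels and gives $s(L)=0$ when $m=1$ and $s(L)=2$ when $m\geq 2$. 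These values can equally be read off from Theorem \ref{s}(i),(iii). In either case $s(L)\in\{0,2\}$, so $s(L)=7$ never occurs when $\dim L^{2}=1$.

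I do not expect a genuine obstacle: both parts collapse to an $n$-independent identity once the correct quantity is formed. The only points requiring care are verifying the cancellation of $n$ in case (i), which is precisely what makes one inequality serve all dimensions at once, and remembering that $H(1)$ is the exceptional case in the Schur-multiplier formula used in case (ii).
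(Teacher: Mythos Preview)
Your proof is correct and follows essentially the same approach that the paper invokes by reference: the paper's own proof simply cites the argument of \cite[Proposition 2.4]{s4}, which is precisely the combination of the bound $s(L)\geq\tfrac12 m(m-1)$ from Theorem~\ref{th1.1} for case~(i) and the structural description of Lemma~\ref{l1.2} together with the known values $s(L)\in\{0,2\}$ for case~(ii). You have faithfully reconstructed what the paper defers to.
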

\begin{proof}
The proof is obtained by the same argument as in the proof of  \cite[Proposition 2.4]{s4}.
\end{proof}
\begin{prop}\label{pr3.2}
Let $ L $ be a non-capable  $ 8 $-dimensional Lie algebra with derived subalgebra of dimension $ 2 $ and $ s(L)=7 $ such that $ L/Z^{\wedge}(L)\cong H(1)\oplus A(4). $ Then $ L $ is isomorphic to one of  the nilpotent Lie algebras $ L_{6,10}\oplus A(2), $ $ 27A\oplus A(1), $ 
$ 157\oplus A(1), $  $ L_{6,10}\dotplus H(1),  $ $ H(1)\oplus H(2) $ or
 \[S_1=\langle x_1, \dots, x_8 \mid  [x_1, x_2]=x_6, [x_1, x_4]=x_8, [x_3, x_5]=x_8, [x_{2},x_{7}]= x_8 \rangle.
\]
\end{prop}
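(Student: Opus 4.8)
The plan is to realize $L$ as a one-dimensional central extension of $H(1)\oplus A(4)$ and then reduce the classification to normal forms of an alternating bilinear form. Since $\dim L=8$ and the hypothesis gives the $7$-dimensional quotient $L/Z^{\wedge}(L)\cong H(1)\oplus A(4)$, the exterior centre $Z^{\wedge}(L)=Z^{*}(L)$ is one dimensional, say $Z^{\wedge}(L)=\langle x_{8}\rangle$, and it is central. Moreover $\dim L^{2}=2$ while $\dim(L/Z^{\wedge}(L))^{2}=\dim(H(1)\oplus A(4))^{2}=1$, so $L^{2}$ maps onto a one-dimensional space with kernel $L^{2}\cap Z^{\wedge}(L)$ of dimension $1$; hence $\langle x_{8}\rangle\subseteq L^{2}$ and $L^{2}=\langle x_{6},x_{8}\rangle$ for a suitable $x_{6}$. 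Lifting a basis $x_{1},\dots ,x_{7}$ of the quotient along which $[x_{1},x_{2}]\equiv x_{6}$ is the only nonzero bracket modulo $x_{8}$, and absorbing the $x_{8}$-component of $[x_{1},x_{2}]$ into $x_{6}$, I obtain the presentation
\[ [x_{1},x_{2}]=x_{6},\qquad [x_{i},x_{j}]=\lambda_{ij}\,x_{8}\quad(\{i,j\}\neq\{1,2\}),\qquad [x_{k},x_{8}]=0, \]
in which $x_{1},\dots ,x_{5},x_{7}$ project to a basis of $L/L^{2}$.

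I would then impose the Jacobi identity. On the triples $(x_{1},x_{2},x_{k})$ it forces $[x_{6},x_{k}]=0$ for $k\in\{3,4,5,7\}$, while the constants $[x_{1},x_{6}],[x_{2},x_{6}]\in\langle x_{8}\rangle$ stay free and measure $L^{3}$; the remaining triples only restrict the $\lambda_{ij}$. This separates the problem into the case $L^{3}=0$ (class two) and $L^{3}=\langle x_{8}\rangle$ (class three). In the class-two case $L$ is encoded by the pair of alternating forms on $L/L^{2}$: the rank-two form $\omega_{1}=x_{1}^{*}\wedge x_{2}^{*}$ producing $x_{6}$, and the form $\omega_{2}=\sum\lambda_{ij}\,x_{i}^{*}\wedge x_{j}^{*}$ producing $x_{8}$. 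The group acting is $GL(L/L^{2})$ together with the transformations fixing the distinguished line $\langle x_{8}\rangle$, so the work is to bring $\omega_{2}$ to canonical form relative to the fixed $\omega_{1}$. The invariants are the rank of $\omega_{2}$ and the dimension of the overlap of its radical complement with that of $\omega_{1}$: rank six gives $S_{1}$, rank four meeting $\omega_{1}$ in one variable gives $27A\oplus A(1)$, and rank four disjoint from $\omega_{1}$ gives $H(1)\oplus H(2)$.

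For the class-three case I normalize $[x_{1},x_{6}]=x_{8}$ (after interchanging $x_{1},x_{2}$ or passing to a linear combination when necessary), which produces the chain $[x_{1},x_{2}]=x_{6},\,[x_{1},x_{6}]=x_{8}\cong L_{4,3}$. The outer generators $x_{3},x_{4},x_{5},x_{7}$ carry a residual alternating form with values in $\langle x_{8}\rangle$, and the same rank/overlap analysis yields the three remaining algebras: rank two disjoint from the chain gives $L_{6,10}\oplus A(2)$, rank four disjoint gives $L_{6,10}\dotplus H(1)$, and rank four meeting the chain at $x_{2}$ gives $157\oplus A(1)$. For each surviving normal form I would verify that $\dim Z^{\wedge}(L)=1$ with $Z^{\wedge}(L)=\langle x_{8}\rangle$ and $L/Z^{\wedge}(L)\cong H(1)\oplus A(4)$; granting this, $L\wedge L\cong (H(1)\oplus A(4))\wedge(H(1)\oplus A(4))$ by \cite[Corollary 2.3]{N1}, so $\dim(L\wedge L)=17$, $\dim\mathcal{M}(L)=17-\dim L^{2}=15$ and $s(L)=7$ automatically, with no case-by-case Schur multiplier computation needed.

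The main obstacle is exactly this combination of form-classification and exterior-centre bookkeeping. The admissible change of basis is not the full general linear group, since it must simultaneously preserve the relation $[x_{1},x_{2}]=x_{6}$ and the line $\langle x_{8}\rangle$, so the Kronecker normal forms for a pencil of alternating forms cannot be applied verbatim and completeness of the list has to be checked directly. The other delicate point is discarding the degenerate configurations: forms of too small a rank, or too much overlap with $\omega_{1}$, enlarge the exterior centre or change the quotient (for instance the empty residual form gives $L_{4,3}\oplus A(4)$, whose exterior centre is strictly larger, so that $s=6$ and the quotient is not $H(1)\oplus A(4)$), and these must be excluded to leave precisely the six algebras listed.
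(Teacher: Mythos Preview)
Your approach is valid but genuinely different from the paper's. The paper does \emph{not} work directly with the cocycle $\omega_{2}$; instead it splits first on whether $Z(L)\subseteq L^{2}$ and then on $\dim Z(L)$. When $Z(L)\not\subseteq L^{2}$ it invokes a decomposition lemma (\cite[Proposition~3.1]{J}) to write $L\cong L'\oplus A(1)$ with $L'$ a $7$-dimensional non-capable algebra of derived dimension~$2$, and then reads off $L_{6,10}\oplus A(2)$, $27A\oplus A(1)$, $157\oplus A(1)$ from the existing low-dimensional classifications in \cite{P,Mi}. In the stem case with $\dim Z(L)=1$ it quotes \cite[Theorem~4.6]{J4} to obtain $L_{6,10}\dotplus H(1)$ directly. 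Only in the stem case with $\dim Z(L)=2$ does the paper do hands-on work, and even there it first identifies a $7$-dimensional non-capable ideal $I_{1}\cong 27A$ and then normalises the bracket $[I_{1},A(1)]$, arriving at $S_{1}$ and $H(1)\oplus H(2)$ through a short sequence of variable changes.

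Your route trades these external citations for a uniform bilinear-form reduction; this is more self-contained and explains conceptually why exactly six algebras appear (as normal forms of a pencil), and your observation that $s(L)=7$ follows automatically from $L\wedge L\cong(H(1)\oplus A(4))^{\wedge 2}$ is a genuine economy over the paper's case-by-case multiplier checks. The cost is that the pencil classification has more sub-cases than your summary indicates: in class two, rank-$4$ forms $\omega_{2}$ whose support meets $\{x_{1},x_{2}\}$ in \emph{both} variables (e.g.\ $x_{1}^{*}\wedge x_{3}^{*}+x_{2}^{*}\wedge x_{4}^{*}$, giving $L_{6,22}(0)\oplus A(2)$) and rank-$2$ forms must be enumerated and then discarded via the capability test; in class three the ``residual form'' is really a pair (a skew form $\nu$ on $\langle x_{3},x_{4},x_{5},x_{7}\rangle$ together with the functional $\mu=[x_{2},\,\cdot\,]$ modulo the image of $\nu$), and the rank-$0$ cases produce $L_{4,3}\oplus A(4)$ and $L_{5,5}\oplus A(3)$ which again must be shown capable. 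None of this is hard, but it is where the actual labour of your proof lives, and it must be written out for the argument to be complete.
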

\begin{proof}
Let $ L $ be a Lie algebra and $ Z(L)\not\subseteq L^{2}. $  By using \cite[Theorem 1.1]{P} and looking at the classification of all nilpotent Lie algebras in \cite{Mi}, $ L_{6,10}, $ $ 27A $ and $ 157 $ are non-capable Lie algebras with derived subalgebra of dimension $ 2. $ Since $ Z(L)\not\subseteq L^{2}, $
 $ L $ is isomorphic to one of the nilpotent Lie algebras $ L_{6,10}\oplus A(2), $ $ 27A\oplus A(1) $ or  $ 157\oplus A(1)$ by using \cite[Proposition 3.1]{J}. Now, by using Tables \ref{ta8}, \cite[Lemma 23]{M} and \cite[Theorem 1]{B1} we have $ \dim \mathcal{M}(L_{6,10}\oplus A(2))=\dim \mathcal{M}(27A\oplus A(1))=\dim \mathcal{M}(157\oplus A(1))=15, $ so  $ s(L_{6,10}\oplus A(2))=\dim s(27A\oplus A(1))=s(157\oplus A(1))=7. $
\\
Let $ L $ be a Lie algebra and $ Z(L)\subseteq L^{2}. $ Consider the following cases.\\
Case 1.  If $ \dim Z(L)=1, $ then $ L $ is of nilpotency class $ 3 $ and since $ L $ is non-capable, $ L $ is isomorphic to $ L_{6,10}\dotplus H(1) $ by using \cite[Theorem 4.6]{J4}.\\
Case 2. Let $ \dim Z(L)=2. $  Then $ L^{2}=Z(L) $ and $ L $ is of nilpotency class $ 2. $ There exist ideals $ I_1/Z^{\wedge}(L) $ and $ I_2/Z^{\wedge}(L) $ of $ L/Z^{\wedge}(L) $ such that  $ I_1/Z^{\wedge}(L) \cong H(1)\oplus A(3) $ and $ I_2/I\cong A(1). $  Since  $ L $ is of nilpotency class $ 2, $  $ I_{1} $ is of nilpotency class $ 2. $ Since $ L $ is a  nilpotent Lie algebra, we have $ Z^{\wedge}(L)\subseteq L^{2} $ by using \cite[Corollary 2.3]{J4} On the other hand, $ I_1 \cap I_2=Z^{\wedge}(L) \subseteq I_1.$ Hence $ Z^{\wedge}(L)\subseteq I_1 \cap L^{2}=I_1^{2} $ and so $ Z^{\wedge}(L)\subseteq I_1^{2}. $  Since $ L^{2}/ Z^{\wedge}(L)=I_1^{2}+Z^{\wedge}(L)/Z^{\wedge}(L)$ and $ Z^{\wedge}(L)\subseteq I_1^{2}, $ we have $ I_1^{2}=L^{2} $ and $ Z^{\wedge}(L)\subseteq Z^{\wedge}(I_1). $ Thus $ I_1 $ is non-capable Lie algebra.
By using the classification of all nilpotent Lie algebras in \cite{G, Mi} we have $ I_{1} $ is isomorphic to  $ 27A. $  On the other hand, $ I_{2}=A(1)\oplus Z^{\wedge}(L) $ and $ L=I_1+A(1)\oplus Z^{\wedge}(L)=I_1+A(1). $
Since $ L $ is a stem Lie algebra and $ [I_1, I_2] \subseteq Z^{\wedge}(L),$ we have $ [I_1, A(1)]= Z^{\wedge}(L).$ Otherwise, $ L=I_1\oplus A(1) $ and since $ L $ is stem Lie algebra,  it is a contradiction. 
 Now, we are going to obtain $ [27A, A(1)]. $  Let  $ A(1)\cong \langle x_7 \rangle  $ and 
\[27A\cong \langle x_1, \dots, x_6, x_8 \mid [x_1, x_2]=x_6, [x_1, x_4]=x_8, [x_3, x_5]=x_8    \rangle. 
\] 
 We have
\begin{align*}
 &[x_{1},x_{7}]=\alpha_{1}x_8,& & [x_{2},x_{7}]= \alpha_{2}x_8, &  & [x_{3},x_{7}]=\alpha_{3}x_8,   \cr
 &[x_{4},x_{7}]=\alpha_{4}x_8,& &[x_{5},x_{8}]=\alpha_{5}x_8.
\end{align*}
 Put  $ x'_7=x_7-\alpha_3 x_5.$  By relabeling we have
\begin{align*}
 &[x_{1},x_{7}]=\alpha_{1}x_8,& & [x_{2},x_{7}]= \alpha_{2}x_8, &  & [x_{3},x_{7}]=0,   \cr
 &[x_{4},x_{7}]=\alpha_{4}x_8,& &[x_{5},x_{7}]=\alpha_{5}x_8.
\end{align*}
A change of variable  $ x'_7=x_7-\alpha_5 x_3 $ and relabeling we have
\begin{align*}
 &[x_{1},x_{7}]=\alpha_{1}x_8,& & [x_{2},x_{7}]= \alpha_{2}x_8, &  & [x_{3},x_{7}]=0,   \cr
 &[x_{4},x_{7}]=\alpha_{4}x_8,& &[x_{5},x_{7}]=0.
\end{align*}
Put $ x'_7=x_7-\alpha_1 x_4, $  we have
\begin{align*}
 &[x_{1},x_{7}]=0,& & [x_{2},x_{7}]= \alpha_{2}x_8, &  & [x_{3},x_{7}]=0,   \cr
 &[x_{4},x_{7}]=\alpha_{4}x_8,& &[x_{5},x_{7}]=0.
\end{align*}
Therefore 
\[L=\langle x_1, \dots, x_8 \mid  [x_1, x_2]=x_6, [x_1, x_4]=x_8, [x_3, x_5]=x_8, [x_{2},x_{7}]= \alpha_{2}x_8, [x_{4},x_{7}]=\alpha_{4}x_8 \rangle. 
\]
Consider the  following cases.\\
Case 1. Let $ \alpha_2\neq 0 $ and $ \alpha_{4}=0. $ Then by using a change of variable $ x'_7=\alpha_2 x_7 $ and relabeling, we have
\[S_1=\langle x_1, \dots, x_8 \mid  [x_1, x_2]=x_6, [x_1, x_4]=x_8, [x_3, x_5]=x_8, [x_{2},x_{7}]= x_8 \rangle.
\]
Case 2.  Let $ \alpha_2= 0 $ and $ \alpha_{4}\neq 0. $ A change of variable $ x'_7=\alpha_4 x_7 $ and relabeling we have
\begin{equation*}
  [x_1, x_2]=x_6, [x_1, x_4]=x_8, [x_3, x_5]=x_8, [x_{4},x_{7}]=x_8. 
\end{equation*}
Put $ x'_1=x_1+x_7. $ By relabeling we have
\[\langle x_1, \dots, x_8 \mid  [x_1, x_2]=x_6, [x_3, x_5]=x_8, [x_{4},x_{7}]=x_8 \rangle \cong H(1)\oplus H(2).
\]
  Case 3. Let $ \alpha_2\neq 0 $ and $ \alpha_{4}\neq 0. $ Then by using $ x'_7=\alpha_4^{-1}x_7 $ and relabeling, we have 
 \begin{equation*}
 [x_1, x_2]=x_6, [x_1, x_4]=x_8, [x_3, x_5]=x_8, [x_{2},x_{7}]= \alpha_{2}x_8, [x_{4},x_{7}]=x_8.
 \end{equation*} 
 Therefore by a change of variable $ x'_2=\alpha_2^{-1}x_2, $ $ x'_6= \alpha_2^{-1} x_6$ and relabeling we have
 \begin{equation*}
  [x_1, x_2]=x_6, [x_1, x_4]=x_8, [x_3, x_5]=x_8, [x_{2},x_{7}]=x_8, [x_{4},x_{7}]=x_8.
 \end{equation*}
Finally, appropriate   changes of variables $ x'_1=x_1+x_7, $  $ x'_2=x_2-x_4 $ and $ x'_6=x_6-x_8 $ and relabeling show that 
 \[\langle x_1, \dots, x_8 \mid  [x_1, x_2]=x_6,  [x_3, x_5]=x_8,  [x_{4},x_{7}]=x_8 \rangle \cong H(1)\oplus H(2).
\]
Therefore  $ L $ is isomorphic to $ H(1)\oplus H(2) $ or $ S_1 $ in this case, as required.
\end{proof}
\begin{thm}\label{th3.4}
Let $ L $ be an $ n $-dimensional nilpotent Lie algebra with $ s(L)=7$ and $ \dim L^{2}=2. $ Then $ L $ is isomorphic to  $ L_{5,8}\oplus A(6) $ $ L_{4,3}\oplus A(5), $ $ L_{5,5}\oplus A(4), $ $ L_{6,22}(\varepsilon) \oplus A(4) $ for all $ \varepsilon \in \mathbb{F}/(\stackrel{*}{\sim}), $  $ 27B, $  $ L_{6,10}\oplus A(2),$    $ 27A\oplus A(1), $ $ 157\oplus A(1), $   $ L_{6,10}\dotplus H(1),  $ $ H(1)\oplus H(2) $ or $ S_1. $
\end{thm}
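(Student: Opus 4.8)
The plan is to split the argument according to whether $L$ is capable or non-capable, exactly mirroring the $s(L)=6$ analysis of Theorem \ref{th2.4}.

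If $L$ is capable, I would follow the method of \cite[Theorem 2.6]{s4}. The relevant capable algebras with $\dim L^2=2$ are obtained from the cores $L_{5,8}$, $L_{4,3}$, $L_{5,5}$, $L_{6,22}(\varepsilon)$ by adjoining an abelian direct summand $A(k)$. Since $\dim\mathcal{M}(K\oplus A(k))=\dim\mathcal{M}(K)+\binom{k}{2}+k\dim K^{ab}$ by \cite[Theorem 1]{B1}, a short computation shows that $s$ increases by exactly one whenever $k$ is increased by one; imposing $s(L)=7$ then pins down $L_{5,8}\oplus A(6)$, $L_{4,3}\oplus A(5)$, $L_{5,5}\oplus A(4)$ and $L_{6,22}(\varepsilon)\oplus A(4)$. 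The only further capable algebra at this level is the indecomposable $7$-dimensional algebra $27B$, which is capable with $s(27B)=7$ by the computation recorded in the proof of Proposition \ref{pr2.3}.

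If $L$ is non-capable, then Lemma \ref{l1.1} gives $n-3<s(L)=7$, so $n\leq 9$. For $n\leq 7$ the list in Proposition \ref{pr2.3} exhausts the non-capable algebras with $\dim L^2=2$, and each has $s(L)\leq 6$, so no example occurs. For $n=8$ I would use that $L$ is nilpotent, so $Z^{\wedge}(L)\subseteq L^2$ by \cite[Corollary 2.3]{J4}, forcing $1\leq\dim Z^{\wedge}(L)\leq 2$. Via the isomorphism $L\wedge L\cong (L/Z^{\wedge}(L))\wedge(L/Z^{\wedge}(L))$ of \cite[Corollary 2.3]{N1} and Lemma \ref{l1.2}, the quotient $L/Z^{\wedge}(L)$ is $A(6)$ (if $\dim Z^{\wedge}(L)=2$) or one of $H(1)\oplus A(4)$, $H(2)\oplus A(2)$, $H(3)$ (if $\dim Z^{\wedge}(L)=1$). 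Computing $\dim(L\wedge L)$ from \cite[Theorem 1]{B1} and using $\dim\mathcal{M}(L)=\dim(L\wedge L)-\dim L^2$ shows that $A(6)$, $H(2)\oplus A(2)$ and $H(3)$ all yield $s(L)=9$, whereas only $H(1)\oplus A(4)$ yields $s(L)=7$. In that surviving case I invoke Proposition \ref{pr3.2}, which produces $L_{6,10}\oplus A(2)$, $27A\oplus A(1)$, $157\oplus A(1)$, $L_{6,10}\dotplus H(1)$, $H(1)\oplus H(2)$ or $S_1$.

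Finally, for $n=9$ the same exterior-square bookkeeping applied to $L/Z^{\wedge}(L)$ — now one of $A(7)$, $H(1)\oplus A(5)$, $H(2)\oplus A(3)$, $H(3)\oplus A(1)$ — gives $s(L)\in\{8,10\}$ in every case, ruling out all $9$-dimensional examples. Assembling the capable and non-capable lists then completes the classification. I expect the main obstacle to lie in the case $n=8$ with $L/Z^{\wedge}(L)\cong H(1)\oplus A(4)$: the multiplier computation only isolates the quotient, so the actual isomorphism types — in particular the new algebra $S_1$ and the central product $L_{6,10}\dotplus H(1)$ — must be extracted from the explicit central-extension and change-of-variable analysis carried out in Proposition \ref{pr3.2}, rather than from dimension counts alone.
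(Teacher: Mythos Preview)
Your proposal is correct and follows essentially the same route as the paper: split into capable versus non-capable, handle the capable case via \cite[Theorem 2.6]{s4} together with the computation $s(27B)=7$, bound $n\leq 9$ in the non-capable case by Lemma \ref{l1.1}, eliminate $n\leq 7$ via Proposition \ref{pr2.3}, reduce $n=8$ to $L/Z^{\wedge}(L)\cong H(1)\oplus A(4)$ and invoke Proposition \ref{pr3.2}, and rule out $n=9$ by the same exterior-square calculation. The one cosmetic difference is that you enumerate all Heisenberg quotients $H(m)\oplus A(k)$ for $L/Z^{\wedge}(L)$ and eliminate them by direct computation of $s(L)$, whereas the paper lists only $A(6)$ and $H(1)\oplus A(n-4)$ --- implicitly using that $L/Z^{\wedge}(L)$ is always capable while $H(m)\oplus A(k)$ is non-capable for $m\geq 2$; your longer enumeration is harmless and reaches the same conclusion.
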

\begin{proof}
If $ L $  is a capable Lie algebra, by using a proof  similar that be used in \cite[Theorem 2.6]{s4},  $ L $ is isomorphic to one of the Lie algebras  $ L_{5,8}\oplus A(6) $ $ L_{4,3}\oplus A(5), $ $ L_{5,5}\oplus A(4), $  $ L_{6,22}(\varepsilon) \oplus A(4) $ for all $ \varepsilon \in \mathbb{F}/(\stackrel{*}{\sim})$ or $ 27B. $ 
\\
If $ L $  is a non-capable Lie algebra, then $ n\leq 9 $ by using Lemma \ref{l1.1}. Let $ n\leq 7, $ then there is no such a Lie algebra by using  Proposition \ref{pr2.3}. Let $ n=8. $ Similar to the  proof of Theorem \ref{th2.4}, we have  $ L/Z^{\wedge}(L)\cong H(1)\oplus A(4) $ and  $ s(L)=7. $ Now, by using  Proposition \ref{pr3.2}, $ L $ is isomorphic to one of the Lie algebras $ L_{6,10}\oplus A(2), $ $ 27A\oplus A(1), $ $ 157\oplus A(1), $   $ L_{6,10}\dotplus H(1),  $ $ H(1)\oplus H(2) $ or $ S_1. $ \\
If  $ n=9, $ then $ L/Z^{\wedge}(L) $ is isomorphic to $ A(6) $ or $ H(1)\oplus A(5). $ Since by using \cite[Corollary 2.2]{N1} $L\wedge L \cong  L/Z^{\wedge}(L) \wedge  L/Z^{\wedge}(L), $ we have $ \dim L\wedge L $ is equal to $ 15 $ or $ 23, $  so $ \mathcal{M}(L) $ is isomorphic to  $ A(13) $ or $ A(21) $ by using \cite[Theorem 35 (iii)]{E}. Hence $ s(L)=16 $ or $ s(L)=8, $ respectively.   Therefore 
by our assumption there is no such a Lie algebra.  
\end{proof}
\begin{lem}\label{l3.3}
Let $ L $ be an $ n $-dimensional generalized Heisenberg of rank $ 3 $ with $ s(L)=7. $ Then $ n\leq 8. $
\end{lem}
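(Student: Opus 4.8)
The plan is to follow the template of \cite[Lemma 2.8]{s5} used for Lemma \ref{l2.3}: reduce to the already classified algebras of derived dimension $2$ via Theorem \ref{th1.2}, and then eliminate the few high-dimensional survivors by a direct Schur-multiplier computation.

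Since $L$ is generalized Heisenberg of rank $3$, we have $L^{2}=Z(L)$ and $\dim L^{2}=3$, so $L$ is a stem Lie algebra of nilpotency class $2$. Choose a one-dimensional ideal $K\subseteq L^{2}=Z(L)$. Because $K\subseteq L^{2}$ we get $\dim(L^{2}\cap K)=1$, $\mathcal{M}(K)=0$, $(L/K)^{ab}\cong L/L^{2}$, and $\dim(L/K)^{2}=2$. Applying Theorem \ref{th1.2} to $K$ yields $\dim\mathcal{M}(L)\le \dim\mathcal{M}(L/K)+n-4$. Substituting $\dim\mathcal{M}(L)=\tfrac12(n-1)(n-2)-6$ (from $s(L)=7$) and $\dim\mathcal{M}(L/K)=\tfrac12(n-2)(n-3)+1-s(L/K)$ and simplifying, the quadratic terms cancel and one is left with $s(L/K)\le 5$.

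Now $L/K$ is an $(n-1)$-dimensional nilpotent Lie algebra with $\dim(L/K)^{2}=2$ and $s(L/K)\le 5$, so by Theorem \ref{s} it lies in the explicit list $L_{5,8}\oplus A(k)$, $L_{4,3}\oplus A(k)$, $L_{5,5}\oplus A(k)$, $L_{6,22}(\varepsilon)\oplus A(k)$ (for the admissible ranges of $k$) together with $L_{6,10}$. Reading off the dimensions, each such algebra has dimension at most $9$, whence $n\le 10$; moreover the only quotients forcing $n\in\{9,10\}$ are $L/K\cong L_{5,8}\oplus A(3)$ or $L_{6,22}(\varepsilon)\oplus A(2)$ (giving $n=9$) and $L/K\cong L_{5,8}\oplus A(4)$ (giving $n=10$). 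It therefore remains to rule out a rank-$3$ generalized Heisenberg algebra $L$ of dimension $9$ or $10$ with $s(L)=7$.

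This last step is the crux. Writing $V=L/L^{2}$ (of dimension $d=n-3$) and $\beta\colon \wedge^{2}V\twoheadrightarrow L^{2}$ for the bracket form, the Hopf-formula computation for the class-two algebra $L$ gives $\dim\mathcal{M}(L)=\big(\binom{d}{2}-3\big)+\dim\operatorname{coker}\bar{J}$, where $\bar{J}\colon \wedge^{3}V\to V\otimes L^{2}$ is the Jacobi map $x\wedge y\wedge z\mapsto x\otimes\beta(y,z)+y\otimes\beta(z,x)+z\otimes\beta(x,y)$. The requirement $s(L)=7$ translates into $\dim\operatorname{coker}\bar{J}=2n-8$, equivalently $\dim\operatorname{im}\bar{J}=n-1$. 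The hard part is to show this is impossible once $d\ge 6$: because $L$ is generalized Heisenberg we have $\operatorname{rad}\beta=0$ and $\beta$ is onto a $3$-dimensional space, and these non-degeneracy constraints force $\operatorname{im}\bar{J}$ to have dimension at least $n$ when $n\ge 9$, contradicting $\dim\operatorname{im}\bar{J}=n-1$. Carrying out this image estimate (equivalently, bounding the degree-three part of $\mathcal{M}(L)$) exactly as in the argument of \cite[Lemma 2.8]{s5} eliminates the dimensions $9$ and $10$ and leaves $n\le 8$, as required.
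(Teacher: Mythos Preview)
Your proposal is correct and follows exactly the template the paper invokes by citing \cite[Lemma 2.8]{s5}; indeed the paper's own proof is a one-line deferral to that source, so your explicit reduction to $n\le 10$ via Theorem~\ref{th1.2} and Theorem~\ref{s}, together with the Jacobi-map reformulation for the residual cases $n=9,10$, is already more detailed than what appears in the paper. The only caveat is that the decisive rank estimate $\dim\operatorname{im}\bar J\ge n$ for $d\ge 6$ is asserted rather than carried out in your write-up, and this is precisely where the content of \cite[Lemma 2.8]{s5} lies; since the paper supplies no argument here either, your sketch is at least as complete, but be aware that this inequality is the non-trivial step and does not follow formally from $\operatorname{rad}\beta=0$ alone.
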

\begin{proof}
The proof is obtained by the similar procedure as in  \cite[Lemma 2.8]{s5}
\end{proof}
\begin{prop}\label{pr3.5}
There is no  stem $ 8 $-dimensional  Lie algebra $ L $ of nilpotency class two with derived subalgebra of dimension $ 3 $ and  $ s(L)=7 $ such that $ L/I\cong L_{6,22}(\varepsilon)\oplus A(1) $ for all $ \varepsilon \in \mathbb{F}/(\stackrel{*}{\sim})$ and $ I\subseteq Z(L). $
\end{prop}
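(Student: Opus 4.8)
The plan is to argue by contradiction following the class-two part of Lemma~\ref{l2.8}: assume such an $L$ exists, record its most general multiplication table, trim the free parameters by changes of variables subject to the stem hypothesis, and compute $s(L)$ for each surviving type to contradict $s(L)=7$. Note first that for $n=8$ one has $s(L)=\tfrac12\cdot 7\cdot 6+1-\dim\mathcal M(L)=22-\dim\mathcal M(L)$, so the goal is exactly to show $\dim\mathcal M(L)\neq 15$.

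Set $I=\langle x_8\rangle$ and lift a basis of $L_{6,22}(\varepsilon)\oplus A(1)$ to $x_1,\dots,x_7\in L$. Because $L$ is stem and of class two, $L^2\subseteq Z(L)$ while $I\subseteq Z(L)\subseteq L^2$; hence $Z(L)=L^2=\langle x_5,x_6,x_8\rangle$, these three generators are central, and every bracket meeting one of them vanishes. Thus the only possibly nonzero products are the ten brackets among $x_1,x_2,x_3,x_4,x_7$, each equal to its value in $L_{6,22}(\varepsilon)\oplus A(1)$ plus a correction $a_ix_8$:
\begin{align*}
&[x_1,x_2]=x_5+a_1x_8, && [x_1,x_3]=x_6+a_2x_8, && [x_1,x_4]=a_3x_8,\\
&[x_1,x_7]=a_4x_8, && [x_2,x_3]=a_5x_8, && [x_2,x_4]=\varepsilon x_6+a_6x_8,\\
&[x_2,x_7]=a_7x_8, && [x_3,x_4]=x_5+a_8x_8, && [x_3,x_7]=a_9x_8,\\
&[x_4,x_7]=a_{10}x_8.
\end{align*}
Replacing $x_5$ by $x_5+a_1x_8$ and $x_6$ by $x_6+a_2x_8$ and relabelling lets me assume $a_1=a_2=0$.

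I would then exploit the stem condition. Writing $B_5,B_6,B_8$ for the alternating forms on $L/L^2=\langle\bar x_1,\dots,\bar x_4,\bar x_7\rangle$ that record the $x_5$-, $x_6$- and $x_8$-coefficients of brackets, the equality $Z(L)=L^2$ says precisely that $B_5,B_6,B_8$ have trivial common radical. Since $B_5$ is nondegenerate on $\langle\bar x_1,\dots,\bar x_4\rangle$ and kills $\bar x_7$, one gets $\mathrm{rad}\,B_5\cap\mathrm{rad}\,B_6=\langle\bar x_7\rangle$, so the stem condition is equivalent to $(a_4,a_7,a_9,a_{10})\neq 0$. Using the lifts to $L$ of the automorphisms of $L_{6,22}(\varepsilon)\oplus A(1)$ together with the scaling of $x_8$, I would normalize the class of $B_8$ modulo $\langle B_5,B_6\rangle$, first simplifying $(a_4,a_7,a_9,a_{10})$ and then the entries $a_3,a_5,a_6,a_8$, reducing the eight parameters to a short list of isomorphism types (at worst a one-parameter $\varepsilon$-family). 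Carrying out this reduction so as to respect the fixed forms $B_5,B_6$ and the equivalence $\stackrel{*}{\sim}$ is the main obstacle.

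Finally, for each surviving algebra I would compute $\dim\mathcal M(L)$ by the Hardy--Stitzinger method of \cite{B1}, exactly as in Lemma~\ref{l2.8}, and check that $\dim\mathcal M(L)\neq 15$, i.e. $s(L)\neq 7$. As a sanity check, Theorem~\ref{th1.2} with $K=I$ gives $\dim\mathcal M(L)+1\le \dim\mathcal M(L_{6,22}(\varepsilon)\oplus A(1))+\dim((L/I)^{ab}\otimes I)=12+5=17$, so $\dim\mathcal M(L)\le 16$ and $s(L)\ge 6$ automatically; I expect the stem condition to pin every such algebra at $\dim\mathcal M(L)=16$, equivalently $s(L)=6$, so that $s(L)=7$ never occurs. (Lemma~\ref{l3.3} shows $n\le 8$ for a rank-three generalized Heisenberg algebra with $s=7$, so dimension alone cannot exclude these $L$ and the finer parameter analysis is genuinely needed.) This contradiction finishes the proof.
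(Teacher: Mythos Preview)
Your setup and the stem analysis are correct, and the plan would work if carried through, but the paper takes a different and more economical route. Rather than normalize the eight free parameters $a_3,\dots,a_{10}$ via automorphisms of $L_{6,22}(\varepsilon)\oplus A(1)$---what you flag as ``the main obstacle''---the paper pulls back the direct-sum decomposition of $L/I$ to ideals $I_1,I_2\subseteq L$ with $I_1\cap I_2=I$. From $L^2=I_1^2+I$ and class two it identifies $I_1$ as one of the five known $7$-dimensional class-two algebras $37A$, $37B$, $37C$, $37D$ or $L_{6,22}(\varepsilon)\oplus A(1)$, writes $L=I_1\rtimes A(1)$ with $[I_1,A(1)]=I$ forced by the stem hypothesis, and then runs the Hardy--Stitzinger computation of $\mathcal M(L)$ \emph{with the remaining bracket coefficients left as symbolic parameters}. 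For $I_1\cong 37A$ this yields $\dim\mathcal M(L)=14$ independently of the parameters, hence $s(L)=8$; the other four cases are dispatched ``by a similar way''. The key observation is that the multiplier dimension is insensitive to the specific values of the surviving $\alpha_i$, so no case-by-case normalization is needed at all.

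One correction: your expectation that the stem condition pins $\dim\mathcal M(L)$ at $16$ (i.e.\ $s(L)=6$) is wrong; the paper's computation gives $\dim\mathcal M(L)=14$, so $s(L)=8$. This does not affect the conclusion $s(L)\neq 7$, but it shows that your upper bound $\dim\mathcal M(L)\le 16$ from Theorem~\ref{th1.2} is not sharp here, and that the actual computation---which you have only sketched---behaves differently from what you anticipated.
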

\begin{proof}
There exist ideals $ I_1/I $ and $ I_2/I $ of $ L/I $ such that  $ I_1/I\cong L_{6,22}(\varepsilon) $ for all $ \varepsilon \in \mathbb{F}/(\stackrel{*}{\sim})$ and $ I_2/I\cong A(1). $ Since $ L^{2}/I\cong (I_1^{2}+I)/I, $ we have $ L^{2}=I_1^{2}+I. $ Hence $ \dim I_1^{2}=2 $ or $ 3. $ On the other hand, $ L $ is of nilpotecy class two thus $ I_1 $ is of nilpotency class two. Let $ \dim I_1^{2}=3. $
Since $ I_1 $ is of nilpotency class two and  $ \dim I_1^{2}=3, $  $ I_1 $ is isomorphic to one of the Lie algebras $ 37A, $ $ 37B, $ $ 37C $ or $ 37D$ by looking at  Table \ref{ta4}. If $ \dim I_1^{2}=2, $ then $ I_1 $ is isomorphic to $ L_{6,22}(\varepsilon) \oplus A(1) $ for all $ \varepsilon \in \mathbb{F}/(\stackrel{*}{\sim}) $ by looking at Table \ref{ta2}. On the other hand,   $  I_2/I\cong A(1), $ we have $ I_2\cong A(2). $  We claim that and  $ [I_1, I_2]=I. $ Otherwise $ [I_1, I_2]=0 $ and so $ I_{2}\subseteq Z(L). $ Hence $ \dim Z(L)\geq 4. $ It is a contradiction.
Therefore $ L=I_{1} + I_{2} =I_{1} \rtimes A(1) $ such that $ I_1 $ is isomorphic to   one of the Lie algebras $ 37A, $ $ 37B, $ $ 37C, $  $ 37D $ or $L_{6,22}(\varepsilon) \oplus A(1) $ for all $ \varepsilon \in \mathbb{F}/(\stackrel{*}{\sim}) $ and $ [I_1, A(1)]=\langle x_{7} \rangle . $  \\
Let $ L\cong I_{1}\rtimes A(1) $ such that $ I_1\cong 37A=\langle x_1,\dots, x_6, x_8 \mid [x_1, x_2]=x_5, [x_2, x_3]=x_6, [x_2, x_4]=x_8\rangle $ and $ A(1)=\langle x_7 \rangle. $ Then the relations of $ L $ are as follow. 
\begin{align*}
&[x_1, x_2]=x_5, &&[x_2, x_3]=x_6, && [x_2, x_4]=x_8,&& [x_1, x_7]=\alpha_1 x_8,\cr
&[x_2, x_7]=\alpha_2 x_8, &&[x_3, x_7]=\alpha_3 x_8, &&[x_4, x_7]=\alpha_4 x_8.
\end{align*}
By using a change of variable $ x'_7=x_7-\alpha_2x_4 $ and relabiling, we have
\begin{align*}
&[x_1, x_2]=x_5, &&[x_2, x_3]=x_6, && [x_2, x_4]=x_8,&& [x_1, x_7]=\alpha_1 x_8,\cr
&[x_2, x_7]=0, &&[x_3, x_7]=\alpha_3 x_8, &&[x_4, x_7]=\alpha_4 x_8.
\end{align*}
 Since $ [I_1, A(1)]=\langle x_{8} \rangle, $ at least one of the $ \alpha_1, $ $ \alpha_3 $ or $ \alpha_4 $ is non-zero. Let $ \alpha_1 \neq 0. $ Then we are going to obtain  the Schur multiplier of $ L. $ By using the method of Hardy and Stitzinger in \cite{B1}. Start with
 \begin{align*}
  &[x_{1},x_{2}]=x_{5}+s_{1},& & [x_{1},x_{3}]= s_{2}, &  & [x_{1},x_{4}]=s_{3},   \cr
 &[x_{1},x_{5}]=s_{4},& &[x_{1},x_{6}]=s_{5},& &    [x_{1},x_{7}]=\alpha_1 x_8+s_{6},  \cr
  &[x_{1},x_{8}]=s_{7}, &&[x_{2},x_{3}]=x_{6}+s_{8},& &[x_{2},x_{4}]=x_8+s_{9}, \cr
 &[x_{2},x_{5}]=s_{10}, &&[x_{2},x_{6}]=s_{11},&&[x_{2}, x_{7}]=s_{12},\cr
& [x_{2},x_{8}]=s_{13}, &&[x_{3},x_{4}]=s_{14}, &&[x_{3},x_{5}]=s_{15},\cr
&[x_{3}, x_{6}]=s_{16}, && [x_{3},x_{7}]=\alpha_3 x_8+s_{17},&& [x_{3},x_{8}]=s_{18},\cr
 &[x_{4}, x_{5}]=s_{19},&& [x_{4}, x_{6}]=s_{20}, && [x_{4},x_{7}]=\alpha_4 x_8+s_{21}, \cr
  &[x_{4},x_{8}]=s_{22},&&[x_{5},x_{6}]=s_{23},&& [x_{5}, x_{7}]=s_{24},\cr
  &[x_{5},x_{8}]=s_{25}, && [x_{6},x_{7}]=s_{26}, && [x_{6},x_{8}]=s_{27},\cr
  & [x_{7},x_{8}]=s_{28},
\end{align*}
where $  s_{1},..., s_{28} $ generate $ \mathcal{M}(L) $. Put $x^{\prime}_{5}=x_{5}+s_{1},$  $x^{\prime}_{6}=x_{6}+s_{8}$ and  $x^{\prime}_{8}=x_{8}+s_{9}.$   A change of variables allows that $ s_{1}=s_{8}=s_{9}=0. $ Now, by using the Jocobi identity we have
\begin{align*}
&s_{5}=[x_{1}, x_{6}]=[x_{1},[x_{2},x_{3}]]=-([x_{3},[x_{1},x_{2}]]+[x_{2},[x_{3},x_{1}]])=-s_{15},\cr
&s_7=[x_{1}, x_{8}]=[x_{1},[x_{2},x_{4}]]=-([x_{4},[x_{1},x_{2}]]+[x_{2},[x_{4},x_{1}]])=-s_{19},\cr
&s_{18}=[x_{3}, x_{8}]=[x_{3},[x_{2},x_{4}]]=-([x_{4},[x_{3},x_{2}]]+[x_{2},[x_{4},x_{3}]]= s_{20},\cr
&s_{18}=[x_{3}, x_{8}]=[x_{3},\frac{1}{\alpha_1}[x_{1},x_{7}]]=-\frac{1}{\alpha_1}([x_{7},[x_{3},x_{1}]]+[x_{1},[x_{7},x_{3}]])=\frac{\alpha_3}{\alpha_1}s_7,\cr
&s_{22}=[x_{4}, x_{8}]=[x_{4},\frac{1}{\alpha_1}[x_{1},x_{7}]]=-\frac{1}{\alpha_1}([x_{7},[x_{4},x_{1}]]+[x_{1},[x_{7},x_{4}]])=\frac{\alpha_4}{\alpha_1}s_7,\cr
&s_{23}=[x_{5}, x_{6}]=[x_{5},[x_{2},x_{3}]]=-([x_{3},[x_{5},x_{2}]]+[x_{2},[x_{3},x_{5}]]=0,\cr
&s_{24}=[x_{5}, x_{7}]=-[x_{7},[x_{1},x_{2}]]=[x_{2},[x_{7},x_{1}]]+[x_{1},[x_{2},x_{7}]]=-\alpha _1 s_{13},\cr
&s_{25}=[x_{5}, x_{8}]=[x_{5},[x_{2},x_{4}]]=-([x_{4},[x_{5},x_{2}]]+[x_{2},[x_{4},x_{5}]])=0,\cr
&s_{26}=[x_{6}, x_{7}]=-[x_{7},[x_{2},x_{3}]]=[x_{3},[x_{7},x_{2}]]+[x_{2},[x_{3},x_{7}]]=\alpha _3 s_{13},\cr
&s_{27}=[x_{6}, x_{8}]=[x_{6},[x_{2},x_{4}]]=-([x_{4},[x_{6},x_{2}]]+[x_{2},[x_{4},x_{6}]])=0,\cr
&s_{28}=[x_{7}, x_{8}]=[x_{7},[x_{2},x_{4}]]=-([x_{4},[x_{7},x_{2}]]+[x_{2},[x_{4},x_{7}]])=\alpha _{4}s_{13}.\cr
\end{align*}
 Therefore
\[ \mathcal{M}(L)=\langle s_2, s_{3}, s_{4},  s_{5}, s_{6}, s_{7}, s_{10},  s_{11}, s_{12}, s_{13}, s_{14},  s_{16}, s_{17}, s_{21} \rangle
\]
and so $ \dim \mathcal{M}(L)=14. $ Hence $ s(L)\neq 7. $ Also, if  $ \alpha_{3} $ or $ \alpha_{4} $ are non-zero, then $ \dim \mathcal{M}(L)=14$ and so $ s(L)\neq 7. $\\
By using a similar way if   $ L=I_{1} \rtimes A(1) $ such that $ I_1 $ is isomorphic to   one of the Lie algebras  $ 37B, $ $ 37C, $  $ 37D $ or $L_{6,22}(\varepsilon) \oplus A(1) $ and $ [I_1, A(1)]=\langle x_{8} \rangle,  $ then $ s(L)\neq 7. $ Therefore by our assumption there is no  such a Lie algebra. 
\end{proof}
Let $ cl(L) $ be used to show the nilpotency class of $ L. $ Then 
\begin{prop}\label{pr3.6}
There is no stem $ 8 $-dimensional  Lie algebra $ L $  with derived subalgebra of dimension $ 3 $ and  $ s(L)=7 $ such that $ L/I\cong L_{5,8}\oplus A(2) $ and $ I\subseteq Z(L). $
\end{prop}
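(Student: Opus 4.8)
The plan is to proceed by contradiction, assuming such a stem Lie algebra $ L $ exists, and to run the same structural analysis used in Proposition \ref{pr3.5}. Since $ L/I \cong L_{5,8} \oplus A(2) $ with $ \dim I = 1 $ and $ I \subseteq Z(L) \cap L^2 $, we have $ n = 8 $ and $ \dim L^2 = 3 $. First I would split on the nilpotency class of $ L $. Because $ L_{5,8} $ is abelian-by-nothing of class $ 2 $ (its derived algebra has dimension $ 2 $), and $ L/I $ is of class $ 2 $, the algebra $ L $ itself is of class $ 2 $ or $ 3 $. If $ cl(L) = 2 $, then $ L $ is a generalized Heisenberg algebra of rank $ 3 $ (since $ L^2 = Z(L) $ would force $ \dim L^2 = 3 $), and Lemma \ref{l3.3} already bounds such algebras by $ n \leq 8 $; I would then need to rule out the boundary case $ n = 8 $ directly, most likely by a Hall--Witt/Jacobi computation on the explicit multiplication table as in Proposition \ref{pr3.5}.

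Second, I would treat the class $ 3 $ case. Since $ L/I \cong L_{5,8}\oplus A(2) $ is of class $ 2 $ and $ cl(L) = 3 $, we must have $ \dim \gamma_3(L) = 1 $, and in fact $ \gamma_3(L) \subseteq I $, forcing $ \gamma_3(L) = I $. This is exactly the setup that lets me invoke the formula from \cite[proof of Theorem 1.1]{R}, as already deployed in the proof of Theorem \ref{th2.5}: namely equations \eqref{1} and \eqref{2} relating $ \dim\mathcal{M}(L) $ to $ \dim\mathcal{M}(L/\gamma_3(L)) $, $ \dim\gamma_3(L) $, $ \dim (L/\gamma_2(L))\otimes \gamma_3(L) $ and $ \dim\ker\lambda_3 $. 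Using $ \dim\mathcal{M}(L_{5,8})=9 $ (from \cite[Lemma 2.10]{P}) and $ \dim\mathcal{M}(A(2))=1 $ (from \cite[Lemma 23]{M}), together with \cite[Theorem 1]{B1}, I would compute $ \dim\mathcal{M}(L/\gamma_3(L)) = \dim\mathcal{M}(L_{5,8}\oplus A(2)) = 13 $, and then the bound $ 2 \leq \dim\ker\lambda_3 $ from \eqref{1} would squeeze $ \dim\mathcal{M}(L) $ below the value $ \tfrac12(n-1)(n-2)-6 = 15 $ required by $ s(L)=7 $ at $ n = 8 $, yielding the contradiction.

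The main obstacle I expect is the explicit class $ 2 $ analysis. The inequality-based argument via \cite{R} is clean and essentially mechanical once the multiplier of $ L_{5,8}\oplus A(2) $ is pinned down, and it directly parallels the class $ 3 $ portion of Theorem \ref{th2.5}. By contrast, eliminating the class $ 2 $ possibility may require writing out the general central extension of $ L_{5,8}\oplus A(2) $ by $ I = \langle x_8 \rangle $, introducing parameters $ \alpha_i $ for the undetermined brackets, normalizing them by changes of variables, and then checking that every resulting stem algebra either collapses (contradicting stemness, as in the class $ 2 $ step of Lemma \ref{l2.8}) or has $ \dim\mathcal{M}(L)\neq 15 $ via a Jacobi-identity computation of the type carried out in Proposition \ref{pr3.5}. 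I anticipate that the generalized Heisenberg subcase can be dispatched quickly by Lemma \ref{l3.3} plus its boundary refinement, so the genuine work lies in confirming that no non-Heisenberg class $ 2 $ stem extension survives. In the write-up I would therefore handle class $ 2 $ first (citing Lemma \ref{l3.3} and a short normalization argument) and then present the class $ 3 $ contradiction through \eqref{1} and \eqref{2}.
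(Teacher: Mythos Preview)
Your class~$3$ argument does not yield the contradiction you claim. With $n=8$, $\dim L^2=3$, $\dim\gamma_3(L)=1$ and $\dim\mathcal{M}(L/\gamma_3(L))=\dim\mathcal{M}(L_{5,8}\oplus A(2))=13$, equations \eqref{1} and \eqref{2} give
\[
\dim\mathcal{M}(L)=13-1+5-\dim\ker\lambda_3\leq 17-2=15,
\]
which is exactly the value required by $s(L)=7$; the inequality is not strict. This argument works in Theorem~\ref{th2.5} only because there $s(L)=6$ forces $\dim\mathcal{M}(L)=16$, one more than the bound. For $s(L)=7$ the margin is gone, and this is why the paper abandons the \eqref{1}--\eqref{2} route entirely in Proposition~\ref{pr3.6}. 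Instead it splits the stem class~$3$ situation by $\dim Z(L)$: when $\dim Z(L)=1$ it identifies the $7$-dimensional ideal $I_1$ (landing on $147A$, $147B$, $L_{6,19}(\varepsilon)\oplus A(1)$ or $L_{6,20}\oplus A(1)$), determines $[I_1,A(1)]$ explicitly, and computes $\dim\mathcal{M}(L)=12$; when $\dim Z(L)=2$ it uses the sharper inequality \eqref{3} involving $L/Z_2(L)\otimes L^3$, which does produce a strict contradiction.

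A secondary point: in the class~$2$ case, since $L$ is stem you automatically have $Z(L)=L^2$, so every such $L$ is generalized Heisenberg and there is no separate ``non-Heisenberg class~$2$ stem'' subcase to worry about. Lemma~\ref{l3.3} is vacuous here because $n=8$ is already fixed; the actual work is the explicit multiplier computation (the paper's Case~3), which you correctly anticipate but do not carry out.
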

\begin{proof}
There are ideals $ I_{1}/I $ and $ I_2/I $ such that $ I_{1}/I\cong  L_{5,8}\oplus A(1) $ and $ I_2/I \cong A(1). $ Consider the following cases.\\ 
Case 1. Let $ \dim Z(L)=1. $ Then $ Z(L)=I. $ Since  $ L^{2}/Z(L)=I^{2}_{1}+Z(L)/Z(L), $ we have $ L^{2}= I^{2}_{1}+Z(L)$ and
 by using Lemma \ref{li}, $ L^{2}=I^{2}_1, $ $ Z(L)=I^{3}_1 $ and $ cl(L)=cl(I_1)=3. $ Also, $ I_2=A(1)\oplus Z(L). $
It is clear that  $ [I_1, I_2]\subseteq I_1\cap I_2 $ and  $ I_1\cap I_2 =Z(L), $ hence $  [I_1, I_2]\subseteq Z(L). $ On the other hand, $ L=I_1+I_2=I_1+A(1) $ and so $ [I_1, I_2]=[I_1, A(1)]. $ If  $  [I_1, A(1)]=0, $ then $ L=I_1\oplus A(1) $ and since $ L $ is stem, we have a contradiction. Therefore $  [I_1, A(1)]=Z(L). $ We claim that $ Z(I_{1})\cap I_{1}^{2}=Z(L), $ otherwise  there is $a\in  Z(I_{1})\cap I_{1}^{2} $ such that $ a\notin Z(L). $ Since $ a\in I_{1}^{2}, $ there are $ b,c\in I_1 $ such that $ [b,c]=a. $ Now, let $ A(1)=\langle x_8 \rangle. $ Then $ [x_8, [b,c]]=-([c, [x_8,b]]+[b, [c,x_8]]) $  by using Jacobi identity. Since $ [x_8,b] $ and $ [c,x_8] $ are central, we have $  [x_8, [b,c]]=0$ and so $ a\in Z(L). $ It is a contradiction. If $ I_1 $ is stem, then $ Z(I_1)=I_1^{3} $ and $ \dim Z(I_1)=1. $ By using the  classification of all $ 7 $-dimensional Lie algebra in \cite{Mi} $ I_1 $ is isomorphic to one of the nilpotent Lie algebras $ 147A $ or $ 147B. $
If $ I_1 $ is not  stem, then $ I_1 $  is isomorphic to one of the nilpotent Lie algebras $ L_{6,19}(\varepsilon)\oplus A(1) $ for all $ \varepsilon \in \mathbb{F}^{*}/(\stackrel{*}{\sim})$ or $ L_{6,20}\oplus A(1) $ by using Table \ref{ta5}. Now, we are going to determine $ L. $ It is sufficient to obtain  $ [I_1, A(1)]. $ Assume that $ A(1)=\langle x_8 \rangle  $ and 
\[
 I_1\cong 147A=\langle x_1, \dots, x_7\mid [x_1, x_2]=x_4, [x_1, x_3]=x_5, [x_1, x_6]=x_7,  [x_2, x_5]=x_7,
 \]
 \[
  [x_3, x_4]=x_7\rangle.
\]
 We have
\begin{align*}
&[x_1, x_8]=\alpha_1 x_7, && [x_2, x_8]=\alpha_2 x_7,&& [x_3, x_8]=\alpha_3 x_7, \cr
 & [x_4, x_8]=\alpha_4 x_7, && [x_5, x_8]=\alpha_5 x_7, &&[x_6, x_8]=\alpha_6 x_7.
\end{align*}  
Put $ x'_{8}=x_8-\alpha_1 x_6. $ Then by relabeling
\begin{align*}
&[x_1, x_8]=0, && [x_2, x_8]=\alpha_2 x_7, &&[x_3, x_8]=\alpha_3 x_7, \cr
 & [x_4, x_8]=\alpha_4 x_7, && [x_5, x_8]=\alpha_5 x_7, && [x_6, x_8]=\alpha_6 x_7.
\end{align*}  
A change of variable $ x'_{8}=x_8-\alpha_1 x_5 $ and relabeling  allows that 
\begin{align*}
&[x_1, x_8]=0, && [x_2, x_8]=0, &&[x_3, x_8]=\alpha_3 x_7, \cr
 & [x_4, x_8]=\alpha_4 x_7, && [x_5, x_8]=\alpha_5 x_7, && [x_6, x_8]=\alpha_6 x_7.
\end{align*}  
Finally, by putting $ x'_{8}=x_8-\alpha_3 x_4 $ and relabeling we have
\begin{align*}
&[x_1, x_8]=0, && [x_2, x_8]=0, && [x_3, x_8]=0, \cr
 & [x_4, x_8]=\alpha_4 x_7, &&[x_5, x_8]=\alpha_5 x_7, && [x_6, x_8]=\alpha_6 x_7.
\end{align*}
By using Jacobi identity $  [x_4, x_8]=[[x_1, x_2], x_8]=[x_2,[x_8,x_1]]+[x_1,[x_2,x_8]] $ and since $ [x_8,x_1] $ and $ [x_2,x_8] $ are central, we have  $ [x_4, x_8]=0. $ By using a similar way $  [x_5, x_8]=0. $ 
Therefore 
\begin{align*}
&[x_1, x_8]=0, && [x_2, x_8]=0, && [x_3, x_8]=0, \cr
 & [x_4, x_8]=0, && [x_5, x_8]=0, && [x_6, x_8]=\alpha_6 x_7.
\end{align*}
Since $ [I_1, A(1)]\neq 0, $ we have  $ \alpha_6\neq 0 $  and so  by using $ x'_8=\alpha_6^{-1}x_8 $ and relabeling $ [I_1, A(1)]=\langle [x_6, x_8]= x_7\rangle. $ Therefore
\[
 L\cong \langle x_1, \dots, x_8 \mid [x_1, x_2]=x_4, [x_1, x_3]=x_5, [x_1, x_6]=x_7,  [x_2, x_5]=x_7,
 \]
 \[
  [x_3, x_4]=x_7, [x_6, x_8]= x_7\rangle.
\]
By using the method of Hardy and Stitzinger in \cite{B1},  we can see $ \dim \mathcal{M}(L)=12 $ and so $ s(L)\neq 7. $ Also, if $ I_1 $ is isomorphic to one of the nilpotent Lie algebras $ 147B, $ $ L_{6,19}(\varepsilon)\oplus A(1) $ or $ L_{6,20}, $ then by using a similar way one can see  that $ s(L)\neq 7. $\\
Case 2. Let $ \dim Z(L)=2 $ and $ Z(L)\subset L^{2}. $ Then  $ L/Z(L)\cong H(1)\oplus A(3) $ and so $ \dim Z_2(L)=6. $ Also, by our assumption, we know that $ \dim L=8 $ and $ s(L)=7, $ hence $ \dim \mathcal{M}(L)=15. $ On the other hand, $ L^{3}\subseteq Z(L). $ If $ L^{3}\neq Z(L), $ then $ \dim L^{3}=1 $ and $ L/I^{3}\cong L_{5,8}\oplus A(2). $ When  $ L^{3}=Z(L), $ then $ \dim L^{3}=2 $ and $ L/I^{3}\cong H(1)\oplus A(3). $ By using \cite[Lemmas 2.3 and  2.10]{P}, \cite[Lemma 23]{M} and \cite[Theorem 1]{B1} one can see $ \dim \mathcal{M}(L_{5,8}\oplus A(2))=10 $ and $ \dim \mathcal{M}(H(1)\oplus A(3))=13. $ Now, from \cite[Theorem 2.6]{s3}, we have
\begin{equation}\label{3}
\dim L^{3}+\dim \mathcal{M}(L)\leq \dim \mathcal{M}(L/L^{3})+\dim (L/Z_{2}(L)\otimes L^{3}),
\end{equation}
which is a contradiction.\\
Case 3. Let $ \dim Z(L)=3 $ and $ Z(L)=L^{2}. $ Then $ L  $ and $  I_1 $ are of nilpotency class two.  Since $ L^{2}=I^{2}_1+I, $ we have $ \dim I^{2}_1=2 $ or $ 3. $ If $ \dim I^{2}_1=2, $ then $ \dim I^{2}_1\cap I=0 $ and so $ I\not\subset I^{2}_1. $ On the other hand, $ I\subseteq I_1 $ and $ I\subseteq Z(L), $ thus $ I \subseteq Z(I_1) $ and so $ Z(I_1)\not\subset I^{2}_1. $ Now, by looking at Table \ref{ta2}, $ I_1 $ is isomorphic to $ L_{5,8}\oplus A(2). $ Similar to Case 1, we have 
\[
L\cong \langle x_1, \dots, x_8 \mid [x_1, x_2]=x_4, [x_1, x_3]=x_5, [x_6, x_8]=x_7 \rangle. 
\] 
By using the method of Hardy and Stitzinger in \cite{B1}, one can see that $ \dim \mathcal{M}(L)=14 $ and so $ s(L)\neq 7. $\\
If $ \dim I^{2}_{1}=3,$ then  $ I_1 $ is isomorphic to one of the nilpotent Lie algebras $ 37A, $ $ 37B $ or $ 37C $ by using Table \ref{ta7}. By using a similar way  we may obtain that  $ \mathcal{M}(L)=14. $ Hence  $ s(L)\neq 7. $ Therefore there is no such a Lie algebra.
\end{proof}
\begin{thm}\label{th3.6}
Let $ L $ be an $ n $-dimensional nilpotent Lie algebra with $ s(L)=7 $ and $ \dim L^{2}=3. $ Then $ L $ is isomorphic to one of the nilpotent Lie algebras  $ L_{6,23}\oplus A(1), $ $ L_{6,25}\oplus A(1), $ $ 37B\oplus A(1), $ $ 37C\oplus A(1), $ $ 37D\oplus A(1), $  $ L_{6,26}\oplus A(2), $  $ L_{6,13}, $ $ 257A, $ $ 257C $ or $ 257F. $
\end{thm}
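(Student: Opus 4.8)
The plan is to follow the architecture of Theorem~\ref{th2.5}, dividing on whether $L$ is a stem Lie algebra, i.e.\ on whether $Z(L)\subseteq L^{2}$. In the non-stem case $Z(L)\not\subseteq L^{2}$ one picks a one-dimensional central ideal $I$ with $I\cap L^{2}=0$; choosing a complement $K\supseteq L^{2}$ of $I$ makes $K$ an ideal with $[K,I]=0$, so $L\cong K\oplus A(1)$ with $K\cong L/I=:M$, where $\dim M^{2}=3$ and $\dim M=n-1$. By the direct-sum formula for Schur multipliers \cite[Theorem~1]{B1}, $\dim\mathcal{M}(L)=\dim\mathcal{M}(M)+\dim M^{ab}=\dim\mathcal{M}(M)+(n-4)$, and a one-line computation turns $s(L)=7$ into $s(M)=5$. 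Since $\dim M^{2}=3$, Lemma~\ref{11**} together with Tables~\ref{ta6} and~\ref{ta7} lists precisely the algebras of derived dimension $3$ with $s=5$, namely $L_{6,23},\,L_{6,25},\,37B,\,37C,\,37D$ and $L_{6,26}\oplus A(1)$; adjoining $A(1)$ to each produces the six decomposable algebras of the statement.

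In the stem case $Z(L)\subseteq L^{2}$, take a one-dimensional $I\subseteq Z(L)\cap L^{2}$. Then $\dim(L^{2}\cap I)=1$, $\dim\mathcal{M}(I)=0$ and $\dim(L/I)^{ab}=n-3$, so Theorem~\ref{th1.2} gives $\tfrac12(n-1)(n-2)-5\le \tfrac12(n-2)(n-3)+1-s(L/I)+(n-3)$, i.e.\ $s(L/I)\le 5$. As $\dim(L/I)^{2}=2$, Theorem~\ref{s} forces $L/I$ into a finite explicit list whose largest member $L_{5,8}\oplus A(4)$ has dimension $9$, so $n\le 10$. For $n\le 7$ the conclusion is read directly from Tables~\ref{ta6} and~\ref{ta7}: the stem algebras with $\dim L^{2}=3$ and $s=7$ are $L_{6,13}$ and $257A,\,257C,\,257F$.

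The core of the argument, and the main obstacle, is the elimination of $n=8,9,10$, which I would organize by nilpotency class. If $cl(L)=2$ then $Z(L)=L^{2}$, so $L$ is generalized Heisenberg of rank $3$ and Lemma~\ref{l3.3} forces $n\le 8$; at $n=8$ the only class-two quotients are $L/I\cong L_{5,8}\oplus A(2)$ and $L/I\cong L_{6,22}(\varepsilon)\oplus A(1)$, excluded by Propositions~\ref{pr3.6} and~\ref{pr3.5}. If $cl(L)\ge 3$ then $L^{3}\neq0$ is central and sits inside $L^{2}$, so $\dim L^{3}\in\{1,2\}$, and choosing $I\subseteq L^{3}$ identifies $L/I$ with one of the listed quotients. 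The remaining dimension-seven quotients $L_{4,3}\oplus A(3)$ and $L_{5,5}\oplus A(2)$ (the case $L_{5,8}\oplus A(2)$ being already settled, in all classes, by Proposition~\ref{pr3.6}) would be killed by writing out the extension explicitly and running the Hardy--Stitzinger method to compute $\dim\mathcal{M}(L)$, each time obtaining a value different from $\tfrac12(n-1)(n-2)-6$. For $n=9,10$ the class-two possibility is already barred by Lemma~\ref{l3.3}, while the class-three possibility is dispatched by the multiplier identities of \cite[proof of Theorem~1.1]{R} and \cite[Theorem~2.6]{s3} bounding $\dim\mathcal{M}(L)$ through $\dim\mathcal{M}(L/\gamma_{3}(L))$, $\dim\gamma_{3}(L)$ and $\dim\bigl(L/\gamma_{2}(L)\otimes\gamma_{3}(L)\bigr)$, exactly as in the $n=8,9$ steps of Theorem~\ref{th2.5}. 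Collecting $L_{6,13},257A,257C,257F$ with the six decomposable algebras yields the asserted list; the genuinely laborious point is verifying the multiplier dimension for each indecomposable $n=8$ stem family, including the one-parameter families, since these are where an accidental $s=7$ could hide.
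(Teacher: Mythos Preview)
Your architecture matches the paper's: split on whether $Z(L)\subseteq L^{2}$, bound $s(L/I)$ via Theorem~\ref{th1.2} in the stem case, and eliminate large $n$ case by case. Your non-stem argument is in fact cleaner than the paper's: the paper mimics Theorem~\ref{th2.5} and bounds $\dim\mathcal{M}(L/I)$ from above via Theorem~\ref{th1.1} before narrowing down, whereas your direct-sum identity pins $s(M)=5$ exactly. One correction there: Tables~\ref{ta6} and~\ref{ta7} only reach dimension $7$, so to conclude that the list of algebras $M$ with $s(M)=5$ and $\dim M^{2}=3$ is exhaustive you must invoke Theorem~\ref{s}(vi), not merely the tables.

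There is, however, a genuine gap in your stem elimination at $n=8$. You note that Proposition~\ref{pr3.5} kills $L/I\cong L_{6,22}(\varepsilon)\oplus A(1)$ when $cl(L)=2$, and then in the $cl(L)\ge 3$ paragraph you list only $L_{4,3}\oplus A(3)$ and $L_{5,5}\oplus A(2)$ as the remaining dimension-$7$ quotients. But when $cl(L)=3$ with $\dim L^{3}=1$, taking $I=L^{3}$ yields a class-two quotient which may well be $L_{6,22}(\varepsilon)\oplus A(1)$, and Proposition~\ref{pr3.5} does not apply because it assumes $L$ itself is of class two. The paper handles this (its Case~3) via Rai's identity \eqref{2} and the bound \eqref{1}: from $\dim\mathcal{M}(L_{6,22}(\varepsilon)\oplus A(1))=12$ one obtains $\dim\mathcal{M}(L)\le 12-1+5-2=14<15$, a contradiction.

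For the quotients $L_{4,3}\oplus A(3)$ and $L_{5,5}\oplus A(2)$ your Hardy--Stitzinger plan would work in principle, but it is needlessly laborious: one must parametrize all central extensions and also separate $cl(L)=3$ from $cl(L)=4$. The paper instead dispatches these (its Case~5) with the same inequalities: \eqref{1}--\eqref{2} when $\dim L^{3}=1$, inequality \eqref{3} when $cl(L)=3$ and $\dim L^{3}=2$, and the $\lambda_{4}$-version of Rai's identity when $cl(L)=4$. Each subcase becomes a two-line dimension count rather than an extension-by-extension computation.
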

\begin{proof}
If $ Z(L)\not\subseteq L^{2}, $ then there is a central one dimensional ideal $ I $ of $ L $ such that $ I\cap L^{2}=0. $ By using a similar method in the proof of Theorem \ref{th2.5}, we can see  $ L $ is isomorphic to $ L_{6,23}\oplus A(1), $ $ L_{6,25}\oplus A(1), $ $ 37B\oplus A(1), $ $ 37C\oplus A(1), $ $ 37D\oplus A(1) $ or $ L_{6,26}\oplus A(2). $ Also, by using Tables \ref{ta6}, \ref{ta7}, \cite[Lemma 23]{M} and \cite[Theorem 1]{B1} we have $ s(L_{6,23}\oplus A(1))=s( L_{6,25}\oplus A(1))= s( 37B\oplus A(1)) =s(37C\oplus A(1)) =s( 37D\oplus A(1)) = s(L_{6,26}\oplus A(2))=7. $\\
If $ Z(L)\subseteq L^{2}, $ then there is a one dimensional ideal $ I $ of $ L $ such that $ I\subseteq Z(L)\cap L^{2}. $ Hence by using we have
\begin{align*}
\frac{1}{2}(n-1)(n-2)-5\leq \frac{1}{2}(n-2)(n-3)+1-s(L/I)+n-3
\end{align*}
and so $ s(L/I)\leq 5. $ Since $ \dim (L/I)^{2}=2, $ by using  Theorem \ref{s},  we have one of the following cases.\\
Case 1. If $ L/I $ is isomorphic to one of the nilpotent Lie algebras $ L_{5,8}, $ $ L_{4,3}, $ $ L_{5,8}\oplus A(1), $ $ L_{4,3}\oplus A(1), $ $ L_{5,5}, $ $ L_{6,22}(\varepsilon)$ for all $ \varepsilon \in \mathbb{F}/(\stackrel{*}{\sim}) $ or $ L_{4,3}\oplus A(2), $ 
then $ n\leq 7. $ Hence $ L$ is isomorphic to one of the nilpotent Lie algebras $L_{6,13}, $ $ 257A, $ $ 257C $ or $ 257F $  by looking at Tables \ref{ta6} and \ref{ta7}. \\
Case 2. If $ L/I\cong L_{5,8}\oplus A(2), $ then  there is no Lie algebra $ L $ with $ s(L)=7 $ and $ \dim L^{2}=3 $ by using  Proposition \ref{pr3.6}.\\
Case 3. Let $L/I\cong L_{6,22}(\varepsilon)\oplus A(1)$ for all $ \varepsilon \in \mathbb{F}/(\stackrel{*}{\sim}). $ If $ L $ is of nilpotency class two, then  there is no Lie algebra with $ s(L)=7 $ and $ \dim L^{2}=3 $ by using Theorem \ref{pr3.5}. If $ L $ is of nilpotency class $ 3, $ then 
\begin{align*}
15=\dim \mathcal{M}(L) & \leq \dim \mathcal{M}(L/\gamma_{3}(L))-\dim \gamma_{3}(L)+\dim L/\gamma_{2}(L)\otimes \gamma_{3}(L)-2\cr
& \leq 14.
\end{align*}
by using \eqref{1} and \eqref{2}. It is a contradiction.
\\ 
Case 4. Let $ L/I $  is isomorphic to one of the nilpotent Lie algebras $ L_{5,8}\oplus A(3), $  $ L_{5,8}\oplus A(4)  $  or $ L_{6,22}(\varepsilon)\oplus A(2)$ for all $ \varepsilon \in \mathbb{F}/(\stackrel{*}{\sim}). $ If $ L $ is of nilpotency class two,    then there is no Lie algebra with $ s(L)=7 $ and $ \dim L^{2}=3 $ by using Lemma \ref{l3.3}. If $ L $ is of nilpotency class $ 3, $  then  we have a contradiction  similar to  the Case $ 3. $\\
Case 5. Let $ L/I $  is isomorphic to $ L_{4,3}\oplus A(3), $ $  L_{5,5}\oplus A(2). $ If $ L/I\cong L_{4,3}\oplus A(3), $  then $ \dim \mathcal{M}(L/I)=11 $ by using \cite[Lemma 2.9]{P}, \cite[Lemma 23]{M} and \cite[Theorem 1]{B1}. When $ cl(L)=3 $ and $ \dim \gamma_3(L)=1, $ then by using the inequalities  \eqref{1} and \eqref{2} we have a contradiction. In the case $ cl(L)=3 $ and $ \dim \gamma_3(L)=2, $ by using  the inequality \eqref{3},  we get a contradiction. If $ cl(L)=4, $ then  $ I= \gamma_4(L)$  and  $ \dim \gamma_4(L)=1. $ Since  $ \dim \mathcal{M}(L)=15 $ and  $ \dim \mathcal{M} (L/ \gamma_3(L))=11, $ by using \cite[proof of Theorem 1.1]{R},  we have 
 \begin{equation*}
1=n-m-c \leq\dim \ker \lambda_{4}, ~~ \text{and} 
\end{equation*}
\begin{equation*}
\dim \mathcal{M}(L)=\dim \mathcal{M}(L/\gamma_{4}(L))-\dim \gamma_{4}(L)+\dim L/\gamma_{2}(L)\otimes \gamma_{4}(L)-\dim \ker \lambda_{4}
\end{equation*}
which a  contradiction. If $ L/I\cong L_{5,5}\oplus A(2), $ by using a similar way we get a contradiction. 
\end{proof}
\begin{prop}\label{pr3.7}
There is no  stem $ 8 $-dimensional nilpotent Lie algebra $ L $ of nilpotency class $ 2 $ with derived subalgebra of dimension $ 4 $ and   $ s(L)=7 $ such that $ L/I\cong 37A $ and $ I\subseteq Z(L). $
\end{prop}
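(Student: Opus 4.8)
The plan is to realize $L$ as a central extension of $37A$ by the line $I=\langle x_8\rangle$, pin it down up to isomorphism, and then compute $\dim\mathcal{M}(L)$ directly and check it cannot equal the value forced by $s(L)=7$. Since $n=8$ gives $s(L)=\tfrac12(n-1)(n-2)+1-\dim\mathcal{M}(L)=22-\dim\mathcal{M}(L)$, the hypothesis $s(L)=7$ is equivalent to $\dim\mathcal{M}(L)=15$, and the goal is to rule this out. As $L$ is of class two with $L/I\cong 37A$, I would lift the three defining products of $37A$ and write every bracket of $L$ as $[x_i,x_j]=(\text{value in }37A)+\alpha_{ij}x_8$, with $x_5,x_6,x_7,x_8$ all central. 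Replacing $x_5,x_6,x_7$ by $x_5+\alpha_{12}x_8$, $x_6+\alpha_{23}x_8$, $x_7+\alpha_{24}x_8$ clears the $x_8$-coefficients from $[x_1,x_2]$, $[x_2,x_3]$, $[x_2,x_4]$, so the only surviving new products are
\[ [x_1,x_3]=a_{13}x_8,\quad [x_1,x_4]=a_{14}x_8,\quad [x_3,x_4]=a_{34}x_8. \]

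Next I would exploit $\dim L^2=4$: since $(37A)^2=\langle x_5,x_6,x_7\rangle$ already accounts for three dimensions, we must have $x_8\in L^2$, so at least one of $a_{13},a_{14},a_{34}$ is nonzero. These three scalars form exactly the $x_8$-component of the alternating form of the extension, now supported on the $3$-dimensional space $\langle \bar x_1,\bar x_3,\bar x_4\rangle$; being nonzero it has rank $2$. The key reduction is that any $T\in GL(\langle x_1,x_3,x_4\rangle)$ (fixing $x_2$, with the matching action on $\langle x_5,x_6,x_7\rangle$) extends to an automorphism of $37A$, hence lifts to a change of variables of $L$, and under this $GL_3$-action every nonzero rank-$2$ alternating form on a $3$-space is equivalent to one normal form. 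Thus, after relabelling, $L$ is isomorphic to the single algebra
\[ L=\langle x_1,\dots,x_8 \mid [x_1,x_2]=x_5,\ [x_2,x_3]=x_6,\ [x_2,x_4]=x_7,\ [x_3,x_4]=x_8\rangle, \]
which is indeed $8$-dimensional, stem, of class two, with $\dim L^2=4$ and $L/\langle x_8\rangle\cong 37A$.

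Finally I would compute $\mathcal{M}(L)$ by the method of Hardy and Stitzinger, introducing a central generator $s_k$ for each of the $\binom{8}{2}=28$ products, absorbing $s_1,s_8,s_9,s_{14}$ by redefining $x_5,x_6,x_7,x_8$, and then feeding the four defining brackets through the Jacobi identities $[[x_i,x_j],x_k]=[x_i,[x_j,x_k]]-[x_j,[x_i,x_k]]$. This forces $s_7=s_{23}=\dots=s_{28}=0$, the dependencies $s_{15}=-s_5$ and $s_{19}=-s_6$, and the single relation $s_{17}=s_{13}+s_{20}$ among the ``fourth-slot'' generators, leaving $14$ independent generators; hence $\dim\mathcal{M}(L)=14$ and $s(L)=8\neq 7$. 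The main obstacle is precisely this last step: one must run the Jacobi bookkeeping completely and verify that no further relation collapses the surviving $s_k$, and one must also confirm that the $GL_3$-reduction is genuinely exhaustive — that the rank-$2$ case is the only possibility and that lifting the automorphism reintroduces no uncleared $x_8$-terms — so that the computation for the single normal form settles all admissible $L$.
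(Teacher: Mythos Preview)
Your proposal is correct and follows the same overall strategy as the paper: realise $L$ as a class-two central extension of $37A$ by $\langle x_8\rangle$, clear the $x_8$-coefficients from the three defining brackets, observe that the remaining freedom is the triple $(a_{13},a_{14},a_{34})$ with at least one entry nonzero, and then compute the multiplier via the Hardy--Stitzinger procedure to obtain $\dim\mathcal{M}(L)=14$, hence $s(L)=8\neq 7$.

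The one genuine difference is your normalisation step. The paper does not invoke the $GL_3$-action on $\langle x_1,x_3,x_4\rangle$; instead it assumes $\alpha_2\neq 0$ (your $a_{13}$), rescales so that $[x_1,x_3]=x_8$, keeps the two remaining coefficients as free parameters, and carries out the multiplier computation for this entire two-parameter family, obtaining $14$ in each case; it then asserts that the residual cases $\alpha_2=0$ with $\alpha_3\neq 0$ or $\alpha_{12}\neq 0$ are handled ``by a similar way''. Your observation that the full automorphism group of $37A$ contains $GL(\langle x_1,x_3,x_4\rangle)$ (acting compatibly on $\langle x_5,x_6,x_7\rangle$), and that a nonzero alternating form on a $3$-space has a unique $GL_3$-orbit, collapses all of this to a single explicit algebra and a single multiplier computation. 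This is cleaner and eliminates the paper's appeal to unwritten ``similar'' cases; the price is that you must justify (as you note) that the lifted $GL_3$-change does not reintroduce $x_8$-terms in the brackets with $x_2$, which is immediate since those brackets were already free of $x_8$ and transform linearly among themselves.
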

\begin{proof}
Let $ I=\langle x_8 \rangle $ and $ 37A=\langle x_1, \dots, x_6, x_7 \mid [x_1, x_2]=x_5,  [x_2, x_3]=x_6, [x_2, x_4]=x_7 \rangle.$ Then
\begin{align*}
&[x_{1},x_{2}]=x_{5}+\alpha _{1}x_{8},&&  [x_{1},x_{3}]=\alpha _{2}x_{8}, &   & [x_{1},x_{4}]=\alpha _{3}x_{8},   \cr
&[x_{1},x_{5}]=\alpha _{4}x_{8},&& [x_{1},x_{6}]=\alpha _{5}x_{8},&& [x_{1},x_{7}]=\alpha _{6}x_{8},\cr
& [x_{2},x_{3}]=x_6+\alpha _{7}x_{8},    &  & [x_{2},x_{4}]=x_7+\alpha _{8}x_{8},&& [x_{2},x_{5}]=\alpha _{9}x_{8},\cr
  &[x_{2},x_{6}]=\alpha _{10}x_{8}, && [x_{2},x_{7}]=\alpha _{11}x_{8}, &&[x_{3},x_{4}]=\alpha _{12}x_{8},\cr
 &[x_{3},x_{5}]=\alpha _{13}x_{8}, & &  [x_{3},x_{6}]=\alpha _{14}x_{8}, && [x_{3},x_{7}]=\alpha _{15}x_{8}, \cr
&[x_{4},x_{5}]=\alpha _{16}x_{8}, & &  [x_{4},x_{6}]=\alpha _{17}x_{8}, && [x_{4},x_{7}]=\alpha _{18}x_{8},\cr
&[x_{5},x_{6}]=\alpha _{19}x_{8}, & &  [x_{5},x_{7}]=\alpha _{20}x_{8}, && [x_{6},x_{7}]=\alpha _{21}x_{8}.
\end{align*}
Since $ L $ is of nilpotency class two, we have
 $ \alpha_4=\alpha_5=\alpha_6=\alpha_9=\alpha_{10}=\alpha_{11}=\alpha_{13}=\alpha_{14}=\alpha_{15}=\alpha_{16}=
\alpha_{17}=\alpha_{18}=\alpha_{19}=\alpha_{20}=\alpha_{21}=0. $ By using a change of variables  $ x'_5=x_{5}+\alpha _{1}x_{8}, $ $ x'_6=x_6+\alpha _{7}x_{8} $ and $ x'_7= x_7+\alpha _{8}x_{8},$ we have $ \alpha_1=\alpha_7=\alpha_8=0. $
Since  $ \dim L^{2}=4, $  at least one of the coefficients $ \alpha _{2}, $ $ \alpha _{3}$ or $ \alpha _{12} $ is non-zero. \\
Let $ \alpha_2\neq 0.$ Then  by using $ x'_8=\alpha _{2}x_{8} $ and relabeling we have
\[
L=\langle x_1, \dots, x_7, x_8 \mid   [x_{1},x_{2}]=x_{5}, [x_{2},x_{3}]=x_6,  [x_{2},x_{4}]=x_7, [x_{1},x_{3}]=x_{8}, [x_{1},x_{4}]=\alpha _{3}x_{8}, \]
 \[[x_{3},x_{4}]=\alpha _{12}x_{8} \rangle.
\]
In the following, we are going to obtain  the Schur multiplier of $ L, $ by using the method of Hardy and Stitzinger in \cite{B1}. Start with
\begin{align*}
  &[x_{1},x_{2}]=x_{5}+s_{1},& & [x_{1},x_{3}]= x_{8}+s_{2}, &  & [x_{1},x_{4}]=\alpha _3 x_{8}+s_{3},   \cr
 &[x_{1},x_{5}]=s_{4},& &[x_{1},x_{6}]=s_{5},& &    [x_{1},x_{7}]=s_{6},  \cr
  &[x_{1},x_{8}]=s_{7}, &&[x_{2},x_{3}]=x_{6}+s_{8},& &[x_{2},x_{4}]=x_7+s_{9}, \cr
 &[x_{2},x_{5}]=s_{10}, &&[x_{2},x_{6}]=s_{11},&&[x_{2}, x_{7}]=s_{12},\cr
& [x_{2},x_{8}]=s_{13}, &&[x_{3},x_{4}]=\alpha_{6}x_8+s_{14}, &&[x_{3},x_{5}]=s_{15},\cr
&[x_{3}, x_{6}]=s_{16}, && [x_{3},x_{7}]=s_{17},&& [x_{3},x_{8}]=s_{18},\cr
 &[x_{4}, x_{5}]=s_{19},&& [x_{4}, x_{6}]=s_{20}, && [x_{4},x_{7}]=s_{21}, \cr
  &[x_{4},x_{8}]=s_{22},&&[x_{5},x_{6}]=s_{23},&& [x_{5}, x_{7}]=s_{24},\cr
  &[x_{5},x_{8}]=s_{25}, && [x_{6},x_{7}]=s_{26}, && [x_{6},x_{8}]=s_{27},\cr
  & [x_{7},x_{8}]=s_{28},
\end{align*}
where $  s_{1},..., s_{28} $ generate $ \mathcal{M}(L) $.
By using the Jocobi identity, we have
\begin{align*}
&s_{5}=[x_{1}, x_{6}]=[x_{1},[x_{2},x_{3}]]=-([x_{3},[x_{1},x_{2}]]+[x_{2},[x_{3},x_{1}]])=-s_{15}+s_{13},\cr
&s_6=[x_{1}, x_{7}]=[x_{1},[x_{2},x_{4}]]=-([x_{4},[x_{1},x_{2}]]+[x_{2},[x_{4},x_{1}]])=-s_{19}+\alpha_{3}s_{13},\cr
&s_{15}=[x_{3}, x_{5}]=[x_{3},[x_{1},x_{2}]]=-([x_{2},[x_{3},x_{1}]]+[x_{1},[x_{2},x_{3}]])= s_{13}-s_5,\cr
&s_{17}=[x_{3}, x_{7}]=[x_{3},[x_{2},x_{4}]]=-([x_{4},[x_{3},x_{2}]]+[x_{2},[x_{4},x_{3}]])=s_{20}+\alpha_6 s_{13},\cr
&s_{19}=[x_{4}, x_{5}]=[x_{4},[x_{1},x_{2}]]=-([x_{2},[x_{4},x_{1}]]+[x_{1},[x_{2},x_{4}]])=\alpha_3 s_{13}-s_{6},\cr
&s_{22}=[x_{4}, x_{8}]=[x_{4},[x_{1},x_{3}]]=-([x_{3},[x_{4},x_{1}]]+[x_{1},[x_{3},x_{4}]])=\alpha_3 s_{18}-\alpha_6 s_7,\cr
&s_{23}=[x_{5}, x_{6}]=[x_{5},[x_{2},x_{3}]]=-([x_{3},[x_{5},x_{2}]]+[x_{2},[x_{3},x_{5}]])=0,\cr
&s_{24}=[x_{5}, x_{7}]=[x_{5},[x_{2},x_{4}]]=-([x_{4},[x_{5},x_{2}]]+[x_{2},[x_{4},x_{5}]])=0,\cr
&s_{25}=[x_{5}, x_{8}]=-([x_{8},[x_{1},x_{2}]])=[x_{2},[x_{8},x_{1}]]+[x_{1},[x_{2},x_{8}]]=0,\cr
&s_{26}=[x_{6}, x_{7}]=[x_{6},[x_{2},x_{4}]]=-([x_{4},[x_{6},x_{2}]]+[x_{2},[x_{4},x_{6}]])=0,\cr
&s_{27}=[x_{6}, x_{8}]=-([x_{8},[x_{2},x_{3}]])=[x_{3},[x_{8},x_{2}]]+[x_{2},[x_{3},x_{8}]]=0,\cr
&s_{28}=[x_{7}, x_{8}]=-([x_{8},[x_{2},x_{4}]])=[x_{4},[x_{8},x_{2}]]+[x_{2},[x_{4},x_{8}]]=0,\cr
\end{align*}
Put $x^{\prime}_{5}=x_{5}+s_{1},$  $x^{\prime}_{6}=x_{6}+s_{8},$   $x^{\prime}_{7}=x_{7}+s_{9}$  and $ x'_8=x_{8}+s_{2}. $ A change of variables allows that $ s_{1}=s_2=s_{8}=s_{9}=0. $ Thus 
\[ \mathcal{M}(L)=\langle s_{3}, s_{4},  s_{5}, s_{6},s_{7}, s_{10},  s_{11}, s_{12},s_{13}, s_{14},  s_{16}, s_{17},s_{18},s_{21} \rangle
\]
 and  $\dim \mathcal{M}(L )=14$ and so $ s(L)=8. $\\
In the case $ \alpha_3 $ or $ \alpha_{12} $ is non-zero, we have $ s(L)=8 $ by using a similar way. Therefore there is no  such a nilpotent Lie algebra, and the proof is completed.
\end{proof}
\begin{thm}\label{th3.8}
Let $ L $ an  $n$-dimensional nilpotent Lie algebra with $ s(L)=7 $ and $ \dim L^{2}=4. $ Then $ L $ is isomorphic to $ L_{6,21}(\varepsilon) $ for all $ \varepsilon \in \mathbb{F}^{*}/(\stackrel{*}{\sim}). $
\end{thm}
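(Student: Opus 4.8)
The plan is to argue by cases according to whether $Z(L)\subseteq L^{2}$, exactly as in the proofs of Theorems~\ref{th2.5}, \ref{th2.6} and \ref{th3.6}. Throughout, $s(L)=7$ means $\dim\mathcal{M}(L)=\tfrac12(n-1)(n-2)-6$, and each case is driven by Theorem~\ref{th1.2} applied to a carefully chosen one-dimensional central ideal $I$, after which the quotient $L/I$ is pinned down by Theorem~\ref{s} and the Tables.

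First I would treat $Z(L)\not\subseteq L^{2}$, where there is a one-dimensional central ideal $I$ with $I\cap L^{2}=0$, so that $\dim(L/I)^{2}=4$ and $\dim\mathcal{M}(I)=0$. Since $\dim((L/I)^{ab}\otimes I)=n-5$, Theorem~\ref{th1.2} together with the identity $\tfrac12(n-1)(n-2)-\tfrac12(n-2)(n-3)=n-2$ collapses to $s(L/I)\le 4$. As Theorem~\ref{s} contains no algebra with $\dim L^{2}=4$ and $s\le4$, this case is vacuous.

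The content lies in the stem case $Z(L)\subseteq L^{2}$. Here I choose a one-dimensional ideal $I\subseteq Z(L)\cap L^{2}$, so $\dim(L/I)^{2}=3$ and $\dim(L^{2}\cap I)=1$; now $\dim((L/I)^{ab}\otimes I)=n-4$ and the same use of Theorem~\ref{th1.2} again gives $s(L/I)\le4$. Reading the algebras with $\dim L^{2}=3$ and $s\le4$ off Tables~\ref{ta6} and \ref{ta7}, $L/I$ must be one of $L_{5,6},L_{5,7},L_{5,9}$ (forcing $n=6$), $L_{6,26}$ (forcing $n=7$), or $37A$ (forcing $n=8$). When $n=6$, Table~\ref{ta81} shows that the unique six-dimensional algebra with $\dim L^{2}=4$ and $s=7$ is $L_{6,21}(\varepsilon)$, the claimed conclusion; as a consistency check one sees that $Z(L_{6,21}(\varepsilon))=\langle x_{6}\rangle\subseteq L^{2}$ and $L_{6,21}(\varepsilon)/Z(L_{6,21}(\varepsilon))\cong L_{5,9}$, so this quotient genuinely occurs. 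The case $L/I\cong L_{6,26}$ is excluded at once by Lemma~\ref{l2.8}, since $L$ is then a stem seven-dimensional algebra with $s(L)=7$.

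The remaining and hardest case is $L/I\cong 37A$ with $n=8$, which I would settle by nilpotency class. Because $37A$ has class $2$ and $\dim I=1$, the algebra $L$ has class $2$ or $3$. If $cl(L)=2$, then $L$ is a stem eight-dimensional class-two algebra with $\dim L^{2}=4$ and $L/I\cong 37A$, which is impossible by Proposition~\ref{pr3.7}. If $cl(L)=3$, then $I=\gamma_{3}(L)$ is one-dimensional, $L/\gamma_{3}(L)\cong 37A$ with $\dim\mathcal{M}(37A)=12$ and $\dim(L/\gamma_{2}(L)\otimes\gamma_{3}(L))=4$; substituting $\dim\mathcal{M}(L)=15$ into \eqref{2} forces $\dim\ker\lambda_{3}=0$, which contradicts $1=n-\dim L^{2}-cl(L)\le\dim\ker\lambda_{3}$ coming from \eqref{1}. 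Hence $L\cong L_{6,21}(\varepsilon)$. I expect this $37A$ case to be the main obstacle, since its class-two and class-three branches require genuinely different machinery, namely the explicit Hardy--Stitzinger multiplier computation behind Proposition~\ref{pr3.7} versus the Rai-type inequalities \eqref{1} and \eqref{2}, whereas every other case is dispatched directly by the two numerical applications of Theorem~\ref{th1.2}.
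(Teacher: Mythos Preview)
Your proposal is correct and follows essentially the same route as the paper: split on whether $Z(L)\subseteq L^{2}$, use Theorem~\ref{th1.2} to bound $s(L/I)\le 4$, read off the possible quotients from Theorem~\ref{s}, and then eliminate $n=7$ via Lemma~\ref{l2.8} and $n=8$ (the $37A$ case) via Proposition~\ref{pr3.7} for class two and the Rai inequalities \eqref{1}, \eqref{2} for class three. Your handling of the class-three branch is in fact slightly cleaner than the paper's, since you correctly compute $n-m-c=8-4-3=1$ (the paper writes $-2$ there, though the contradiction still goes through).
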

\begin{proof}
If $ Z(L)\not\subseteq L^{2}, $ then there is a central one dimensional ideal $ I $ of $ L $ such that $ I\cap L^{2}=0. $ Similar to the proof of Theorem \ref{th2.6}, we have $ s(L/I)\leq 4. $ Since   $ \dim (L/I)^{2}=4, $ we have  there is no such a nilpotent Lie algebra $ L/I $ with $ s(L/I)\leq 4$ by using Theorem \ref{s}.\\
Let $ Z(L)\subseteq L^{2}. $ Then   there exist a  one dimension ideal $ I $ of $ L $ such that $ I\subseteq Z(L)\cap L^{2}.$ By using Theorem \ref{th1.2}, we have
\begin{align*}
\frac{1}{2}(n-1)(n-2)-5\leq \frac{1}{2}(n-2)(n-3)+1-s(L/I)+n-4
\end{align*}
and so $ s(L/I)\leq 4. $ Since $ s(L/I)\leq 4 $ and $ \dim(L/I)^{2}=3, $ we have $ L/I$ is isomorphic to $ L_{6,26,} $ $ L_{5,6}, $ $ L_{5,7}, $  $ L_{5,9} $ or $ 37A. $ If $ L/I $ is isomorphic to $ L_{6,26,} $ $ L_{5,6}, $ $ L_{5,7} $ or  $ L_{5,9}, $ then $ n=6 $ or $ 7. $
If $ n=6, $ then $ L\cong L_{6,21}(\varepsilon) $ for all $ \varepsilon \in \mathbb{F}^{*}/(\stackrel{*}{\sim}) $ by using Table \ref{ta81}.
Let $ L $ a $ 7 $-dimensional Lie algebra. Then in this case  there is no  nilpotent Lie algebra with $ s(L)=7 $ and $ \dim L^{2}=4 $ by using  Lemma \ref{l2.8}.\\
Let $ L/I\cong 37A. $ If $ L $ is of nilpotency class two, then  by using Proposition \ref{pr3.7} there is no such a Lie algebra with $ \dim L^{2}=4 $ and $ s(L)=7. $\\
If $ L $ is of nilpotency class $ 3 $, then by using \eqref{1} and \eqref{2}, we have
\begin{align*}
15=\dim \mathcal{M}(L) & \leq \dim \mathcal{M}(L/\gamma_{3}(L))-\dim \gamma_{3}(L)+\dim (L/\gamma_{2}(L)) \otimes \gamma_{3}(L)-2\cr
& \leq 14,
\end{align*}
 which is contradiction. The result follows. 
\end{proof}
We are ready to have the Main Theorem of this section.
\begin{thm}
Let $ L $ be an $ n $-dimensional nilpotent Lie algebra with $ s(L)=7. $ Then $ L $ is isomorphic to one of the nilpotent Lie algebras 
$ L_{5,8}\oplus A(6) $ $ L_{4,3}\oplus A(5), $ $ L_{5,5}\oplus A(4), $ $ L_{6,22}(\varepsilon) \oplus A(4) $ for all $ \varepsilon \in \mathbb{F}/(\stackrel{*}{\sim}), $  $ 27B, $  $ L_{6,10}\oplus A(2),$    $ 27A\oplus A(1), $ $ 157\oplus A(1), $   $ L_{6,10}\dotplus H(1),  $ $ H(1)\oplus H(2), $  $ S_1, $ $ L_{6,23}\oplus A(1), $ $ L_{6,25}\oplus A(1), $ $ 37B\oplus A(1), $ $ 37C\oplus A(1), $ $ 37D\oplus A(1), $  $ L_{6,26}\oplus A(2), $  $ L_{6,13}, $ $ 257A, $ $ 257C, $ $ 257F $ or $ L_{6,21}(\varepsilon) $ for all $ \varepsilon \in \mathbb{F}^{*}/(\stackrel{*}{\sim}). $
\end{thm}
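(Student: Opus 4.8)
The plan is to prove the Main Theorem by stratifying according to the dimension $m=\dim L^{2}$ of the derived subalgebra, exactly mirroring the organization of the section. Since $L$ is non-abelian we have $m\geq 1$, so the ranges $m=1$, $m=2$, $m=3$, $m=4$ and $m\geq 5$ are mutually exclusive and exhaustive. As $\dim L^{2}$ is an isomorphism invariant, the lists produced in distinct ranges cannot overlap, and the theorem will follow by concatenating the classification obtained in each range into one list.

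First I would eliminate the two extreme ranges at once by invoking Proposition \ref{pr3.1}, which states that there is no $n$-dimensional nilpotent Lie algebra with $s(L)=7$ when $\dim L^{2}=1$ or $\dim L^{2}\geq 5$. The mechanism behind this is the upper bound of Theorem \ref{th1.1}: writing the hypothesis $s(L)=7$ as $\dim\mathcal{M}(L)=\tfrac{1}{2}(n-1)(n-2)-6$ and comparing it with $\tfrac{1}{2}(n+m-2)(n-m-1)+1$ shows that a large $m$ is incompatible with the bound, while the case $m=1$ is controlled through Lemma \ref{l1.2}. This step requires nothing beyond quoting Proposition \ref{pr3.1}.

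The substance of the classification for the remaining ranges is already packaged in the three case theorems. For $m=2$ I would quote Theorem \ref{th3.4}, contributing $L_{5,8}\oplus A(6)$, $L_{4,3}\oplus A(5)$, $L_{5,5}\oplus A(4)$, $L_{6,22}(\varepsilon)\oplus A(4)$, $27B$, $L_{6,10}\oplus A(2)$, $27A\oplus A(1)$, $157\oplus A(1)$, $L_{6,10}\dotplus H(1)$, $H(1)\oplus H(2)$ and $S_{1}$. For $m=3$, Theorem \ref{th3.6} contributes $L_{6,23}\oplus A(1)$, $L_{6,25}\oplus A(1)$, $37B\oplus A(1)$, $37C\oplus A(1)$, $37D\oplus A(1)$, $L_{6,26}\oplus A(2)$, $L_{6,13}$, $257A$, $257C$ and $257F$. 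For $m=4$, Theorem \ref{th3.8} contributes only $L_{6,21}(\varepsilon)$. Taking the union of these three lists reproduces the statement verbatim.

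The only genuinely delicate point is conceptual rather than computational: one must be satisfied that each case theorem is truly exhaustive within its range. The hard work is concentrated in those theorems --- the capable versus non-capable dichotomy combined with Lemma \ref{l1.1}, the central-quotient estimates of Theorems \ref{th1.1} and \ref{th1.2} applied to $L/I$, and the elimination of the stem candidates carried out in Propositions \ref{pr3.5}, \ref{pr3.6} and \ref{pr3.7} via explicit Hardy--Stitzinger computations of $\dim\mathcal{M}(L)$. Once those ingredients are granted, the Main Theorem itself is merely an assembly of the four cases, so I expect no additional obstacle at this final stage.
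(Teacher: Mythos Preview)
Your proposal is correct and follows exactly the paper's own approach: the paper's proof of this theorem is the single sentence ``By using Proposition \ref{pr3.1}, Theorems \ref{th3.4}, \ref{th3.6} and \ref{th3.8}, we can obtain the result,'' which is precisely the stratification by $\dim L^{2}$ you describe. Your additional remarks about why the cases are exhaustive and how the case theorems work internally are accurate but go beyond what the paper records at this final assembly step.
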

\begin{proof}
By using Proposition \ref{pr3.1}, Theorems \ref{th3.4}, \ref{th3.6} and \ref{th3.8}, we can obtain the result. 
\end{proof}

\end{document}